\newcommand{\Rad}{\text{Rad}}
\newcommand{\Res}{\text{Quo}}
\newcommand{\im}{\text{im }}
\newcommand{\epz}{\varepsilon}
\newcommand{\eps}{\epsilon}
\newcommand{\phz}{\varphi}
\newcommand{\io}{\iota}
\newcommand{\A}{{\mathcal A}}
\newcommand{\Aa}{{\mathcal A}_a}
\newcommand{\Sa}{{\mathcal S}}
\newcommand{\Sal}{{\mathcal S}_{\text{a}}}
\newcommand{\Alg}{{\mathcal Alg}}
\newcommand{\Pp}{{\mathcal P}}
\newcommand{\iso}{\cong}
\newcommand{\id}{\text{id}}
\newcommand{\Mat}{\text{Mat}}
\newcommand{\dm}{/\!/}
\newcommand{\G}{\mathcal G}
\newcommand{\Hom}{\text{Hom}}
\newcommand{\Cx}{\mathbb C}
\newcommand{\tal}{\sim_{\text{alg}}}
\begin{document}

\begin{center}
\vskip 1cm{\LARGE\bf
Functors from Assocation Schemes}

\vskip 1cm
\large
Christopher French\\
Department of Mathematics and Statistics\\
Grinnell College\\
Grinnell, IA 50112\\
USA\\
{\tt frenchc@grinnell.edu}\\
\end{center}

\vskip .2in

\begin{abstract}
We construct a wide subcategory of the category of finite association schemes with a collection of
desirable properties.  Our subcategory has a first isomorphism theorem analogous to that
of groups.  Also, standard constructions taking schemes to groups (thin radicals and thin quotients)
or algebras (adjacency algebras) become functorial when restricted to our category.  We use our
category to give a more conceptual account for a result of Hanaki concerning products of characters
of association schemes; i.e. we show that the virtual representations of an association scheme
form a module over the representation ring of the thin quotient of the association scheme.

\end{abstract}

\newtheorem{thm}{Theorem}[section]
\newtheorem{cor}[thm]{Corollary}
\newtheorem{lem}[thm]{Lemma}
\newtheorem{question}[thm]{Question}
\newtheorem{con}[thm]{Conjecture}
\newtheorem{prop}[thm]{Proposition}
\theoremstyle{definition}
\newtheorem{defn}[thm]{Definition}
\newtheorem{notn}[thm]{Notation}
\newtheorem{rem}[thm]{Remark}
\newtheorem{example}[thm]{Example}
\newtheorem{cond}[thm]{Condition}

\section{Introduction}

Association schemes sit at the intersection of a variety of mathematical disciplines, having been studied
in statistics, yielding applications in coding theory, bearing relations to graph theory and
combinatorics, and, most recently, attracting the interest of group theorists.  Zieschang \cite{PHZ}
presents association schemes as a natural generalization of groups, explaining how one can define
subschemes, quotient schemes, morphisms between association schemes, kernels, and so on on.
In fact, by restricting one's attention to a certain class of association schemes, the ``thin" schemes,
one precisely recovers group theory.

With a definition of morphisms of association schemes, it is natural to investigate their categorical properties;
Hanaki \cite{Han} made such a study, referring to the category of association schemes by ${\mathcal AS}$.
Unfortunately this category is not as well-behaved as one might desire.  For example, morphisms
in general do not have images; thus there is no complete analogue of the first isomorphism theorem for groups.
Moreover, many of the standard operations that one applies to association schemes, such as taking
thin radicals or studying adjacency algebras, are not functors from ${\mathcal AS}$ to
another category.

In this paper, after giving some background on association schemes in Section \ref{sec:background},
we define, in Section \ref{sec:admissible}, a subcategory $\Sa$ of the category of
finite association schemes by adding an extra
requirement for morphisms; those morphisms satisfying our requirement will be called admissible
morphisms (see Definition \ref{def:admissible}).
Our subcategory is ``wide" in the sense that it contains all finite association schemes
as objects.  
Moreover, the category $\Sa$ has many of the desirable features absent from the traditional category.
For example, the image of an admissible morphism is a subscheme, and
in Section \ref{sec:first}, we can then prove a first isomorphism theorem for schemes.
We will also consider a quotient $\Sal$ of the category $\Sa$, in which one identifies
morphisms that coincide on scheme elements.

In Section \ref{sec:thin}, we investigate some constructions relating the category $\G$
of finite groups and our category $\Sa$ of association schemes.  
For example, given a finite group $G$, let $S(G)$ denote the corresponding thin
association scheme.  It is easy to check that
$S$ is a functor from $\G$ to $\Sa$; we let $S_a$ denote
the corresponding functor from $\G$ to $\Sal$.  In category theory,
whenever one is presented with a functor, it is natural to ask if it has a left or right adjoint.
In fact, as we show in Section \ref{sec:thin},
the functor $S_a$ has both a left and a right adjoint.  Given a finite scheme $S$, we let $\Rad(S)$
denote the group associated to the thin elements of $S$, and we show that
$\Rad$ is a functor from $\Sal$ to $\G$, which is right adjoint to $S_a$.
Likewise, given a scheme $S$, we can take the quotient of $S$ by its thin residue
to get a thin scheme, the thin quotient of $S$;
we let $\Res(S)$ denote the associated group.  Then $\Res$ is
a functor from $\Sal$ to $\G$, which
is left adjoint to $S_a$.  From a categorical point of view, such adjunctions help to explain the central
role of the thin radical and thin quotient constructions.  Moreover, the unit of the $(S_a,\Rad)$-adjunction and
the counit of the $(\Res,S_a)$-adjunctions are both isomorphisms, while the counit of the $(S_a,\Rad)$-adjunction
and the unit of the $(\Res,S_a)$-adjunctions are isomorphisms precisely when applied to thin schemes.
These facts give a categorical account of the group correspondence described by Zieschang in \cite[Section 5.5]{PHZ}.

In Section \ref{sec:adjalg}, we consider the adjacency algebra $\A(S)$ of an association scheme $S$.  We show that the
operation taking $S$ to $\A(S)$ is a functor from $\Sa$ to the category of algebras; again, this is not the case
if one works in the traditional category of association schemes.
In Section \ref{sec:product}, we investigate products of association schemes,
and we show that $\A$ takes products of schemes to tensor products
of algebras.  With this preparation, we show in Section \ref{sec:Hopf} that 
the adjacency algebra $\A(T)$ of a thin scheme $T$ is a Hopf algebra which is isomorphic to the group algebra
of $\Res(T)$; in general, if $S$ is a scheme, we show that
$\A(S)$ is a Hopf comodule over the group algebra of $\Res(S)$.  Using this, we show how one may view the complex
representations of the scheme $S$ as a module over the representation ring of the group $\Res(S)$.
This reformulates a result of Hanaki (\cite{Han2}) in a character-free setting.

\section{Background on Association Schemes}\label{sec:background}

We now consider some background about schemes; this material can be found in more detail in \cite{PHZ}.
For a set $X$, let $\Pp(X)$ denote the power set of $X$.

\begin{defn}\label{def:scheme} Given a finite set $X$, an association scheme, or scheme, on $X$, is a set
$S$ consisting of nonempty subsets $s\in \Pp(X\times X)$ and satisfying the following axioms.

\begin{enumerate}

	\item $S$ is a partition of $X\times X$;
	\item The subset $1_X:=\{(x,x):x\in X\}$ is an element of $S$;
	\item For each $s\in S$, the subset $s^*:=\{(x,y)\in X\times X:(y,x)\in s\}$ is in $S$.
	\item For each triple $p, q, r\in S$, there is a nonnegative integer $a_{pq}^r$ such that
	if $(x,z)\in r$, then \[\left|\{y\in X:(x,y)\in p\text{ and }(y,z)\in q\}\right|=a_{pq}^r.\]
	We call the integers $a_{pq}^r$ the {\it structure constants} of the scheme $S$.

\end{enumerate}
\end{defn}

\begin{example}\label{ex:group} Suppose $G$ is a finite group.  Let $S(G)$ denote the set of subsets of 
$G\times G$ of the form \[g\tilde{\:}:=\{(g_1,g_2)\in G\times G:g_2=g_1g\},\] for each $g\in G$.
Then it is easy to show that $S(G)$ is a scheme.  Moreover,
\begin{equation}\label{eqn:structure}
a_{g\tilde{\:} h\tilde{\:}}^{k\tilde{\:}}=\begin{cases} 1 & gh=k \\ 0 & \text{otherwise.}\end{cases}
\end{equation}
\end{example}

\begin{defn}\label{def:cprod}
Suppose $S$ is a scheme on a finite set $X$, and $P, Q\subseteq S$.  Then the {\it complex product} of $P$ and $Q$,
denoted $PQ$ is
$\{r\in S:a_{pq}^r>0\text{ for some }p\in P, q\in Q\}$.
\end{defn}

Complex product determines an associative
product on $\Pp(S)$.  If $p, q\in S$, we write $pq$, $pQ$ and $Pq$ for $\{p\}\{q\}$,
$\{p\}Q$ and $P\{q\}$ respectively.

\begin{defn}\label{def:closed}
If $S$ is a scheme on a finite set $X$, then 
a nonempty subset $T\subseteq S$ is a {\it closed subset} of $S$ if $TT=T$.
\end{defn}

If $T\subseteq S$ is closed, then the finiteness of $X$ implies that $t^*\in T$ whenever $t\in T$.

\begin{example}
If $G$ is a group, then a subset $T$ of the scheme $S(G)$ on $G$
is closed if and only if there is a subgroup $K\leq G$ such that $T=\{k\tilde{\:}:k\in K\}$.
\end{example}

\begin{defn}\label{def:morphism}
A morphism from a scheme $S$ on $X$ to a scheme $T$ on $Y$
is a function $\phi:X\cup S\to Y\cup T$ such that $\phi(X)\subseteq Y$,
$\phi(S)\subseteq T$, and whenever $s\in S$
and $(x_1,x_2)\in s$, we have $(\phi(x_1), \phi(x_2))\in \phi(s)$.
\end{defn}

A morphism is an isomorphism if it has an inverse,
or, equivalently, if it is a bijection.  We will typically denote the
restriction of $\phi$ to $X$ or $S$ by $\phi$, rather than $\phi|_X$ or $\phi|_S$.
When context permits, we will often just write $\phi:S\to T$, rather than $\phi:X\cup S\to Y\cup T$,
to indicate a morphism from a scheme $S$ to a scheme $T$.

\begin{rem}\label{rem:determine} The function $\phi:S\to T$
is determined by the function $\phi:X\to Y$.  Indeed, given $s\in S$, there is
a pair $(x_1, x_2)\in s$, and $\phi(s)$ must be the unique element of $T$ containing
$(\phi(x_1), \phi(x_2))$.
\end{rem}

\begin{defn}\label{def:ker} If $\phi$ is a morphism from a scheme $S$ to a scheme $T$, 
the kernel $\ker\phi$ of $\phi$ is defined to be $\{s\in S:\phi(s)=1\}$.
\end{defn}

By \cite[Lemma 5.1.2(ii)]{PHZ}, $\ker\phi$ is a closed subset of $S$.

\begin{defn}\label{def:normal}
A closed subset $T$ is {\it normal} if $pT=Tp$ for all $p\in S$, and {\it strongly normal} if $p^*Tp=T$
for all $p\in S$.  We call $pT$ and $Tp$ left and right cosets of $T$ respectively.
\end{defn}

Of course, if $T$ is strongly normal, then $Tp\subseteq pp^*Tp=pT$ and $pT\subseteq pTp^*p=Tp$,
so $T$ is normal.

\begin{notn}\label{notn:structure}
Given subsets $P, Q\subseteq S$ and $r\in R$, we write $a_{PQ}^r$ for $\sum_{p\in P, q\in Q} a_{pq}^r$.
As above, we write $a_{pQ}^r$ for $a_{\{p\}Q}^r$ or $a_{Pq}^r$ for $a_{P\{q\}}^r$.
\end{notn}

\begin{defn}\label{def:gcoset}  If $S$ is a scheme on a finite set $X$, $x\in X$, and $s\in S$, we let
$xs=\{y\in X:(x,y)\in s\}$.  If $T\subseteq S$ and $Y\subseteq X$, we let $YT=\cup_{y\in Y,t\in T} yt$.
We write $xT$ for $\{x\}T$ and $Yt$ for $Y\{t\}$.  If $T$ is a closed subset of $S$ and $x\in X$, then
$xT$ is called the {\it geometric coset} of $T$ containing $x$.
The set of geometric cosets of $T$ is denoted $X/T$.
\end{defn}

Of course, $xT$ contains $x$ since $1\in T$ and $x1=\{x\}$.

\begin{defn}\label{def:subscheme}
The {\it subscheme} of $T$ determined by $xT$, denoted $T_{xT}$, is the set of all nonempty subsets
of $xT\times xT$ of the form $t_{xT}:=t\cap (xT\times xT)$.
\end{defn}

Indeed, $T_{xT}$ is a scheme, cf. \cite[Theorem 2.1.8 (ii)]{PHZ}.

\begin{notn} Suppose $T$ is a closed subset of $S$.  For each $s\in S$,
let \[s^T=\{(xT,yT):(x',y')\in s\text{ for some }x'\in xT, y'\in yT\}.\]  For a subset $R\subseteq S$,
we let $R\dm T$ denote the set of all subsets $\{r^T\subseteq X/T\times X/T:r\in R\}$.
\end{notn}

\begin{defn}\label{def:quotient} The quotient scheme of $S$ by $T$ is the set $S\dm T$.
\end{defn}

Indeed, $S\dm T$ is a scheme on $X/T$, cf. \cite[Theorem 4.1.3]{PHZ}.

\begin{rem}\label{rem:quotient} If $p$ and $q$ are in $S$, then $p^T=q^T$ if and only if $TpT=TqT$.
See \cite[Lemma 4.1.1]{PHZ}.
\end{rem}

\begin{defn}\label{def:valency}
Given a scheme $S$ on a set $X$, the {\it valency} of an element $s\in S$, denoted $n_s$ is the number $a_{ss^*}^1$.

\end{defn}

Note that for any $x\in X$, $n_s$ counts the number of elements in $xs$.  Thus, for any $s$,
$n_s>0$ since $s$ is nonempty.
Since we are assuming throughout that $X$ is finite, it follows from \cite[Lemma 1.1.2(iii)]{PHZ}
that $n_s=n_{s^*}$ for any $s\in S$.

\begin{defn}\label{def:thin}
We say $s$ is {\it thin} if $ss^*=\{1\}$, and we say $S$ is a {\it thin scheme} if all of its elements are thin.
\end{defn}

By \cite[Lemma 1.5.1]{PHZ}, $s$ is thin if and only if $n_s=1$.  We let $O_{\vartheta}(S)$ denote the set
of thin elements of $S$; this is called the thin radical of $S$.  Since we assume $X$ to be finite,
$O_{\vartheta}(S)$ is closed by
\cite[Lemma 2.5.9]{PHZ}, and 
by \cite[Lemma 1.5.2]{PHZ}, if $p$ and $q$ are thin elements of a scheme $S$, then
$pq$ is a singleton set, so $pq$ consists of a single thin element.
Of course, $s\{1\}=s=\{1\}s$ for any $s\in S$.  For each thin element $s$,
$s^*$ is also thin, so $ss^*=\{1\}=s^*s$.  Therefore,
the complex product induces a group structure on the set of thin elements in a scheme $S$
on a finite set $X$.

\begin{defn}\label{def:radical} Suppose $S$ is a scheme on a finite set $X$.
Let $\Rad(S)$ denote the group of singleton subsets
consisting of a thin element of $S$, under complex product.
\end{defn}

\begin{example}\label{ex:not} Suppose $S$ is a thin scheme on a finite set $X$ with more than $2$ elements,
and let $T$ be the unique scheme
on $X$ with two elements $\{1_X,t\}$.  Let $\phi:X\cup S\to X\cup T$ be defined by
$\phi(x)=x$ for $x\in X$, $\phi(1_X)=1_X$, and $\phi(s)=t$ for $s\neq 1_X$.  Then it is easy to see that
$\phi$ is a morphism of schemes.  However, for $\{s\}\in \Rad(S)$, $s\neq 1_X$, we have
$\{\phi(s)\}=\{t\}$, and $\{t\}$ is not an element of $\Rad(T)$.  Thus, $\Rad$ fails to be a functor
from schemes to groups.
\end{example}

\begin{rem}\label{rem:intersection}
If $S$ is a scheme on a finite set $X$, it is easy to show that the intersection of a collection of strongly normal
closed subsets of $S$ is strongly normal.  The intersection of all strongly normal closed subsets of $S$
is called the thin residue of $S$, denoted $O^{\vartheta}(S)$.  By \cite[Lemma 4.2.5]{PHZ}, a closed subset $T$
is strongly normal if and only if the quotient scheme $S\dm T$ is thin.  In particular, $S\dm O^{\vartheta}(S)$
is thin.
\end{rem}

\begin{defn}\label{def:residue}  Suppose $S$ is a scheme on a finite set $X$.
Let $\Res(S)$ denote the group of singleton subsets of $S\dm O^{\vartheta}(S)$ under the
complex product.
\end{defn}

\section{Admissible morphisms}\label{sec:admissible}

In this section, we define our category $\Sa$ of schemes, as well as
a variant $\Sal$ of $\Sa$.  We show that classical
isomorphisms of schemes, inclusions of subschemes,
and quotients by normal closed subsets
are morphisms in $\Sa$.   We show that if $\phi$ is a morphism in $\Sa$ from a scheme
$S$ to a scheme $T$, then $\phi$ takes thin elements in $S$ to thin elements in $T$, and
we show that any morphism from a scheme to a thin scheme is a morphism in $\Sa$.  

\begin{defn}\label{def:admissible} Suppose $S$ is a scheme on a finite set $X$ and $T$ is a scheme on a
finite set $Y$.
A morphism of schemes $\phi$ from $S$ to $T$ is {\it admissible} if for any
$x\in X, y\in Y, s\in S$ such that
$(\phi(x),y)\in \phi(s)$, there exists $x'\in X$ such that $\phi(x')=y$ and $(x,x')\in s$.
\end{defn}

Zieschang \cite[p. 83]{PHZ} defines a {\it homomorphism} of schemes to be
a morphism $\phi:X\cup S\to Y\cup T$ such that
for any $x_1, x_2\in X$ and $s\in S$ such that $(\phi(x_1),\phi(x_2))\in \phi(s)$,
there exists $x_1', x_2'\in X$ such that $(x_1',x_2')\in s$, $\phi(x_1')=\phi(x_1)$, and
$\phi(x_2')=\phi(x_2)$.  

\begin{lem}\label{lem:admissiblehom} Any admissible morphism of schemes is a homomorphism.
\end{lem}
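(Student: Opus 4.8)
The plan is to unwind both definitions and observe that the admissibility condition is simply a strengthening of Zieschang's homomorphism condition, so the implication will follow by directly feeding the homomorphism hypothesis into the admissibility hypothesis.

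First I would let $\phi$ be an admissible morphism from a scheme $S$ on $X$ to a scheme $T$ on $Y$, and take arbitrary $x_1, x_2 \in X$ and $s \in S$ with $(\phi(x_1), \phi(x_2)) \in \phi(s)$. This is exactly the hypothesis appearing in the definition of a homomorphism, and the goal is to produce $x_1', x_2' \in X$ with $(x_1', x_2') \in s$, $\phi(x_1') = \phi(x_1)$, and $\phi(x_2') = \phi(x_2)$.

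The key step is to apply the admissibility of $\phi$ (Definition \ref{def:admissible}) with $x := x_1$ and $y := \phi(x_2) \in Y$. Since $(\phi(x_1), \phi(x_2)) = (\phi(x), y) \in \phi(s)$, admissibility yields some $x' \in X$ with $\phi(x') = \phi(x_2)$ and $(x_1, x') \in s$. Setting $x_1' := x_1$ and $x_2' := x'$ then gives $(x_1', x_2') \in s$, while $\phi(x_1') = \phi(x_1)$ holds trivially and $\phi(x_2') = \phi(x') = \phi(x_2)$ by the choice of $x'$. This is precisely what the homomorphism condition demands, so the proof concludes there.

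I do not expect any genuine obstacle: the content of the lemma is just that admissibility is the stronger notion. Whereas a homomorphism need only match both endpoints up to the fibers of $\phi$, admissibility lets us keep the first endpoint $x_1$ exactly fixed and adjust only the second coordinate, so the choice $x_1' = x_1$ automatically satisfies the looser requirement $\phi(x_1') = \phi(x_1)$. The only care needed is to check that the quantifier structure lines up, namely that the single witness $x'$ supplied by admissibility serves as the required $x_2'$.
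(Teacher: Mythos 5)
Your proof is correct and is essentially identical to the paper's: both apply admissibility with $x = x_1$ and $y = \phi(x_2)$ to obtain $x'$, then take $x_1' = x_1$ and $x_2' = x'$. No issues.
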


\begin{proof}
Suppose given $x_1, x_2\in X$ and $s\in S$ such that $(\phi(x_1), \phi(x_2))\in \phi(s)$.  Letting $y=\phi(x_2)$,
we get from Definition \ref{def:admissible} an element $x'\in X$ such that $\phi(x')=\phi(x_2)$ and $(x_1,x')\in s$.
Let $x_1'=x_1$ and $x_2'=x'$.
\end{proof}

As Zieschang observes \cite[Section 5.2]{PHZ}, the composition of two homomorphisms of schemes is not
in general a homomorphism.  For admissible morphisms, however, we have the following Lemma.

\begin{lem}\label{lem:category} The composition of any two admissible
morphisms is admissible.
\end{lem}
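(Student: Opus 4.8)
The plan is to take admissible morphisms $\phi$ from a scheme $S$ on $X$ to a scheme $T$ on $Y$, and $\psi$ from $T$ to a scheme $U$ on $Z$, and to verify directly that the composite $\psi\circ\phi$ satisfies the condition of Definition \ref{def:admissible}. First I would record that $\psi\circ\phi$ is again a morphism of schemes: composing the two underlying functions on points and scheme elements yields a function $X\cup S\to Z\cup U$ with $(\psi\circ\phi)(X)\subseteq Z$ and $(\psi\circ\phi)(S)\subseteq U$, and the pair condition of Definition \ref{def:morphism} propagates through the composite, so that $(\psi\circ\phi)(s)=\psi(\phi(s))$ for each $s\in S$ (with Remark \ref{rem:determine} confirming that this is the correct image element).

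The substance of the proof is then a two-step lifting argument. Fix $x\in X$, $z\in Z$, and $s\in S$ with $(\psi(\phi(x)),z)\in\psi(\phi(s))$; the goal is to produce $x'\in X$ with $(\psi\circ\phi)(x')=z$ and $(x,x')\in s$. I would apply the two admissibility hypotheses in the order ``outer first, inner second.'' Applying admissibility of $\psi$ to the point $\phi(x)\in Y$, the point $z\in Z$, and the element $\phi(s)\in T$, I obtain some $y'\in Y$ with $\psi(y')=z$ and $(\phi(x),y')\in\phi(s)$. Now applying admissibility of $\phi$ to $x\in X$, the newly found $y'\in Y$, and $s\in S$, I obtain $x'\in X$ with $\phi(x')=y'$ and $(x,x')\in s$.

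Finally I would assemble the conclusion: $(\psi\circ\phi)(x')=\psi(\phi(x'))=\psi(y')=z$ while $(x,x')\in s$, which is precisely what Definition \ref{def:admissible} demands of $\psi\circ\phi$. The argument is essentially bookkeeping, so I do not anticipate a genuine obstacle; the only point requiring care is the order in which the two admissibility conditions are invoked. Since admissibility lifts a \emph{target} point back along a morphism, one must first lift $z$ through $\psi$ to an intermediate point $y'$ and only then lift $y'$ through $\phi$ to $x'$; reversing this order would leave the intermediate data unmatched. This is exactly the feature that fails for the weaker notion of homomorphism, which is why the present composability statement is not automatic.
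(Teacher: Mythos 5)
Your proposal is correct and follows essentially the same two-step lifting argument as the paper's proof: first use admissibility of $\psi$ to lift $z$ to an intermediate point $y'\in Y$ with $(\phi(x),y')\in\phi(s)$, then use admissibility of $\phi$ to lift $y'$ to $x'\in X$ with $(x,x')\in s$. Your added remarks on why the order of the two lifts matters, and on checking that the composite is a morphism, are sensible but not points the paper dwells on.
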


\begin{proof}
Suppose that $S$, $T$, and $U$ are schemes on $X$, $Y$, and $Z$ respectively,
and $\phi:S\to T$ and $\psi:T\to U$ are admissible morphisms of schemes.  Suppose $x\in X$, $z\in Z$, $s\in S$,
and $(\psi\phi(x),z)\in \psi\phi(s)$.  Then letting $y=\phi(x)$ and $t=\phi(s)$, we have
$(\psi(y),z)\in \psi(t)$, so by admissibility 
of $\psi$, there exists $y'\in Y$ such that $\psi(y')=z$ and $(y,y')\in t$.  That is, $(\phi(x),y')\in \phi(s)$.
By admissibility
of $s$, there exists $x'\in X$ such that $\phi(x')=y'$ and $(x,x')\in s$.  Thus, $\psi\phi(x')=\psi(y')=z$.
\end{proof}

\begin{lem}\label{lem:iso}
Any isomorphism of schemes is admissible.
\end{lem}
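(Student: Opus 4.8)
The plan is to exploit the single feature that distinguishes an isomorphism from a general morphism: it comes equipped with an inverse morphism $\phi^{-1}$. First I would record what it means for $\phi^{-1}$ to be a morphism of schemes. If $\phi$ carries $S$ on $X$ to $T$ on $Y$, then $\phi^{-1}$ carries $T$ to $S$, so $\phi^{-1}(Y)\subseteq X$, $\phi^{-1}(T)\subseteq S$, and, crucially, whenever $t\in T$ and $(y_1,y_2)\in t$ we have $(\phi^{-1}(y_1),\phi^{-1}(y_2))\in\phi^{-1}(t)$. Being a genuine two-sided inverse as a map $X\cup S\to Y\cup T$, we also have $\phi^{-1}\circ\phi=\id$ and $\phi\circ\phi^{-1}=\id$; in particular $\phi^{-1}(\phi(s))=s$ for every $s\in S$.

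Next I would verify the condition of Definition \ref{def:admissible} directly. Suppose $x\in X$, $y\in Y$, and $s\in S$ satisfy $(\phi(x),y)\in\phi(s)$. I would propose $x':=\phi^{-1}(y)$ as the required witness. Since $\phi^{-1}(Y)\subseteq X$ we have $x'\in X$, and since $\phi\circ\phi^{-1}=\id$ we have $\phi(x')=y$, so two of the three demands of admissibility hold by construction. It then remains to check $(x,x')\in s$. Rewriting the hypothesis as $(\phi(x),\phi(x'))\in\phi(s)$ and setting $t=\phi(s)\in T$, I would apply the morphism property of $\phi^{-1}$ to the pair $(\phi(x),\phi(x'))\in t$, obtaining $(\phi^{-1}(\phi(x)),\phi^{-1}(\phi(x')))\in\phi^{-1}(t)$, that is, $(x,x')\in\phi^{-1}(\phi(s))=s$, exactly as needed.

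I do not expect any real obstacle here; the argument is essentially a single application of the inverse morphism's defining property. The only point deserving attention is the step $\phi^{-1}(\phi(s))=s$, which is where the bijectivity of $\phi$ \emph{on scheme elements}, rather than merely on points, enters. This is what ensures the witness $x'$ produced from the point map is compatible with the specific element $s$ and not with some other element of $S$ mapping into $T$ near it; once this identity is in hand, the verification closes immediately.
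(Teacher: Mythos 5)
Your proof is correct and follows essentially the same route as the paper: take $x'$ to be the unique preimage of $y$ under the bijection $\phi$, then use the fact that the inverse is itself a morphism (equivalently, that $\phi$ reflects relations) to conclude $(x,x')\in s$. The paper phrases the last step more tersely, but the argument is the same.
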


\begin{proof}
Suppose $S$ and $T$ are schemes on $X$ and $Y$ respectively, and $\phi:S\to T$ is an isomorphism of schemes.
Suppose, moreover, that $x\in X$, $y\in Y$, $s\in S$, and $(\phi(x),y)\in \phi(s)$.  Then $y=\phi(x')$ for a unique
$x'\in X$ since $\phi$ is an isomorphism.  Also, since $(\phi(x),\phi(x'))\in \phi(s)$ and $\phi$ is an isomorphism,
we have $(x,x')\in s$.
\end{proof}

For any scheme $S$ on $X$, the identity morphism is admissible by Lemma \ref{lem:iso},
and the composition of any two admissibile morphisms is admissible by Lemma \ref{lem:category}.
Thus, we have a category of association schemes.

\begin{defn}\label{def:S} Let $\Sa$ be the category whose objects are finite association schemes
and whose morphisms are admissible morphisms.
\end{defn}

We will also wish to consider a certain variant of the category $\Sa$.

\begin{defn}\label{def:alg-equiv} Suppose $S$ and $T$ are schemes on sets $X$ and $Y$.
We say two morphisms $\phi$ and $\phi'$ in $\Hom_{\Sa}(S,T)$ are {\it algebraically equivalent}
if $\phi(s)=\phi'(s)$ for all $s\in S$.  We then write $\phi\tal \phi'$.
\end{defn}

Note that if $\phi$ represents the equivalence class of an admissible morphism $\phz$,
then $\phi(s)$ is well-defined for $s\in S$.
Obviously, algebraic equivalence is an equivalence relation.  It is also easy to see that composition
respects algebraic equivalence in the following sense:  suppose $S$, $T$, and $U$ are schemes
on sets $X$, $Y$, and $Z$; suppose $\phi, \phi'\in \Hom_{\Sa}(S,T)$, while $\psi, \psi'\in \Hom_{\Sa}(T,U)$,
and suppose $\phi\tal \phi'$ and $\psi\tal \psi'$.  Then $\psi\circ\phi\tal\psi'\circ\phi'$.  Thus, we may
compose algebraic equivalence classes of morphisms, and we have the following category:

\begin{defn}\label{def:Salg}  Let $\Sal$ denote the category whose objects are finite association schemes
and whose morphisms are algebraic equivalence classes of admissible morphisms.  Let $F:\Sa\to \Sal$
denote the functor which is the identity on objects and takes a morphism to its equivalence class.
\end{defn}

We have seen that isomorphisms are always admissible.  We now show that inclusions of subschemes
and quotients by normal closed subsets are also admissible.

\begin{lem}\label{lem:inclusion} Suppose $T$ is a closed subset of a scheme $S$ on
a finite set $X$, and $z\in X$.
Let $\io_{zT}$ be the function from $zT\cup T_{zT}$ to $X\cup S$ defined by
\[\io_{zT}(x)=x\text{ for }x\in zT;\quad \io_{zT}(t_{zT})=t\text{ for }t_{zT}\in T_{zT}.\]
Then $\io_{zT}$ is an admissible morphism from $T_{zT}$ to $S$.
\end{lem}

\begin{proof}
It is easy to see that $\io_{zT}$ is a morphism of schemes.
If $(\io_{zT}(x),y)\in t$, then $x\in zT$ and $(x,y)\in t$, so $y\in xT=zT$.  Thus, $y=\io_{zT}(y)$ and $(x,y)\in t_{zT}$.
\end{proof}

\begin{lem}\label{lem:normal2} Suppose $S$ is a scheme on a finite set $X$ and $K$ is a normal closed subset of $S$.
Then the morphism $\pi_K:S\to S\dm K$ defined by $\pi_K(x)=xK$ and $\pi_K(s)=s^K$
is an admissible, surjective morphism with kernel $K$.
\end{lem}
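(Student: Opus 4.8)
The plan is to verify four things about $\pi_K$ in turn: that it is a morphism, that it is surjective, that its kernel equals $K$, and that it is admissible. The first two are formal. Since $\pi_K(x) = xK \in X/K$ and $\pi_K(s) = s^K \in S \dm K$, the image requirements of Definition \ref{def:morphism} hold, and for the compatibility condition I would note that if $(x_1, x_2) \in s$, then because $x_1 \in x_1 K$ and $x_2 \in x_2 K$ the definition of $s^K$ gives $(x_1 K, x_2 K) \in s^K$ at once. Surjectivity is then immediate, since every element of $X/K$ is of the form $xK = \pi_K(x)$ and every element of $S\dm K$ is of the form $s^K = \pi_K(s)$.

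Next I would identify the kernel. The identity of $S\dm K$ is $(1_X)^K$, so by Definition \ref{def:ker} we have $\ker \pi_K = \{s \in S : s^K = (1_X)^K\}$, and by Remark \ref{rem:quotient} this condition is equivalent to $KsK = K 1_X K = K$. It then remains to check that $\{s \in S : KsK = K\} = K$. The inclusion $s \in K \Rightarrow KsK = K$ follows from $KsK \subseteq KKK = K$ together with the reverse inclusion $K \subseteq KsK$, which one gets from $1 \in s^* s$ and closure of $K$; conversely, if $KsK = K$ then $s \in \{1\}s\{1\} \subseteq KsK = K$.

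The main obstacle is admissibility, and this is the only step that uses normality of $K$. Suppose $x \in X$, $yK \in X/K$, and $s \in S$ satisfy $(xK, yK) \in s^K$. Unwinding the definition of $s^K$, I would choose $x' \in xK$ and $y' \in yK$ with $(x', y') \in s$. Since $x' \in xK$, there is $k_1 \in K$ with $(x, x') \in k_1$, and composing along $x'$ shows $(x, y') \in r$ for some $r \in k_1 s \subseteq Ks$. Invoking normality, $Ks = sK$, so $r \in s k_2$ for some $k_2 \in K$; because $(x, y') \in r$ and $a_{s k_2}^r > 0$, there is $x'' \in X$ with $(x, x'') \in s$ and $(x'', y') \in k_2$. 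The latter forces $y' \in x'' K$, whence $x'' K = y' K = yK$, so $\pi_K(x'') = yK$ and $(x, x'') \in s$, which is exactly the conclusion required by Definition \ref{def:admissible}. I expect the delicate point to be precisely this coset bookkeeping---producing a representative of the target coset that is reachable from $x$ along $s$---with normality supplying exactly the exchange $Ks = sK$ that makes it possible.
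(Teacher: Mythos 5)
Your proof is correct and takes essentially the same route as the paper on the one substantive point: the paper's admissibility argument is exactly your coset computation, compressed into the single sentence ``since $K$ is normal, $(x,x')\in s$ for some $x'\in yK$,'' which your exchange $Ks=sK$ and the resulting $x''$ with $(x,x'')\in s$, $x''K=yK$ unpacks correctly. The only difference is that the paper cites Zieschang's Theorem 5.3.1 for the morphism, surjectivity, and kernel claims, whereas you verify them directly; those verifications are all sound.
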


\begin{proof} By \cite[Theorem 5.3.1]{PHZ}, $\pi_K$ is a surjective morphism with kernel $K$.  Suppose
$(\pi_K(x),yK)\in \pi_K(s)$;
that is, $(xK,yK)\in s^K$.  Then since $K$ is normal, $(x,x')\in s$ for some $x'\in yK$.  Thus,
$\pi_K(x')=x'K=yK$.
\end{proof}

\begin{cor}\label{cor:normal2} Suppose $S$ is a scheme on a finite set $X$, and $J$ and $K$ are normal closed subsets of $S$
with $J\subseteq K$.  Then there is an admissible, surjective morphism of schemes $\pi_K^J:S\dm J\to S\dm K$, defined by
$\pi_K^J(xJ)=xK$ and $\pi_K^J(s^J)=s^K$.  Moreover, $\pi_K^J\circ \pi_J=\pi_K$.
\end{cor}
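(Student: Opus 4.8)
The plan is to treat $\pi_K^J$ as the map forced by the factorization $\pi_K^J\circ\pi_J=\pi_K$. Since $\pi_J$ is surjective (Lemma \ref{lem:normal2}), every element of $X/J\cup(S\dm J)$ has the form $\pi_J(x)=xJ$ or $\pi_J(s)=s^J$, so the two stated formulas determine $\pi_K^J$ completely, \emph{provided} they are well defined; and once well-definedness is known, the identity $\pi_K^J\circ\pi_J=\pi_K$ is immediate by evaluating on $x\in X$ and $s\in S$. My strategy is therefore to (i) prove well-definedness, (ii) check directly that $\pi_K^J$ is a morphism, (iii) deduce surjectivity from the factorization, and (iv) deduce admissibility from the admissibility of $\pi_K$.

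For (i), the point map is easy: if $xJ=x'J$, then $x'\in x'J=xJ\subseteq xK$ (the inclusion because $J\subseteq K$), whence $x'K=xK$. The element map is the crux. By Remark \ref{rem:quotient}, $s^J=p^J$ iff $JsJ=JpJ$, while $s^K=p^K$ iff $KsK=KpK$, so I must show $JsJ=JpJ$ implies $KsK=KpK$. Here I would use that $J\subseteq K$ with $K$ closed gives $KJ=JK=K$ (since $1\in J$ forces $K\subseteq KJ\subseteq KK=K$, and symmetrically for $JK$), and then, by associativity of the complex product (Definition \ref{def:cprod}), $KsK=(KJ)s(JK)=K(JsJ)K$; applying the same computation to $p$ yields $KsK=K(JsJ)K=K(JpJ)K=KpK$. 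This well-definedness step is the main, if routine, obstacle.

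For (ii), suppose $(xJ,x'J)\in s^J$; by definition of $s^J$ there are $u\in xJ$ and $v\in x'J$ with $(u,v)\in s$, and since $xJ\subseteq xK$ and $x'J\subseteq x'K$, this gives $(xK,x'K)\in s^K$, i.e. $(\pi_K^J(xJ),\pi_K^J(x'J))\in\pi_K^J(s^J)$. For (iii), the factorization gives $\im\pi_K^J\supseteq\im(\pi_K^J\circ\pi_J)=\im\pi_K=X/K\cup(S\dm K)$, so $\pi_K^J$ is surjective. Finally, for (iv) I invoke the admissibility of $\pi_K$: given $(\pi_K^J(xJ),wK)\in\pi_K^J(s^J)$, that is $(xK,wK)\in s^K=\pi_K(s)$, admissibility of $\pi_K$ (Lemma \ref{lem:normal2}) produces $x'\in X$ with $x'K=wK$ and $(x,x')\in s$; then $\pi_K^J(x'J)=x'K=wK$, and $(x,x')\in s$ gives $(xJ,x'J)\in s^J$, which verifies Definition \ref{def:admissible} for $\pi_K^J$.
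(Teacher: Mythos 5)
Your proof is correct, but it takes a genuinely different route from the paper. The paper does not verify anything about $\pi_K^J$ directly: it invokes Zieschang's analogue of the third isomorphism theorem to produce an isomorphism $\psi:S\dm K\to (S\dm J)\dm(K\dm J)$, notes that $K\dm J$ is normal in $S\dm J$ so that $\pi_{K\dm J}$ is admissible by Lemma \ref{lem:normal2}, and then identifies $\pi_K^J$ with the composite $\psi^{-1}\circ\pi_{K\dm J}$, inheriting admissibility, surjectivity, and the factorization $\pi_K^J\circ\pi_J=\pi_K$ from that identification. You instead build $\pi_K^J$ by hand: well-definedness of the element map via Remark \ref{rem:quotient} together with the complex-product identity $KJ=JK=K$ (so $KsK=K(JsJ)K$), the morphism property from $xJ\subseteq xK$, surjectivity from the factorization through the surjection $\pi_K$, and admissibility by transporting the admissibility of $\pi_K$ (the condition $(xK,wK)\in s^K$ is literally the hypothesis of Definition \ref{def:admissible} for $\pi_K$, and the witness $x'$ it produces serves, via $x'J$, as a witness for $\pi_K^J$). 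All of these steps check out; the well-definedness computation is the one point the paper's route gets for free from the cited theorem, and you handle it correctly. The trade-off is that the paper's proof reuses established machinery and exhibits $\pi_K^J$ as a composite of known admissible maps, while yours is self-contained, avoids citing Zieschang's Theorem 5.3.3 and Lemma 4.2.4, and is more elementary.
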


\begin{proof}
Using \cite[Theorem 5.3.3]{PHZ}, we have an isomorphism $\psi:S\dm K\iso (S\dm J)\dm (K\dm J)$, and from
the proof of that theorem, we see that $\psi(xK)=(xJ)(K\dm J)$ and $\psi(s^K)=(s^J)^{K\dm J}$.  
By \cite[Lemma 4.2.4]{PHZ}, $K\dm J$ is normal in $S\dm J$, so by Lemma \ref{lem:normal2},
$\pi_{K\dm J}:S\dm J\to (S\dm J)\dm (K\dm J)$ is admissible.  By Lemmas \ref{lem:category}
and \ref{lem:iso}, $\psi^{-1}\circ \pi_{K\dm J}$ 
is admissible.  But \[\psi^{-1}(\pi_{K\dm J}(xJ))=\psi^{-1}((xJ)(K\dm J)=xK\quad\text{and}\quad
\psi^{-1}(\pi_{K\dm J}(s^J))=\psi^{-1}((s^J)^{K\dm J})=s^K.\]  Thus, $\pi_K^J=\psi^{-1}\circ\pi_{K\dm J}$,
which is admissible.  Since $\psi$ is an isomorphism and $\pi_{K\dm J}$ is surjective, $\pi_K^J$ is surjective.
Now, it is easy to check that $\psi\circ \pi_K=\pi_{K\dm J}\circ \pi_J$, so
\[\pi_K=\psi^{-1}\circ\pi_{K\dm J}\circ \pi_J=\pi_K^J\circ \pi_J.\]
\end{proof}

\begin{lem}\label{lem:relval} Suppose $S$ and $T$ are schemes on finite sets $X$ and $Y$, and 
$\phi\in \Hom_\Sa(S,T)$.  Then for each $s\in S$, there is a positive integer $n_s^\phi$ such that
if $(\phi(x_0),y)\in \phi(s)$, then there are $n_s^\phi$ elements $x\in X$ such that
$\phi(x)=y$ and $(x_0,x)\in s$.  In other words, $|\phi^{-1}(y)\cap x_0s|=n_s^\phi$.
\end{lem}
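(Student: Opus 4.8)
The plan is to show that the quantity $N(x_0,y):=|\phi^{-1}(y)\cap x_0s|$ is, for every choice of $(x_0,y)$ with $(\phi(x_0),y)\in\phi(s)$, equal to a single structure constant of $S$ that makes no reference to $x_0$ or $y$; once this is done, both positivity and independence of the choice of $(x_0,y)$ (which is the whole content of the lemma) are immediate. Throughout I write $K=\ker\phi$, which by Definition \ref{def:ker} is $\{u\in S:\phi(u)=1\}$ and is a closed subset of $S$.

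The key preliminary observation, which I would record first, is that the fibres of $\phi\colon X\to Y$ are governed by $K$: for $x,x''\in X$ one has $\phi(x)=\phi(x'')$ if and only if the unique element $u\in S$ with $(x,x'')\in u$ lies in $K$. Indeed, if $(x,x'')\in u$ then $(\phi(x),\phi(x''))\in\phi(u)$; if $u\in K$ this places $(\phi(x),\phi(x''))$ in $1_Y$, forcing $\phi(x)=\phi(x'')$, while conversely if $\phi(x)=\phi(x'')$ then $\phi(u)$ meets $1_Y$, so by the partition axiom of Definition \ref{def:scheme} we get $\phi(u)=1_Y$ and hence $u\in K$. Both directions use only that $\phi$ is a morphism together with the fact that $S$ and $T$ are partitions. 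Consequently, whenever $\phi(x)=y$ we have $\phi^{-1}(y)=\{x''\in X:\text{the element of }S\text{ through }(x,x'')\text{ lies in }K\}$.

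This is where admissibility enters, and it is the single decisive step. Given a valid pair $(x_0,y)$, Definition \ref{def:admissible} applied to $(\phi(x_0),y)\in\phi(s)$ produces a base point $x\in X$ with $\phi(x)=y$ and $(x_0,x)\in s$, i.e. $x\in \phi^{-1}(y)\cap x_0s$. Using the previous paragraph to rewrite $\phi^{-1}(y)$ in terms of $K$ about this base point $x$, I can then compute
\[N(x_0,y)=\bigl|\{x''\in X:(x,x'')\in u\text{ for some }u\in K,\ (x_0,x'')\in s\}\bigr|.\]
Since $(x,x_0)\in s^*$ and distinct $u\in K$ contribute disjoint sets of $x''$, Notation \ref{notn:structure} identifies the right-hand side with the structure constant $a_{Ks^*}^{s^*}=\sum_{u\in K}a_{us^*}^{s^*}$. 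This number depends only on $s$ and $K$, not on $x_0$ or $y$, so I may set $n_s^\phi:=a_{Ks^*}^{s^*}$; and it is positive because the base point $x$ itself is counted (take $x''=x$, using $1\in K$ and $(x_0,x)\in s$).

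I expect the only real obstacle to be appreciating why admissibility is indispensable rather than cosmetic. The identification $N(x_0,y)=a_{Ks^*}^{s^*}$ collapses the fibre condition ``$\phi(x'')=y$'' into the color condition ``the element through $(x,x'')$ lies in $K$'', but this requires a base point $x$ lying in $\phi^{-1}(y)\cap x_0s$, and only admissibility guarantees that this intersection is nonempty for every valid pair. Without it the argument breaks exactly as in Example \ref{ex:not}: there are valid pairs $(x_0,y)$ with no such $x$, the intersection is empty, and $N(x_0,y)$ drops to $0$, destroying both positivity and constancy. So the plan is to isolate the kernel--fibre description as a general feature of morphisms, and then let admissibility supply the base point that turns the count into a manifestly choice-free structure constant.
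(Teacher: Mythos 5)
Your proof is correct and follows essentially the same route as the paper: admissibility supplies a base point $x\in\phi^{-1}(y)\cap x_0s$, the fibre $\phi^{-1}(y)$ is described via the kernel $K$, and the intersection is counted by structure constants. The only cosmetic difference is that you land on $a_{Ks^*}^{s^*}$ where the paper writes $a_{sK}^{s}$; these agree by the identity $a_{pq}^{r}=a_{q^*p^*}^{r^*}$ together with $K^*=K$.
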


\begin{proof} By admissibility of $\phi$, there is at least one $x\in \phi^{-1}(y)\cap x_0s$.
Now, if $x'\in \phi^{-1}(y)$, and $(x',x)\in k$, then since $(y,y)=(\phi(x'),\phi(x))\in \phi(k)$,
we have $\phi(k)=1$, so $k\in \ker\phi$.  On the other hand, if $(x',x)\in k$, where $k\in \ker\phi$,
then $(\phi(x'),\phi(x))\in \phi(k)=1$, so $\phi(x')=\phi(x)=y$.  Thus,
$\phi^{-1}(y)\cap x_0s=\cup_{k\in K} (xk^*\cap x_0s)$, and this union is disjoint.
Since $(x_0,x)\in s$, $|xk^*\cap x_0s|=a_{sk}^s$, so $|\phi^{-1}(y)\cap x_0s|=\sum_{k\in K} a_{sk}^s
=a_{sK}^s$.  We let $n_s^\phi=a_{sK}^s$.
\end{proof}

\begin{cor}\label{cor:calculation} Suppose $S$ and $T$ are schemes on finite sets $X$ and $Y$, and 
$\phi\in \Hom_\Sa(S,T)$.  Then for each $s\in S$, we have $n_s^\phi=\frac{n_s}{n_{\phi(s)}}$.
\end{cor}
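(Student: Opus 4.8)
The plan is to turn the statement into a fiber-counting argument over a single geometric coset. Fix any $x_0\in X$ and consider the geometric coset $x_0s$, which by Definition \ref{def:valency} has exactly $n_s$ elements. First I would observe that $\phi$ restricts to a map $x_0s\to \phi(x_0)\phi(s)$: if $x\in x_0s$, then $(x_0,x)\in s$, so $(\phi(x_0),\phi(x))\in\phi(s)$, i.e.\ $\phi(x)\in\phi(x_0)\phi(s)$. The codomain $\phi(x_0)\phi(s)$ is a geometric coset of $\phi(s)$ in $T$, and hence has exactly $n_{\phi(s)}$ elements.

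Next I would use Lemma \ref{lem:relval} to control this restricted map. On the one hand, admissibility (as recorded in the opening line of the proof of Lemma \ref{lem:relval}) shows that every $y$ with $(\phi(x_0),y)\in\phi(s)$, that is, every $y\in\phi(x_0)\phi(s)$, has at least one preimage in $x_0s$; hence the restricted map is surjective. On the other hand, Lemma \ref{lem:relval} asserts that each fiber $\phi^{-1}(y)\cap x_0s$ has exactly $n_s^\phi$ elements, a count that is independent of $y$.

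Finally I would compute $|x_0s|$ by summing over fibers. Since each element of $x_0s$ lies in exactly one fiber, indexed by $y=\phi(x)\in\phi(x_0)\phi(s)$, the sets $\phi^{-1}(y)\cap x_0s$ partition $x_0s$ as $y$ ranges over the $n_{\phi(s)}$ elements of $\phi(x_0)\phi(s)$. Each block has size $n_s^\phi$, so $n_s=|x_0s|=n_{\phi(s)}\,n_s^\phi$, which rearranges to $n_s^\phi=n_s/n_{\phi(s)}$.

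I expect no serious obstacle here; the one point deserving care is verifying that the fibers genuinely partition $x_0s$, equivalently that the restricted map lands in $\phi(x_0)\phi(s)$ and is onto it, since this is precisely what lets me upgrade the uniform local count of Lemma \ref{lem:relval} to the global count of $x_0s$. As a byproduct, the identity $n_s=n_{\phi(s)}\,n_s^\phi$ shows that $n_{\phi(s)}$ divides $n_s$.
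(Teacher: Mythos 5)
Your proposal is correct and is essentially the paper's own argument: the paper also fixes $x_0$, counts the $n_s\cdot 1$ pairs $(x,\phi(x))$ with $x\in x_0s$ in two ways, and obtains $n_s=n_{\phi(s)}\cdot n_s^\phi$ by grouping those pairs according to $y=\phi(x)$ ranging over the $n_{\phi(s)}$ elements with $(\phi(x_0),y)\in\phi(s)$ and applying Lemma \ref{lem:relval} to each group. Your fiber-partition phrasing is just a repackaging of that double count (note only that $x_0s$ is not literally a ``geometric coset'' in the paper's sense, which reserves that term for $xT$ with $T$ closed).
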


\begin{proof} Choose $x_0\in X$.  Then there are $n_{\phi(s)}$ elements $y\in Y$
such that $(\phi(x_0),y)\in \phi(s)$.  By Lemma \ref{lem:relval}, for each such element,
there are $n_s^\phi$ elements $x\in X$ such that $\phi(x)=y$ and $(x_0,x)\in s$.
Thus, there are $n_s^\phi\cdot n_{\phi(s)}$ pairs $(x,y)\in X\times Y$ such that
$(x_0,x)\in s$, $\phi(x)=y$, and $(\phi(x_0),y)\in \phi(s)$.

On the other hand, there are $n_s$ elements $x\in X$ such that $(x_0,x)\in s$.
For each such element, there is exactly one $y\in Y$ such that $\phi(x)=y$.
Thus, there are $n_s$ pairs $(x,y)\in X\times Y$ such that $(x_0,x)\in s$
and $\phi(x)=y$.  But these two conditions imply $(\phi(x_0),y)\in \phi(s)$.
Thus, $n_s=n_{\phi(s)}\cdot n_s^\phi$, proving the claim.
\end{proof}

\begin{cor}\label{cor:thin} Suppose $S$ and $T$ are schemes on finite sets $X$ and $Y$, and $\phi\in \Hom_\Sa(S,T)$.
If $s\in S$ is thin, $\phi(s)\in T$ is thin.
\end{cor}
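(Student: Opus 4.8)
The plan is to reduce the statement entirely to a computation of valencies. By the observation following Definition \ref{def:thin}, an element of a scheme is thin precisely when its valency equals $1$. So it suffices to show that $n_{\phi(s)} = 1$ whenever $n_s = 1$, and the problem becomes purely numerical.

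The key tool will be Corollary \ref{cor:calculation}, which asserts $n_s^\phi = n_s / n_{\phi(s)}$, or equivalently $n_s = n_{\phi(s)} \cdot n_s^\phi$. First I would invoke Lemma \ref{lem:relval} to record that $n_s^\phi$ is a \emph{positive integer}; this integrality is the crucial input, because it forces $n_{\phi(s)}$ to divide $n_s$. Now suppose $s$ is thin, so that $n_s = 1$. Then $n_{\phi(s)}$ divides $1$, and since every valency is strictly positive (as noted after Definition \ref{def:valency}), we must have $n_{\phi(s)} = 1$. Hence $\phi(s)$ is thin, as required.

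I expect no genuine obstacle here: once Corollary \ref{cor:calculation} and the integrality supplied by Lemma \ref{lem:relval} are available, the argument is immediate. The only point deserving care is that one genuinely needs the integrality (and positivity) of $n_s^\phi$, not merely the multiplicative relation $n_s = n_{\phi(s)} \cdot n_s^\phi$; without the integrality constraint one could not rule out a fractional $n_s^\phi$ accompanied by some $n_{\phi(s)} > n_s$, so it is precisely the divisibility consequence of Lemma \ref{lem:relval} that closes the argument.
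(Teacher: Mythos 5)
Your proposal is correct and follows essentially the same route as the paper, which also deduces $n_{\phi(s)}=1$ from the identity $n_{\phi(s)}\cdot n_s^\phi=n_s=1$ of Corollary \ref{cor:calculation}. You merely make explicit the positivity and integrality of $n_s^\phi$ from Lemma \ref{lem:relval}, which the paper's one-line proof uses implicitly.
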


\begin{proof}
By Corollary \ref{cor:calculation}, if $s\in S$ is thin, then
$n_{\phi(s)}\cdot n_s^\phi=n_s=1$, so $n_{\phi(s)}=1$, whence $\phi(s)$ is thin.
\end{proof}

\begin{lem}\label{lem:to-thin} Suppose $S$ and $T$ are schemes on finite sets $X$ and $Y$, and suppose
$T$ is thin.  Then any morphism $\phi$ from $S$ to $T$ is admissible.
In this case, $\ker(\phi)$ contains $O^{\vartheta}(S)$.
\end{lem}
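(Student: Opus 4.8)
The plan is to treat the two assertions separately, exploiting throughout that every element of the thin scheme $T$ has valency $1$ (Definition \ref{def:thin} and the sentence following it). I would also record at the outset the two morphism facts that drive everything: any morphism satisfies $\phi(p^*)=\phi(p)^*$, and if $a_{pq}^r>0$ then $\phi(r)\in\phi(p)\phi(q)$ (produce a witnessing triple $(x,y,z)$ realizing the structure constant and push it forward through $\phi$).

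For admissibility, suppose $x\in X$, $y\in Y$, $s\in S$ with $(\phi(x),y)\in\phi(s)$. Since $n_s\geq 1$, I can choose some $x'\in X$ with $(x,x')\in s$; then $(\phi(x),\phi(x'))\in\phi(s)$ because $\phi$ is a morphism. Now $\phi(s)$ is thin, so it has valency $1$, meaning there is a \emph{unique} $y''\in Y$ with $(\phi(x),y'')\in\phi(s)$. Both $y$ and $\phi(x')$ play that role, so $y=\phi(x')$, and $x'$ is exactly the element required by Definition \ref{def:admissible}. This is the easy half.

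For the second assertion I would show that $K:=\ker\phi$ is strongly normal; since $O^{\vartheta}(S)$ is by definition (Remark \ref{rem:intersection}) the intersection of all strongly normal closed subsets, this at once gives $O^{\vartheta}(S)\subseteq K$. Closedness of $K$ is already recorded in \cite[Lemma 5.1.2(ii)]{PHZ}, so the task is to verify $p^*Kp=K$ for every $p\in S$. Because $T$ is thin, $\phi(p)\phi(q)$ is always a single element, and in particular $\phi(p)^*\phi(p)=\{1\}$. Chaining the two morphism facts through the complex product $p^*kp$ for $k\in K$ shows that every element $r\in p^*Kp$ satisfies $\phi(r)\in\phi(p^*)\phi(k)\phi(p)=\phi(p)^*\phi(p)=\{1\}$, i.e. $p^*Kp\subseteq K$.

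The main obstacle is upgrading this inclusion to the equality demanded by strong normality. Applying the inclusion with $p^*$ in place of $p$ also yields $pKp^*\subseteq K$. Then, using that $1\in p^*p$ (so that $A\subseteq(p^*p)A(p^*p)$ for any $A\subseteq S$) together with associativity of the complex product, I would compute
\[
K\subseteq (p^*p)\,K\,(p^*p)=p^*\,(pKp^*)\,p\subseteq p^*Kp\subseteq K,
\]
forcing $p^*Kp=K$. This gives strong normality of $K$ and hence $O^{\vartheta}(S)\subseteq\ker\phi$. The one point to handle carefully is the bookkeeping of the iterated complex products: I must make sure the monotonicity and associativity steps are applied to honest subsets of $S$ rather than to individual scheme elements, and that $1\in p^*p$ is justified from $a_{p^*p}^1=n_{p^*}=n_p>0$.
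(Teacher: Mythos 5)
Your argument for admissibility is exactly the paper's: pick any $x'\in xs$, note $(\phi(x),\phi(x'))\in\phi(s)$, and use $n_{\phi(s)}=1$ to force $y=\phi(x')$. For the second assertion, however, you take a genuinely different route. The paper shows $s^*s\subseteq\ker\phi$ for every $s\in S$ (since $\phi(t)\in\phi(s)^*\phi(s)=\{1\}$ for $t\in s^*s$) and then invokes Zieschang's Theorem 3.2.1(ii), which says that $O^{\vartheta}(S)$ is the closed subset generated by $\bigcup_{s\in S}s^*s$; closedness of $\ker\phi$ finishes it. You instead prove that $K=\ker\phi$ is itself strongly normal and appeal only to the paper's own definition of $O^{\vartheta}(S)$ as the intersection of all strongly normal closed subsets (Remark \ref{rem:intersection}). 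Your verification is sound: the inclusion $p^*Kp\subseteq K$ follows from pushing complex products through $\phi$ and thinness of $T$, and the upgrade to equality via $K\subseteq(p^*p)K(p^*p)=p^*(pKp^*)p\subseteq p^*Kp\subseteq K$ is correct, using $1\in p^*p$ (from $a_{p^*p}^1=n_{p^*}=n_p>0$), $\{1\}K\{1\}=K$, and monotonicity and associativity of the complex product on $\Pp(S)$. The trade-off: the paper's proof is shorter but leans on an external generation result for the thin residue, while yours is self-contained relative to the definitions actually given in the paper and yields the slightly stronger by-product that the kernel of any morphism to a thin scheme is strongly normal. Both are valid.
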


\begin{proof}
Suppose $(\phi(x), y)\in \phi(s)$ for some $x\in X$, $y\in Y$, and $s\in S$.  Choose any $x'\in xs$.
Then $(\phi(x), \phi(x'))\in \phi(s)$ since $\phi$ is a morphism.  But $\phi(s)$
is thin, so this implies $y=\phi(x')$, which proves the first claim.

If $s\in S$ and $t\in s^*s$ then $\phi(t)\in \phi(s^*)\phi(s)=\phi(s)^*\phi(s)=1$ since $\phi(s)$ is thin.
Thus, $s^*s\subseteq \ker(\phi)$ for any $s\in S$.  But $O^{\vartheta}(S)$ is generated by the union
of the sets $s^*s, s\in S$; cf. \cite[Theorem 3.2.1.(ii)]{PHZ}.  In particular, 
since $\ker(\phi)$ is closed, $O^{\vartheta}(S)\subseteq \ker(\phi)$.

\end{proof}

\begin{cor}\label{cor:Sfunctor}
Given a finite group $G$, let $S(G)$ denote the thin scheme on the set $G$ defined in Example \ref{ex:group}.
Given a group homomorphism $\phi:G\to H$, let $S(\phi):G\cup S(G)\to H\cup S(H)$ denote the function taking
$g\in G$ to $\phi(g)\in H$ and taking $g\tilde{\:}$ to $\phi(g)\tilde{\:}$.  Then $S(\phi)$ is an admissible
morphism from the scheme $S(G)$ to the scheme $S(H)$.  Moreover, $S$ is a functor from the category
of finite groups to ${\Sa}$.
\end{cor}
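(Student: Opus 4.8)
The plan is to verify three things in turn: that $S(\phi)$ is a morphism of schemes, that it is admissible, and that $S$ respects identities and composition. The key shortcut is that admissibility will require no direct verification: because $S(H)$ is a thin scheme, Lemma \ref{lem:to-thin} guarantees that every morphism into $S(H)$ is automatically admissible. So the real content is reduced to checking that $S(\phi)$ is a morphism in the sense of Definition \ref{def:morphism}, together with the two functoriality identities.

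First I would check Definition \ref{def:morphism}. The containment conditions $S(\phi)(G)\subseteq H$ and $S(\phi)(S(G))\subseteq S(H)$ are immediate from the definition of $S(\phi)$. For the relation-preserving condition, suppose $g\tilde{\:}\in S(G)$ and $(g_1,g_2)\in g\tilde{\:}$, so that $g_2=g_1g$ by Example \ref{ex:group}. Since $\phi$ is a group homomorphism, $S(\phi)(g_2)=\phi(g_1g)=\phi(g_1)\phi(g)$, and as $S(\phi)(g\tilde{\:})=\phi(g)\tilde{\:}$, we get $(\phi(g_1),\phi(g_1)\phi(g))\in\phi(g)\tilde{\:}$ directly from the defining description of $\phi(g)\tilde{\:}$. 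This shows $S(\phi)$ is a morphism. Since $S(H)$ is thin, Lemma \ref{lem:to-thin} then immediately gives that $S(\phi)$ is admissible, i.e. $S(\phi)\in\Hom_\Sa(S(G),S(H))$.

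Finally, for functoriality I would simply compute on both the underlying set and the scheme elements. For the identity, $S(\id_G)$ fixes each $g$ and each $g\tilde{\:}$, so $S(\id_G)=\id_{S(G)}$. For composition, given $\phi:G\to H$ and $\psi:H\to K$, both $S(\psi\circ\phi)$ and $S(\psi)\circ S(\phi)$ send $g$ to $\psi(\phi(g))$ and $g\tilde{\:}$ to $\psi(\phi(g))\tilde{\:}$, so they agree; by Remark \ref{rem:determine} agreement on the underlying set would in fact already force agreement on scheme elements. There is no genuine obstacle in this argument — the only point worth flagging is that one should invoke Lemma \ref{lem:to-thin} rather than re-derive admissibility of $S(\phi)$ by hand, since doing so keeps the proof clean and highlights why admissibility is the right notion for this construction.
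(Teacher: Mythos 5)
Your proposal is correct and follows essentially the same route as the paper: verify that $S(\phi)$ is a morphism via the identity $\phi(g_1 g)=\phi(g_1)\phi(g)$, deduce admissibility from Lemma \ref{lem:to-thin} because $S(H)$ is thin, and check the functoriality identities by direct computation. No differences worth noting.
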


\begin{proof} If $(g_1, g_2)\in g\tilde{\:}$, then $g_2=g_1g$, so $\phi(g_2)=\phi(g_1)\phi(g)$, whence
$(\phi(g_1),\phi(g_2))\in \phi(g)\tilde{\:}$.  That is, $(S(\phi)(g_1), S(\phi)(g_2))\in S(\phi)(g\tilde{\:})$,
meaning $S(\phi)$ is a morphism of schemes.  Since $S(H)$ is thin, $S(\phi)$ is admissible
by Lemma \ref{lem:to-thin}.  Now, it is immediate from the definitions that for a finite group $G$,
$S(\id_G)=\id_{S(G)}$ and if $\phi:G\to H$ and $\psi:H\to K$ are group homomorphisms, then
$S(\psi\circ\phi)=S(\psi)\circ S(\phi)$.
\end{proof}

\begin{defn}\label{def:SA}
Let $S_a$ denote the functor $F\circ S$ from the category of groups to ${\Sal}$.
(See Definition \ref{def:Salg}).
\end{defn}

\section{First isomorphism theorem}\label{sec:first}

In this section, we prove that an analogue of the first isomorphism theorem for groups holds in $\Sa$;
see Theorem \ref{thm:first-isomorphism}.  We recall that Zieschang proved
(\cite[Lemma 5.1.5]{PHZ}) that if $\phi$ is a morphism from a scheme $S$ on $X$, with kernel $K$,
then $\phi$ factors through a morphism from $S\dm K$ on $X/K$.  He also proved
(\cite[Theorem 5.3.2]{PHZ}) that if $\phi$ is a homomorphism from a scheme $S$ on $X$,
with kernel $K$, then $\phi$ factors through an injective homomorphism from the scheme
$S\dm K$ on $X/K$.  Here, we show that an admissible morphism $\phi$
from $S$ to $T$, with kernel $K$, factors through an isomorphism from the scheme
$S\dm K$ to a subscheme of $T$, the image of $\phi$.

First, we need to define the image of an admissible morphism.
Indeed, if $\phi$ is just a morphism from a scheme $S$ on $X$ to a scheme $T$ on $Y$, 
then $\phi(X)$ is not in general a coset of a closed subset of $T$, so the image of a
morphism is not itself a scheme.  However, we show in Lemma \ref{lem:image} that if $\phi$ is an admissible morphism,
then $\phi(X)$ is a coset of $\phi(S)$.  We then show in Lemma \ref{lem:normal}
that the kernel of an admissible morphism from $S$ to $T$ is a normal closed subset of $S$, just
as the kernel of a homomorphism of groups is a normal subgroup.  Finally, we show that
an admissible morphism $\phi$ from $S$ to $T$ induces an isomorphism
$\bar\phi:S\dm K\to \im\phi$ such that $\phi=\io_{\phi(X)}\circ \bar\phi\circ \pi_{K}$; that is,
the following diagram commutes:

\[\xymatrix
{
{S}
\ar[rr]^-{\phi}
\ar[d]_-{\pi_K} &&
{T}\\
{S\dm K} \ar[rr]^-{\bar\phi} &&
{\im\phi}\ar[u]_{\io_{\phi(X)}}
}
\]

If $S$ and $T$ are schemes on $X$ and $Y$, and $\phi\in \Hom_{\Sa}(S,T)$,
then for $R\subseteq S$, we let $\phi(R)=\{\phi(r):r\in R\}$, and for $W\subseteq X$, we let
$\phi(W)=\{\phi(w):w\in W\}$.

\begin{lem}\label{lem:image} Suppose $S$ and $T$ are schemes on finite sets $X$ and $Y$ respectively, and
$\phi\in \Hom_{\Sa}(S,T)$.  If $R\subseteq S$ is a closed subset, then $\phi(R)$ is a closed subset of $T$, and
for any geometric coset $W$ of $R$, $\phi(W)$ is a geometric coset of $\phi(R)$.
In particular, $\phi(S)$ is a closed subset in $T$, and $\phi(X)$ is a geometric coset of $\phi(S)$.
\end{lem}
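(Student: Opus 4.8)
The plan is to deduce both displayed consequences from the general statement about a closed subset $R\subseteq S$, establishing first that $\phi(R)$ is a closed subset of $T$ and then identifying $\phi(W)$ as a geometric coset of $\phi(R)$. Throughout I would use two facts repeatedly: that $\phi$ is a morphism, so $(x_1,x_2)\in s$ forces $(\phi(x_1),\phi(x_2))\in\phi(s)$; and that $\phi$ is admissible (Definition \ref{def:admissible}), which lets me lift a relation $(\phi(x),y)\in\phi(s)$ to a genuine $s$-related point $x'$ over $y$. First I would record the easy bookkeeping: a morphism sends the diagonal to the diagonal, so $\phi(1)=1$, and since $R$ is closed it contains $1$; hence $1\in\phi(R)$. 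A short computation with the structure constants then gives $\{u\}\{1\}=\{u\}$ for every $u$, so $\phi(R)\subseteq\phi(R)\phi(R)$, which is one inclusion of closedness for free.

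The heart of the argument is the reverse inclusion $\phi(R)\phi(R)\subseteq\phi(R)$, i.e.\ showing that every $u\in\phi(p)\phi(q)$ with $p,q\in R$ has the form $\phi(r)$ for some $r\in pq$. Here I would use a lifting argument rooted inside $\phi(X)$. Fix any $x_0\in X$ and set $y_1=\phi(x_0)$. Since $n_u>0$ (Definition \ref{def:valency}), the set $y_1u$ is nonempty, so I may choose $y_3$ with $(y_1,y_3)\in u$; and because $a_{\phi(p)\phi(q)}^u>0$, the scheme axiom (Definition \ref{def:scheme}, item 4) applied to this representative produces $y_2$ with $(y_1,y_2)\in\phi(p)$ and $(y_2,y_3)\in\phi(q)$. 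Now admissibility applied to $(\phi(x_0),y_2)\in\phi(p)$ yields $x_1$ over $y_2$ with $(x_0,x_1)\in p$, and applied again to $(\phi(x_1),y_3)\in\phi(q)$ yields $x_2$ over $y_3$ with $(x_1,x_2)\in q$. Letting $r\in S$ be the element containing $(x_0,x_2)$, the triangle $x_0,x_1,x_2$ shows $a_{pq}^r>0$, so $r\in pq\subseteq RR=R$; and $(\phi(x_0),\phi(x_2))=(y_1,y_3)$ lies in both $\phi(r)$ and $u$, forcing $u=\phi(r)\in\phi(R)$. This establishes closedness.

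For the coset statement I would prove the identity $\phi(xR)=\phi(x)\phi(R)$ for an arbitrary subset $R\subseteq S$ (closedness is not needed here): the inclusion $\subseteq$ is just the morphism property, while $\supseteq$ is precisely one application of admissibility, lifting $(\phi(x),z)\in\phi(r)$ to a point of $xR$ lying over $z$. Granting that $\phi(R)$ is closed, the set $\phi(x)\phi(R)$ is by Definition \ref{def:gcoset} the geometric coset of $\phi(R)$ containing $\phi(x)$, so $\phi(W)=\phi(xR)$ is a geometric coset of $\phi(R)$. The ``in particular'' claims follow by taking $R=S$: then $\phi(S)$ is closed, and since $x_0S=X$ for every $x_0\in X$, the identity gives $\phi(X)=\phi(x_0)\phi(S)$, the geometric coset of $\phi(S)$ containing $\phi(x_0)$.

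I expect the main obstacle to be the reverse inclusion for closedness, and specifically the point that one must root the lifted triangle at a vertex of $\phi(X)$: the abstract hypothesis $u\in\phi(p)\phi(q)$ only guarantees a completing triangle somewhere in $Y$, which need not meet $\phi(X)$, whereas admissibility can only be invoked starting from an image $\phi(x_0)$. The observation that $n_u>0$ lets me place the base vertex at any prescribed point $\phi(x_0)$, and this is exactly what makes admissibility applicable and unlocks the rest of the argument.
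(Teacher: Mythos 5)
Your proposal is correct and follows essentially the same route as the paper: closedness is proved by rooting a completing triangle at a point of $\phi(X)$ and lifting its two edges via admissibility, and the coset identity $\phi(xR)=\phi(x)\phi(R)$ is proved with the morphism property for one inclusion and one application of admissibility for the other. Your explicit remarks that $n_u>0$ lets you base the triangle at $\phi(x_0)$, and that $1\in\phi(R)$ gives the inclusion $\phi(R)\subseteq\phi(R)\phi(R)$, are points the paper leaves implicit.
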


\begin{proof} Suppose $R$ is closed in $S$, and
suppose $t, u\in \phi(R)$, so $t=\phi(p)$ and $u=\phi(q)$ for some $p, q\in R$.
Choose some $x\in X$, and let $y=\phi(x)$.  Suppose $v\in tu$.
Then for some $y', y''\in Y$, we have $(y,y')\in t$, $(y',y'')\in u$, and
$(y,y'')\in v$.  Since $(\phi(x),y')\in \phi(p)$, admissibility of $\phi$
implies that there is an $x'\in X$ such that $\phi(x')=y'$ and $(x,x')\in p$.  Similarly, since $(\phi(x'),y'')\in\phi(q)$, 
there is an $x''\in X$ such that $\phi(x'')=y''$ and $(x',x'')\in q$.  Now, $(x,x'')\in r$ for some $r\in pq$, and
since $R$ is closed, $r\in R$.
Since $\phi$ is a morphism, $(y,y'')=(\phi(x), \phi(x''))\in \phi(r)$, so $\phi(r)=v$.  Thus, $v\in \phi(R)$.  Since
$v$ was an arbitrary element in $tu$, we have $tu\subseteq \phi(R)$.  Since $t$ and $u$ were arbitrary
elements in $\phi(R)$, $\phi(R)$ is closed.

Now suppose $x\in W$, and again let $y=\phi(x)$.  We claim
that $\phi(W)=y(\phi(R))$.  Given $y'\in \phi(W)$, we have $y'=\phi(x')$ for some $x'\in W$.
Since $x, x'\in W$ and $W$ is a geometric coset of $R$,
we have $(x,x')\in r$ for some $r\in R$, so $(y,y')=(\phi(x),\phi(x'))\in \phi(r)$.  That is, $y'\in y(\phi(R))$, so
$\phi(W)\subseteq y(\phi(R))$.  Given $y'\in y(\phi(R))$, we have $(\phi(x),y')=(y,y')\in \phi(r)$ for some $r\in R$.
By admissibility of $\phi$, there is an $x'\in X$ such that $\phi(x')=y'$ and $(x,x')\in r$.  Since
$x'\in xr$, $x\in W$, and $W$ is a coset of $R$, we have $x'\in W$, so $y'\in \phi(W)$.  Thus,
$y(\phi(R))\subseteq \phi(W)$.
\end{proof}

\begin{notn} Suppose $S$ and $T$ are schemes on $X$ and $Y$ respectively, and $\phi\in \Hom_\Sa(S,T)$.
Let $U=\phi(S)$ and $Z=\phi(X)$.
By Lemma \ref{lem:image}, $U$ is closed in $T$, and $Z$ is a coset of $U$,
so for $x\in X$, $U_{Z}$ is a scheme on $Z$.  We will denote this scheme $\im\phi$.  Thus,
as a set, $\im\phi=\{\phi(s)_Z:s\in S\}$.
\end{notn}

\begin{lem}\label{lem:normal} Suppose $S$ and $T$ are schemes on finite sets
$X$ and $Y$ respectively, and $\phi\in
\Hom_\Sa(S,T)$.  Then $\ker\phi$ is a normal closed subset of $S$.
\end{lem}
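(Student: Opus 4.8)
The plan is to show that the kernel $K := \ker\phi$, which is already known to be closed, is normal in the sense of Definition \ref{def:normal}; that is, $pK = Kp$ for every $p \in S$. The strategy is to identify both the right coset $pK$ and the left coset $Kp$ with the single fiber $\{r \in S : \phi(r) = \phi(p)\}$ lying over $\phi(p)$, and then conclude equality by transitivity.

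First I would record the elementary fact that $\phi(pq) \subseteq \phi(p)\phi(q)$ for all $p, q \in S$: a defining configuration $(x,y) \in p$, $(y,z) \in q$, $(x,z) \in r$ maps under $\phi$ to $(\phi(x),\phi(y)) \in \phi(p)$, $(\phi(y),\phi(z)) \in \phi(q)$, $(\phi(x),\phi(z)) \in \phi(r)$, using Remark \ref{rem:determine}, so $\phi(r) \in \phi(p)\phi(q)$. Taking the second factor to be $k \in K$, so that $\phi(k) = 1$, and using $\phi(p)\{1\} = \{\phi(p)\}$, this gives $\phi(r) = \phi(p)$ for every $r \in pK$; the same argument with $k \in K$ on the left gives $\phi(r) = \phi(p)$ for every $r \in Kp$. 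Hence both cosets are contained in the fiber, and no admissibility is needed for these inclusions.

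The substantive step is the reverse containment, and this is where admissibility enters. Given $r$ with $\phi(r) = \phi(p)$, choose $(x,z) \in r$, so that $(\phi(x),\phi(z)) \in \phi(p)$. Admissibility (Definition \ref{def:admissible}) at the base point $x$ produces $x'$ with $\phi(x') = \phi(z)$ and $(x,x') \in p$; since $\phi(x') = \phi(z)$, the element $k$ with $(x',z) \in k$ satisfies $\phi(k) = 1$ (its image contains the diagonal pair $(\phi(z),\phi(z))$, hence equals $1$), exactly as in the proof of Lemma \ref{lem:relval}, so $k \in K$. The configuration $(x,x') \in p$, $(x',z) \in k$, $(x,z) \in r$ then witnesses $a_{pk}^r > 0$, so $r \in pK$. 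This establishes $pK = \{r : \phi(r) = \phi(p)\}$.

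For the left coset I would run the transposed argument. Since $\phi$ respects adjoints, $\phi(p^*) = \phi(p)^*$, so from $(\phi(z),\phi(x)) \in \phi(p^*)$ admissibility at the base point $z$ yields $z'$ with $\phi(z') = \phi(x)$ and $(z,z') \in p^*$, hence $(z',z) \in p$; the element $k'$ with $(x,z') \in k'$ again lies in $K$, and the configuration $(x,z') \in k'$, $(z',z) \in p$, $(x,z) \in r$ gives $r \in Kp$. Thus $Kp$ equals the same fiber, and therefore $pK = Kp$, establishing normality. I expect the main obstacle to be handling the left coset cleanly: admissibility is an inherently \emph{forward} condition on $\phi$, so the key maneuver is to transpose via $p^*$ together with the identity $\phi(p^*) = \phi(p)^*$, rather than trying to invoke admissibility in a direction it does not directly supply.
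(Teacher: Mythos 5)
Your proof is correct. The only point that needs care --- producing the kernel element $k'$ witnessing $r\in Kp$ when admissibility only lets you extend pairs ``forward'' from a given base point --- is handled properly by passing to $p^*$ and using $\phi(p^*)=\phi(p)^*$, and the auxiliary facts you use ($\phi(pq)\subseteq\phi(p)\phi(q)$, and that an element whose image contains a diagonal pair lies in the kernel) are all verified correctly.

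The route differs from the paper's mainly in packaging. The paper proves the two inclusions $sK\subseteq Ks$ and $Ks\subseteq sK$ directly: given $p\in sK$, it first deduces from a configuration $(x_1,x_2)\in s$, $(x_2,x_3)\in k$, $(x_1,x_3)\in p$ that $(\phi(x_1),\phi(x_3))\in\phi(s)$, then applies admissibility to $s^*$ at $x_3$ to manufacture $k'\in K$ with $p\in k's$. You instead prove the stronger statement that $pK$ and $Kp$ each equal the fiber $\{r\in S:\phi(r)=\phi(p)\}$, and conclude $pK=Kp$ by transitivity. The engine is identical --- admissibility applied at one endpoint of a pair in $r$, to $p$ or to $p^*$, producing a kernel element --- but your organization buys the explicit identification of the $K$-cosets in $S$ with the fibers of $\phi$ on $S$, which is essentially the combinatorial content underlying Remark \ref{rem:quotient} and the injectivity of the induced map in Lemma \ref{lem:induced}. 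The paper's version is marginally shorter since it only needs one admissibility application per inclusion rather than establishing both containments with the fiber for each coset.
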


\begin{proof} Let $K=\ker\phi$.  By \cite[Lemma 5.1.2(ii)]{PHZ}, $K$ is closed.
Now, suppose $s\in S$, and suppose $p\in sK$.  Then $p\in sk$ for some $k\in K$,
and we can find $x_1, x_2, x_3$ such that $(x_1,x_3)\in p$, $(x_1, x_2)\in s$ and $(x_2, x_3)\in k$.
Let $y_i=\phi(x_i)$ for $1\leq i\leq 3$.  Then $(y_2,y_3)\in \phi(k)=1$, so $y_2=y_3$, and $(y_1, y_2)\in \phi(s)$,
so $(y_1, y_3)\in \phi(s)$.  This implies $(\phi(x_3), y_1)\in \phi(s)^*=\phi(s^*)$.  Then since $\phi$ is admissible,
there is an $x_4\in X$ so that $(x_3,x_4)\in s^*$ and $\phi(x_4)=y_1=\phi(x_1)$.  Then $(x_4, x_3)\in s$ and
$(x_1, x_4)\in k'$ for some $k'\in \ker\phi$.  Therefore, since $(x_1, x_3)\in p$, we have $p\in k's\subseteq Ks$.
Thus, $sK\subseteq Ks$ for any $s\in S$.  By a similar argument, $Ks\subseteq sK$, so $K$ is normal.
\end{proof}

\begin{lem}\label{lem:induced} Suppose $S$ and $T$ are schemes on finite sets $X$ and $Y$ respectively,
$\phi\in \Hom_\Sa(S,T)$, and $K=\ker(\phi)$.  Define $\bar\phi:S\dm K\to \im\phi$ by $\bar\phi(xK)=\phi(x)$
and $\bar\phi(s^K)=\phi(s)_{\phi(X)}$.  Then $\bar\phi$ is an isomorphism of schemes.
\end{lem}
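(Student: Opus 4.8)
The plan is to verify that $\bar\phi$ is a well-defined morphism of schemes and a bijection on both points and scheme elements; since a morphism is an isomorphism precisely when it is a bijection (as noted after Definition \ref{def:morphism}), this suffices. The whole argument rests on one elementary observation about points: for $x, x' \in X$, one has $\phi(x) = \phi(x')$ if and only if $xK = x'K$. Indeed, if $x' \in xK$ then $(x,x') \in k$ for some $k \in K$, so $(\phi(x), \phi(x')) \in \phi(k) = 1$ and $\phi(x) = \phi(x')$; conversely, if $\phi(x) = \phi(x')$ and $s$ is the element of $S$ with $(x,x') \in s$, then $(\phi(x), \phi(x')) \in \phi(s) \cap 1$, forcing $\phi(s) = 1$, i.e. $s \in K$, whence $xK = x'K$. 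This immediately gives both that $xK \mapsto \phi(x)$ is well-defined and that $\bar\phi$ is injective on points; surjectivity on points is clear since $\phi(X)$ is exactly the point set of $\im\phi$.

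For the scheme-element map, I would first check it is well-defined. Suppose $s^K = (s')^K$. Pick $x_0 \in X$ and $x_1 \in x_0 s$ (possible since $n_s > 0$); then $(x_0 K, x_1 K) \in s^K = (s')^K$, so there are $x_0' \in x_0 K$ and $x_1' \in x_1 K$ with $(x_0', x_1') \in s'$. By the point observation $\phi(x_0') = \phi(x_0)$ and $\phi(x_1') = \phi(x_1)$, so the pair $(\phi(x_0), \phi(x_1))$ lies in both $\phi(s)$ and $\phi(s')$; since $T$ is a partition, $\phi(s) = \phi(s')$, hence $\phi(s)_{\phi(X)} = \phi(s')_{\phi(X)}$. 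The morphism property is equally direct: if $(x_1 K, x_2 K) \in s^K$, choose $x_1' \in x_1 K$, $x_2' \in x_2 K$ with $(x_1', x_2') \in s$; then $(\phi(x_1'), \phi(x_2')) \in \phi(s)$, and replacing each $\phi(x_i')$ by $\phi(x_i)$ via the point observation shows $(\phi(x_1), \phi(x_2)) \in \phi(s) \cap (\phi(X) \times \phi(X)) = \phi(s)_{\phi(X)} = \bar\phi(s^K)$. Surjectivity on scheme elements holds because every element of $\im\phi$ is by definition of the form $\phi(s)_{\phi(X)} = \bar\phi(s^K)$.

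The one step that genuinely uses admissibility — and which I expect to be the crux — is injectivity on scheme elements. Suppose $\phi(s_1)_{\phi(X)} = \phi(s_2)_{\phi(X)}$. Again take $x_0 \in X$ and $x_1 \in x_0 s_1$, so $(\phi(x_0), \phi(x_1)) \in \phi(s_1)_{\phi(X)} = \phi(s_2)_{\phi(X)} \subseteq \phi(s_2)$. Here admissibility of $\phi$ produces an $x_1' \in X$ with $\phi(x_1') = \phi(x_1)$ and $(x_0, x_1') \in s_2$; by the point observation $x_1' K = x_1 K$. Now $(x_0 K, x_1 K) \in s_1^K$ (from $(x_0, x_1) \in s_1$) and $(x_0 K, x_1 K) = (x_0 K, x_1' K) \in s_2^K$ (from $(x_0, x_1') \in s_2$), so the single pair $(x_0 K, x_1 K)$ lies in both $s_1^K$ and $s_2^K$; since $S \dm K$ is a partition, $s_1^K = s_2^K$. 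Having shown $\bar\phi$ is a well-defined, bijective morphism, I conclude it is an isomorphism. One could alternatively route the scheme-element arguments through Remark \ref{rem:quotient}, replacing $s^K = (s')^K$ by $KsK = Ks'K$, but the partition argument above seems cleaner and avoids invoking the normality of $K$ a second time.
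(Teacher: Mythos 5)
Your proof is correct, but it takes a different route from the paper's. The paper disposes of the lemma in two lines: by Lemma \ref{lem:admissiblehom} an admissible morphism is a homomorphism in Zieschang's sense, so \cite[Theorem 5.3.2]{PHZ} already supplies the injective homomorphism $xK\mapsto\phi(x)$, $s^K\mapsto\phi(s)$; surjectivity onto $\im\phi$ is then immediate and the bijective morphism is an isomorphism. You instead verify everything from scratch: the key point observation that $\phi(x)=\phi(x')$ iff $xK=x'K$, well-definedness and bijectivity on points, well-definedness and the morphism property on scheme elements via partition arguments, and injectivity on scheme elements via admissibility. All of these steps check out (and your identification of injectivity on scheme elements as the one place where admissibility — or really just the weaker homomorphism property — is genuinely needed is exactly where the content of Zieschang's Theorem 5.3.2 lies). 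What the paper's approach buys is brevity by outsourcing the work to the literature; what yours buys is a self-contained argument that makes transparent precisely which hypothesis is used where, at the cost of redoing a known result. Either is acceptable; if you wanted to match the paper's economy you could simply cite \cite[Theorem 5.3.2]{PHZ} after invoking Lemma \ref{lem:admissiblehom}.
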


\begin{proof}
By Lemma \ref{lem:admissiblehom}, $\phi$ is a homomorphism, so by \cite[Theorem 5.3.2]{PHZ},
the function taking $xK$ to $\phi(x)\in Y$ and $s^K$ to $\phi(s)\in T$ is an injective homomorphism of schemes.
Thus, $\bar\phi$ is an injective morphism.  Since $\bar\phi$ is clearly surjective, $\bar\phi$ is an isomorphism.
\end{proof}

Note that for $x\in X$,
\[(\io_{\phi(X)}\circ \bar\phi\circ \pi_{K})(x)=(\io_{\phi(X)}\circ\bar\phi)(xK)=\io_{\phi(X)}(\phi(x))=\phi(x),\]
\[(\io_{\phi(X)}\circ \bar\phi\circ \pi_{K})(s)=(\io_{\phi(X)}\circ\bar\phi)(s^K)=
\io_{\phi(X)}\left(\phi(s)_{\phi(X)}\right)=\phi(s).\]
Thus, $\io_{\phi(X)}\circ\bar\phi\circ\pi_K=\phi$.

We summarize Lemmas \ref{lem:image}, \ref{lem:normal}, and \ref{lem:induced} in the following Theorem.

\begin{thm}\label{thm:first-isomorphism} (First Isomorphism Theorem for schemes).
Suppose $S$ and $T$ are schemes on finite sets
$X$ and $Y$ respectively, and $\phi\in \Hom_\Sa(S,T)$.  Let $K=\ker\phi$.
Then
\begin{enumerate}
	\item The kernel $K$ is a normal closed subset of $S$.
	\item The image $\im\phi$ is a subscheme of $T$.
	\item The admissible morphism $\phi$ induces an isomorphism $\bar\phi:S\dm K\to \im\phi$
	such that $\phi=\io_{\phi(X)}\circ \bar\phi\circ \pi_{K}$.  We have $\bar\phi(xK)=\phi(x)$ and
	$\bar\phi(s^K)=\phi(s)_{\phi(X)}$.
\end{enumerate}
\end{thm}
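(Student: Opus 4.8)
The plan is to assemble the three lemmas that immediately precede the statement, since each of the three conclusions is simply a repackaging of one of them, supplemented by the factorization identity verified just above the theorem. No new ideas are required; the substantive work has all been carried out in Lemmas \ref{lem:image}, \ref{lem:normal}, and \ref{lem:induced}, and the proof amounts to checking that these fit together and that the composite $\io_{\phi(X)}\circ\bar\phi\circ\pi_K$ is literally $\phi$.

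For part (1), I would simply invoke Lemma \ref{lem:normal}, which asserts precisely that $\ker\phi$ is a normal closed subset of $S$. For part (2), I would appeal to Lemma \ref{lem:image}: that lemma shows $U:=\phi(S)$ is a closed subset of $T$ and that $Z:=\phi(X)$ is a geometric coset of $U$. By Definition \ref{def:subscheme}, the set $U_Z$ of nonempty subsets of $Z\times Z$ of the form $u\cap(Z\times Z)$ is then a genuine subscheme of $T$, and this is exactly $\im\phi=\{\phi(s)_{\phi(X)}:s\in S\}$. The one point worth flagging is that forming this subscheme requires $\phi(X)$ to be a coset of $\phi(S)$; that hypothesis is supplied precisely by the second conclusion of Lemma \ref{lem:image}.

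For part (3), I would use Lemma \ref{lem:induced}, which produces $\bar\phi:S\dm K\to\im\phi$ with $\bar\phi(xK)=\phi(x)$ and $\bar\phi(s^K)=\phi(s)_{\phi(X)}$ and shows it to be an isomorphism of schemes. It then remains only to verify $\phi=\io_{\phi(X)}\circ\bar\phi\circ\pi_K$, which is a direct computation on both set elements and scheme elements. For $x\in X$ one traces $x\mapsto xK\mapsto\phi(x)\mapsto\phi(x)$ using $\pi_K$ (Lemma \ref{lem:normal2}), $\bar\phi$, and the inclusion $\io_{\phi(X)}$ (Lemma \ref{lem:inclusion}); for $s\in S$ one traces $s\mapsto s^K\mapsto\phi(s)_{\phi(X)}\mapsto\phi(s)$. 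This is exactly the calculation displayed immediately before the theorem statement.

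Because the theorem is a summary, I do not expect a genuine obstacle in proving it as stated; the real difficulty has already been met in the lemmas, namely isolating admissibility as the hypothesis that forces $\phi(X)$ to be a coset (so that the image is a scheme at all) and that forces $\ker\phi$ to be normal, both of which fail for general morphisms as the introduction notes. The only delicate bookkeeping point is to confirm that the target of $\bar\phi$ is literally the subscheme $\im\phi=\phi(S)_{\phi(X)}$, rather than merely an abstractly isomorphic scheme, so that the composite $\io_{\phi(X)}\circ\bar\phi\circ\pi_K$ is well-typed and recovers $\phi$ on the nose.
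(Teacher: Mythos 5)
Your proposal is correct and matches the paper exactly: the theorem is stated as a summary of Lemmas \ref{lem:image}, \ref{lem:normal}, and \ref{lem:induced}, with the factorization $\phi=\io_{\phi(X)}\circ\bar\phi\circ\pi_K$ verified by the same elementwise computation you describe, displayed immediately before the theorem.
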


\begin{cor}\label{cor:first} Suppose $S$ and $T$ are schemes on $X$ and $Y$, and $S'$ and $T'$
are normal closed subsets of $S$ and $T$ respectively.  Now, suppose $\phi$ is an admissible
morphism from $S$ to $T$, and $\phi(S')\subseteq T'$.  Then there is a unique
morphism $\bar\phi:S\dm S'\to T\dm T'$ such that $\bar\phi\circ \pi_{S'}=\pi_{T'}\circ \phi$,
and $\bar\phi$ is admissible.
\end{cor}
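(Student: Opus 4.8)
The plan is to realize $\bar\phi$ as the factorization of the admissible composite $\pi_{T'}\circ\phi$ through the quotient map $\pi_{S'}$, mimicking the universal property of quotients in group theory. First I would set $\psi:=\pi_{T'}\circ\phi$ and observe that $\psi$ is an admissible morphism from $S$ to $T\dm T'$: indeed, $\pi_{T'}$ is admissible by Lemma \ref{lem:normal2}, and composites of admissible morphisms are admissible by Lemma \ref{lem:category}. The hypothesis $\phi(S')\subseteq T'$ then gives the key fact that $\psi$ kills $S'$, since for $s'\in S'$ we have $\phi(s')\in T'=\ker\pi_{T'}$, and so $\psi(s')=\phi(s')^{T'}=1$.

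Next I would define $\bar\phi$ on geometric cosets by $\bar\phi(xS')=\phi(x)T'=\psi(x)$, and check it is well defined: if $xS'=x'S'$, then $(x,x')\in s'$ for some $s'\in S'$, so $(\psi(x),\psi(x'))\in\psi(s')=1$, forcing $\phi(x)T'=\phi(x')T'$. By Remark \ref{rem:determine}, a morphism of schemes is determined by its action on points, so once a morphism exists extending this point map, its value $\bar\phi(s^{S'})$ is forced to be the unique element of $T\dm T'$ containing $(\phi(x)T',\phi(x')T')$ for any $(xS',x'S')\in s^{S'}$. To confirm that a morphism does arise, and that $\bar\phi(s^{S'})=\phi(s)^{T'}$, I would take $(xS',x'S')\in s^{S'}$, unwind the definition of $s^{S'}$ to obtain representatives $u\in xS'$, $v\in x'S'$ with $(u,v)\in s$, and note that $(\phi(u)T',\phi(v)T')=(\phi(x)T',\phi(x')T')\in\phi(s)^{T'}$.

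With $\bar\phi$ in hand, the commutativity $\bar\phi\circ\pi_{S'}=\pi_{T'}\circ\phi$ is immediate on both points and scheme elements directly from the definitions. The crux is admissibility of $\bar\phi$, and here the reduction to $\psi$ pays off. Suppose $(\bar\phi(xS'),yT')\in\bar\phi(s^{S'})$; unwinding $\bar\phi$, this says exactly $(\psi(x),yT')\in\psi(s)$. Admissibility of $\psi$ (Definition \ref{def:admissible}, applied with the point $x\in X$, the point $yT'$, and the element $s\in S$) supplies an $x'\in X$ with $\psi(x')=yT'$ and $(x,x')\in s$; then $\bar\phi(x'S')=\phi(x')T'=yT'$, and $(x,x')\in s$ gives $(xS',x'S')\in s^{S'}$, which is precisely what admissibility of $\bar\phi$ requires.

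Finally, uniqueness follows because $\pi_{S'}$ is surjective on points by Lemma \ref{lem:normal2}: any $\bar\phi'$ satisfying $\bar\phi'\circ\pi_{S'}=\pi_{T'}\circ\phi$ must obey $\bar\phi'(xS')=\pi_{T'}\phi(x)=\phi(x)T'=\bar\phi(xS')$ for every $x\in X$, so $\bar\phi'$ and $\bar\phi$ agree on points and hence, by Remark \ref{rem:determine}, coincide as morphisms. I expect the only real (and quite modest) obstacle to be the bookkeeping in verifying that the point map $xS'\mapsto\phi(x)T'$ genuinely extends to a scheme morphism sending $s^{S'}$ to $\phi(s)^{T'}$; once the admissibility of $\psi=\pi_{T'}\circ\phi$ is recorded, the admissibility of $\bar\phi$ is essentially formal.
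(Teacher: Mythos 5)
Your proof is correct, but it takes a genuinely different route from the paper's. The paper does not construct $\bar\phi$ by hand: it sets $K=\ker(\pi_{T'}\circ\phi)$, invokes the First Isomorphism Theorem (Theorem \ref{thm:first-isomorphism}) together with Lemmas \ref{lem:category} and \ref{lem:inclusion} to obtain an admissible morphism $\psi:S\dm K\to T\dm T'$ with $\psi\circ\pi_K=\pi_{T'}\circ\phi$, observes that the hypothesis $\phi(S')\subseteq T'$ forces $S'\subseteq K$, and then uses Corollary \ref{cor:normal2} to factor $\pi_K=\pi_K^{S'}\circ\pi_{S'}$, finally defining $\bar\phi=\psi\circ\pi_K^{S'}$, which is admissible simply as a composite of admissible morphisms. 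You instead build $\bar\phi$ directly on geometric cosets and on scheme elements, verify well-definedness and the morphism axiom by unwinding the definitions of $xS'$ and $s^{S'}$, and transfer admissibility from $\pi_{T'}\circ\phi$ to $\bar\phi$ by a pointwise argument; all of these steps check out. What your approach buys is self-containedness and transparency --- in particular it isolates exactly where $\phi(S')\subseteq T'$ is used, namely in showing the point map $xS'\mapsto\phi(x)T'$ is well defined --- at the cost of redoing verifications (well-definedness, the morphism condition, admissibility) that the paper's route inherits for free from previously established structural results. The paper's argument is shorter on the page but leans on the full strength of Theorem \ref{thm:first-isomorphism} and Corollary \ref{cor:normal2}.
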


\begin{proof}
First, $\pi_{T'}\circ \phi$ is admissible by Lemmas \ref{lem:category} and \ref{lem:normal2}.  Let
$K=\ker(\pi_{T'}\circ \phi)$.
By Theorem \ref{thm:first-isomorphism}, together with Lemmas \ref{lem:category} and \ref{lem:inclusion},
we have an admissible morphism $\psi:S\dm K\to T\dm T'$ such that $\psi\circ\pi_{K}=\pi_{T'}\circ \phi$,
defined by
\[\psi(xK)=\pi_{T'}(\phi(x))=\phi(x)T'\quad\text{and}\quad \psi(s^K)=(\pi_{T'}(\phi(s)))=\phi(s)^{T'}.\]
Now, if $s\in S'$, then $\phi(s)\in T'$, so $\pi_{T'}(\phi(s))=1$; that is, $S'\subseteq K$.  By Corollary
\ref{cor:normal2}, $\pi_{K}^{S'}$ is admissible and surjective, and $\pi_K=\pi_{K}^{S'}\circ \pi_{S'}$.  Let
$\bar\phi=\psi\circ\pi_{K}^{S'}$, which is admissible by Lemma \ref{lem:category}.  Thus, we have
\[\pi_{T'}\circ \phi=\psi\circ \pi_K=\psi\circ \pi_{K}^{S'}\circ \pi_{S'}=\bar\phi\circ \pi_{S'}.\]
Uniqueness of $\bar\phi$ follows from surjectivity of $\pi_{S'}$.
\end{proof}

\section{Thin radicals and thin quotients}\label{sec:thin}

In this section, let $\G$ denote the category of finite groups.
Recall from Definitions \ref{def:radical} and \ref{def:residue} that, to a scheme $S$, we can associate
two groups $\Rad(S)$ and $\Res(S)$, and recall from Example \ref{ex:group} that, to a group $G$, we
can associate a scheme $S(G)$.  In Corollary \ref{cor:Sfunctor}, we showed that $S$ extends to a functor
from $\G$ to $\Sa$.  In Definition \ref{def:SA}, we defined $S_a:\G\to \Sal$ as $F\circ S$.
We now show how to extend $\Rad$ and $\Res$ to functors from $\Sal$ to $\G$.
We will then show that the functor $S_a$ is right adjoint to $\Res$ and left adjoint to $\Rad$.

\begin{lem}\label{lem:Radfun} Suppose $S$ and $T$ are schemes on sets $X$ and $Y$, and
$\phi\in \Hom_{\Sa}(S,T)$.  Define $\Rad(\phi):\Rad(S)\to \Rad(T)$ by
\[\Rad(\phi)(\{s\})=\{\phi(s)\}.\]  Then $\Rad(\phi)$ is a group homomorphism.  If
$\psi\tal \phi$ then $\Rad(\phi)=\Rad(\psi)$.  With these definitions, $\Rad$ is a functor from
$\Sal$ to $\G$.
\end{lem}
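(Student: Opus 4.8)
The plan is to verify in turn that $\Rad(\phi)$ lands in $\Rad(T)$, that it is a group homomorphism, that it depends only on the algebraic equivalence class of $\phi$, and finally that $\Rad$ respects identities and composition. For the first point, an element of $\Rad(S)$ is a singleton $\{s\}$ with $s$ thin, and by Corollary \ref{cor:thin} the element $\phi(s)$ is then thin in $T$, so $\{\phi(s)\}$ genuinely lies in $\Rad(T)$. This is precisely the step that fails for general morphisms, as Example \ref{ex:not} shows, and it is the reason we must work in $\Sa$.

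To establish the homomorphism property, I would argue as follows. Given thin elements $s, s' \in S$, the product $\{s\}\{s'\}$ in $\Rad(S)$ is the complex product $ss'$, which is a singleton $\{t\}$ for a thin element $t$ with $a_{ss'}^t > 0$. Choosing $x, y, z \in X$ with $(x,y) \in s$, $(y,z) \in s'$, and $(x,z) \in t$, and applying the morphism property of $\phi$, gives $(\phi(x), \phi(y)) \in \phi(s)$, $(\phi(y), \phi(z)) \in \phi(s')$, and $(\phi(x), \phi(z)) \in \phi(t)$; hence $a_{\phi(s)\phi(s')}^{\phi(t)} > 0$, so $\phi(t) \in \phi(s)\phi(s')$. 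Since $\phi(s)$ and $\phi(s')$ are thin by Corollary \ref{cor:thin}, their complex product is a single thin element, so $\{\phi(s)\}\{\phi(s')\} = \{\phi(t)\}$. This is exactly the identity $\Rad(\phi)(\{s\}\{s'\}) = \Rad(\phi)(\{s\})\,\Rad(\phi)(\{s'\})$. Preservation of the identity element is automatic for a homomorphism, but in any case the morphism axiom forces $\phi(1_X) = 1_Y$.

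The remaining claims are direct. If $\psi \tal \phi$, then $\psi(s) = \phi(s)$ for every $s \in S$ by Definition \ref{def:alg-equiv}, so $\Rad(\psi)(\{s\}) = \{\psi(s)\} = \{\phi(s)\} = \Rad(\phi)(\{s\})$; thus $\Rad$ is constant on algebraic equivalence classes and descends to a well-defined map on $\Hom_{\Sal}(S,T)$. Functoriality is equally immediate: $\Rad(\id_S)(\{s\}) = \{s\}$, and $\Rad(\psi\circ\phi)(\{s\}) = \{\psi(\phi(s))\} = \Rad(\psi)(\Rad(\phi)(\{s\}))$.

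I do not anticipate a serious obstacle, since the essential content is carried entirely by Corollary \ref{cor:thin}, which guarantees that admissible morphisms send thin elements to thin elements. The only point requiring any care is the homomorphism property, where one must translate between the complex product of singletons and the structure-constant condition $a_{ss'}^t > 0$, relying on thinness to ensure that the relevant complex products are themselves singletons.
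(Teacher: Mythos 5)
Your proposal is correct and follows essentially the same route as the paper's proof: Corollary \ref{cor:thin} to see that $\Rad(\phi)$ lands in $\Rad(T)$, the fact that a product of thin elements is a singleton to get the homomorphism property, and the same direct verifications for algebraic equivalence and functoriality. The only difference is that you spell out explicitly why $\phi(t)\in\phi(s)\phi(s')$ via the structure-constant condition, a step the paper simply asserts.
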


\begin{proof}
By Corollary \ref{cor:thin}, if $s\in S$ is thin, then $\phi(s)$ is thin, so if 
$\{s\}\in \Rad(S)$, then $\{\phi(s)\}\in \Rad(T)$.
Now, suppose $p$ and $q$ are thin elements in $S$, and $r\in pq$.  Then $r$ is thin and $\{r\}=\{p\}\{q\}$.
Moreover, as above, $\phi(p)$ and $\phi(q)$ are thin, and $\phi(r)\in \phi(p)\phi(q)$, so  
$\{\phi(r)\}=\{\phi(p)\}\{\phi(q)\}$.  Thus,
\begin{equation*}
\begin{split}
\Rad(\phi)(\{p\}\{q\})&=\Rad(\phi)(\{r\})\\
&=\{\phi(r)\}\\
&=\{\phi(p)\}\{\phi(q)\}\\
&=\Rad(\phi)(\{p\})\Rad(\phi)(\{q\}),
\end{split}
\end{equation*}
so $\Rad(\phi)$ is a group homomorphism.

It follows immediately from Definition \ref{def:alg-equiv} that if $\phi\tal\psi$, then 
$\Rad(\phi)=\Rad(\psi)$.

If $S$ is a scheme on a finite set $X$, then for $\{s\}\in \Rad(S)$, we clearly 
have \[\Rad(\id_S)(\{s\})=\{\id_S(s)\}=\{s\}.\]  Likewise, if $S$, $T$, and $U$ are schemes
on $X$, $Y$, and $Z$, and if $\phi\in \Hom_{\Sa}(S,T)$ and $\psi\in \Hom_{\Sa}(T,U)$,
then we have
\begin{equation*}
\begin{split}
\Rad(\psi\circ\phi)(\{s\})&=\{\psi(\phi(s))\}\\
&=\Rad(\psi)(\{\phi(s)\})\\
&=\Rad(\psi)(\Rad(\phi)(\{s\})\\
&=(\Rad(\psi)\circ \Rad(\phi))(\{s\}).
\end{split}
\end{equation*}
\end{proof}

\begin{lem}\label{lem:Resfun}
Suppose $S$ and $T$ are schemes on sets $X$ and $Y$, and $\phi\in \Hom_\Sa(S,T)$.  Let $S'$ denote
$O^{\vartheta}(S)$ and let $T'$ denote $O^{\vartheta}(T)$.
Define $\Res(\phi):\Res(S)\to \Res(T)$ by \[\Res(\phi)(\{s^{S'}\})=\{\phi(s)^{T'}\}.\]
Then $\Res(\phi)$ is a group homomorphism.  
If $\psi\tal \phi$ then $\Res(\phi)=\Res(\psi)$.  With these definitions, $\Res$ is a functor from
$\Sal$ to $\G$.
\end{lem}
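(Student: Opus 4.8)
The plan is to reduce the entire statement to the corresponding facts about $\Rad$ established in Lemma \ref{lem:Radfun}, by passing to the morphism induced on thin quotients. Concretely, I will produce an admissible morphism $\bar\phi:S\dm S'\to T\dm T'$ satisfying $\bar\phi(s^{S'})=\phi(s)^{T'}$, observe that $S\dm S'$ and $T\dm T'$ are both thin (so that $\Res(S)=\Rad(S\dm S')$ and $\Res(T)=\Rad(T\dm T')$), and then identify $\Res(\phi)$ with $\Rad(\bar\phi)$. Once that identification is in place, the homomorphism property, the invariance under $\tal$, and functoriality all follow from Lemma \ref{lem:Radfun}.

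The key step, and the part I expect to require the most care, is to verify the hypothesis needed to invoke Corollary \ref{cor:first}, namely that $\phi(S')\subseteq T'$, i.e.\ $\phi(O^{\vartheta}(S))\subseteq O^{\vartheta}(T)$. To see this, I would consider the composite $\pi_{T'}\circ\phi:S\to T\dm T'$, which is admissible by Lemmas \ref{lem:category} and \ref{lem:normal2}. Since $T'=O^{\vartheta}(T)$ is strongly normal, the quotient $T\dm T'$ is thin by Remark \ref{rem:intersection}, so Lemma \ref{lem:to-thin} gives $O^{\vartheta}(S)\subseteq\ker(\pi_{T'}\circ\phi)$. Thus for $s\in S'=O^{\vartheta}(S)$ we have $\pi_{T'}(\phi(s))=1$, and since $\ker\pi_{T'}=T'$ by Lemma \ref{lem:normal2}, this forces $\phi(s)\in T'$. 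Hence $\phi(S')\subseteq T'$.

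With this in hand, Corollary \ref{cor:first} supplies a unique admissible morphism $\bar\phi:S\dm S'\to T\dm T'$ with $\bar\phi\circ\pi_{S'}=\pi_{T'}\circ\phi$; evaluating this identity on a scheme element $s$ gives $\bar\phi(s^{S'})=\phi(s)^{T'}$. Because $\bar\phi$ is a genuine function on $S\dm S'$, this yields exactly the well-definedness of $\Res(\phi)$: if $s^{S'}=p^{S'}$, then $\phi(s)^{T'}=\bar\phi(s^{S'})=\bar\phi(p^{S'})=\phi(p)^{T'}$. Since every element of the thin scheme $S\dm S'$ is thin, $\Res(S)$ is precisely the thin radical $\Rad(S\dm S')$, and likewise for $T$; comparing the defining formulas then shows $\Res(\phi)=\Rad(\bar\phi)$. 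Consequently $\Res(\phi)$ is a group homomorphism by Lemma \ref{lem:Radfun}.

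Finally, the remaining assertions are immediate from the defining formula $\Res(\phi)(\{s^{S'}\})=\{\phi(s)^{T'}\}$, mirroring the end of the proof of Lemma \ref{lem:Radfun}. If $\psi\tal\phi$, then $\phi(s)=\psi(s)$ for every $s\in S$, so $\Res(\phi)=\Res(\psi)$. For functoriality, $\Res(\id_S)(\{s^{S'}\})=\{s^{S'}\}$, and for composable $\phi:S\to T$ and $\psi:T\to U$ with $U'=O^{\vartheta}(U)$, both $\Res(\psi\circ\phi)$ and $\Res(\psi)\circ\Res(\phi)$ send $\{s^{S'}\}$ to $\{\psi(\phi(s))^{U'}\}$, establishing that $\Res$ is a functor from $\Sal$ to $\G$.
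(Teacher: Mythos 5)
Your proof is correct, and it takes a genuinely different (and arguably more conceptual) route than the paper's. The crucial input is the same in both arguments: you both observe that $\pi_{T'}\circ\phi$ is admissible, that $T\dm T'$ is thin, and hence by Lemma \ref{lem:to-thin} that $S'=O^{\vartheta}(S)\subseteq\ker(\pi_{T'}\circ\phi)$, which is exactly the statement $\phi(S')\subseteq T'$. From that point the two proofs diverge. The paper establishes well-definedness by a direct coset manipulation ($p^{S'}=q^{S'}$ implies $p\in qs'$ for some $s'\in S'$, whence $\phi(p)T'=\phi(q)T'$) and then verifies the homomorphism identity by hand, choosing $r\in pq$ and tracking singleton products. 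You instead feed $\phi(S')\subseteq T'$ into Corollary \ref{cor:first} to obtain the admissible morphism $\bar\phi:S\dm S'\to T\dm T'$ with $\bar\phi(s^{S'})=\phi(s)^{T'}$; well-definedness of $\Res(\phi)$ is then automatic because $\bar\phi$ is a function on $S\dm S'$, and the homomorphism property is inherited from Lemma \ref{lem:Radfun} via the identification $\Res(\phi)=\Rad(\bar\phi)$ (valid since every element of a thin scheme is thin, so $\Res(S)=\Rad(S\dm S')$). What your approach buys is the clean factorization $\Res=\Rad\circ(\text{thin quotient})$ at the level of morphisms, avoiding any repeated coset computations; what it costs is a dependence on the heavier machinery of Corollary \ref{cor:first} (hence on the first isomorphism theorem), where the paper's argument is self-contained modulo Remark \ref{rem:quotient}. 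Since Corollary \ref{cor:first} appears earlier in the paper than this lemma, there is no circularity, and your final verifications of $\tal$-invariance and functoriality by direct computation match the paper's.
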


\begin{proof} We first need to prove that $\Res(\phi)$ is well-defined.
Since strongly normal subsets are normal, $T'$ is normal in $T$, so by Lemma \ref{lem:normal2},
the quotient morphism $\pi_{T'}:T\to T\dm T'$ is admissible.  Since $\Sa$ is a category,
the composition $\pi_{T'}\circ \phi$ is admissible.  Since $T\dm T'$ is thin by Remark
\ref{rem:intersection}, it follows from Lemma \ref{lem:to-thin} that $\ker(\pi_{T'}\circ \phi)$
contains $S'$.
Now, suppose $p, q\in S$, and $p^{S'}=q^{S'}$; we sill show that $\phi(p)^{T'}=\phi(q)^{T'}$.
Since $S'$ is normal, we have $p\in qs'$ for some
$s'\in S'$; cf. remark \ref{rem:quotient}.  Since $s'\in \ker(\pi_{T'}\circ\phi)$,
$\phi(s')\in \ker(\pi_{T'})=T'$ by Lemma \ref{lem:normal2}.
Thus, $\phi(p)\in \phi(q)\phi(s')\subseteq \phi(q)T'$, whence $\phi(p)T'\subseteq \phi(q)T'$.
By a symmetric argument, $\phi(q)T'\subseteq \phi(p)T'$, so $\phi(p)^{T'}=\phi(q)^{T'}$ by
Remark \ref{rem:quotient}.  Thus, $\Res(\phi)$ is well-defined.

Now, suppose $p, q\in S$, and choose $r\in pq$.  Then $r^{S'}\in p^{S'}q^{S'}$, so
$\{r^{S'}\}=\{p^{S'}\}\{q^{S'}\}$.  Also, $\phi(r)\in \phi(p)\phi(q)$, so $\phi(r)^{T'}\in
\phi(p)^{T'}\phi(q)^{T'}$, so $\{\phi(r)^{T'}\}=\{\phi(p)^{T'}\}\{\phi(q)^{T'}\}$.  Thus, we have
\begin{equation*}
\begin{split}
\Res(\phi)(\{p^{S'}\}\{q^{S'}\})&=\Res(\phi)(\{r^{S'}\})\\
&=\{\phi(r)^{T'}\}\\
&=\{\phi(p)^{T'}\}\{\phi(q)^{T'}\}\\
&=\Res(\phi)(\{p^{S'}\})\Res(\phi)(\{q^{S'}\}).
\end{split}
\end{equation*}
That is, $\Res(\phi)$ is a group homomorphism.

It follows immediately from Definition \ref{def:alg-equiv} that if $\phi\tal\psi$, then
$\Res(\phi)=\Res(\psi)$.

Now, for $\{s^{S'}\}\in \Res(S)$, we clearly have
\[\Res(\id_S)(\{s^{S'}\})=\{\id_S(s)^{S'}\}=\{s^{S'}\}.\]
Likewise, if $S$, $T$, and $U$ are schemes
on finite sets $X$, $Y$, and $Z$, and if $\phi\in \Hom_{\Sa}(S,T)$ and $\psi\in \Hom_{\Sa}(T,U)$,
then we have
\begin{equation*}
\begin{split}
\Res(\psi\circ\phi)(\{s^{S'}\})&=\psi(\phi(s))^{U'}\\
&=\Res(\psi)(\phi(s)^{T'})\\
&=\Res(\psi)(\Res(\phi)(s^{S'}))\\
&=(\Res(\psi)\circ \Res(\phi))(s^{S'}).
\end{split}
\end{equation*}
\end{proof}

Recall that an adjunction between categories ${\mathcal C}$ and ${\mathcal D}$ consists of
a pair of functors $L:{\mathcal C}\to {\mathcal D}$ and $R:{\mathcal D}\to {\mathcal C}$,
together with a natural isomorphism of set-valued functors
\[\eta:\Hom_{\mathcal D}(L(-),-)\to \Hom_{\mathcal C}(-,R(-)).\]
We then say that $L$ is the left adjoint functor and $R$ is the right adjoint functor.
We write $\eta_C^D$ for the function
\[\Hom_{\mathcal D}(LC,D)\to \Hom_{\mathcal C}(C,RD).\]
If $C$ is an object of ${\mathcal C}$, then $\eta_C^{LC}(\id_{LC})$
is a morphism in $\mathcal C$ from $C$ to $RLC$, called the unit of the $(L,R)$-adjunction.
Similarly, if $D$ is an object of ${\mathcal D}$, then
$(\eta_{RD}^D)^{-1}(\id_{RD})$ is a morphism in ${\mathcal D}$ from $LRD$ to $D$,
called the counit of the $(L,R)$-adjunction.  See MacLane \cite{MacLane} for more
details on adjunctions.
In the next two Propositions, we will define two adjunctions between $\G$ and $\Sal$.

\begin{prop}\label{prop:Radadjoint} There is an adjunction between the categories
$\G$ and $\Sal$ for which the
functor $S_a:\G\to \Sal$ is the left adjoint functor and the functor $\Rad:\Sal\to \G$ is the right adjoint
functor.
\end{prop}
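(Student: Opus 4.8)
The plan is to exhibit the required natural isomorphism $\eta$ directly. Writing $S$ for a scheme on a finite set $X$ and $G$ for a finite group, I want a bijection
\[\eta_G^S:\Hom_{\Sal}(S_a(G),S)\to \Hom_{\G}(G,\Rad(S))\]
natural in $G$ and $S$. Given an admissible morphism $\phi$ from $S(G)$ to $S$, I would set $\eta_G^S(\phi)(g)=\{\phi(g\tilde{\:})\}$. Since $S(G)$ is thin, Corollary \ref{cor:thin} shows $\phi(g\tilde{\:})$ is a thin element of $S$, so $\{\phi(g\tilde{\:})\}\in \Rad(S)$; moreover, up to the evident isomorphism $j:G\iso \Rad(S(G))$, $g\mapsto \{g\tilde{\:}\}$ (an isomorphism by the structure constants of Example \ref{ex:group}), the map $\eta_G^S(\phi)$ is exactly $\Rad(\phi)\circ j$, where $\Rad(\phi)$ is the group homomorphism of Lemma \ref{lem:Radfun}; hence it is a homomorphism. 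Because $\Rad(\phi)$ depends only on the values $\phi(s)$, the assignment descends to algebraic equivalence classes, so $\eta_G^S$ is well defined on $\Hom_{\Sal}$.

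For the inverse, given a group homomorphism $f:G\to \Rad(S)$, write $f(g)=\{s_g\}$ with each $s_g$ thin. I would fix a base point $x_0\in X$ and define a representative morphism $\phz_f$ by $\phz_f(g\tilde{\:})=s_g$ on scheme elements and $\phz_f(g)=$ the unique point of $x_0 s_g$ (a singleton since $s_g$ is thin) on $G$. The key computations, all resting on the fact that thin elements have valency $1$ together with the identity $f(g_1g)=f(g_1)f(g)$, are: (i) if $g_2=g_1g$ then $(\phz_f(g_1),\phz_f(g_2))\in s_g$, so $\phz_f$ is a morphism of schemes; and (ii) if $(\phz_f(g_1),y)\in s_g$, then tracing the unique $s_{g_1}s_g$-path from $x_0$ forces $y=\phz_f(g_1g)$, which gives admissibility with witness $g'=g_1g$. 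I would then set $(\eta_G^S)^{-1}(f)=[\phz_f]$.

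That $\eta_G^S$ and $f\mapsto[\phz_f]$ are mutually inverse is then short: one composite sends $f$ to $g\mapsto\{\phz_f(g\tilde{\:})\}=\{s_g\}=f(g)$; the other sends $[\phi]$ to $[\phz_{\eta(\phi)}]$, and since $\phz_{\eta(\phi)}(g\tilde{\:})=\phi(g\tilde{\:})$ for all $g$, the two morphisms agree on all scheme elements and are therefore algebraically equivalent, i.e. equal in $\Sal$. This is the step that forces us to work in $\Sal$ rather than $\Sa$: the original $\phi$ and the reconstructed $\phz_{\eta(\phi)}$ need not agree on points of $G$ (by Remark \ref{rem:determine} a morphism is pinned down by its values on points, not on scheme elements), so only after passing to algebraic equivalence classes does the round trip close up.

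Finally, naturality in both variables is immediate from the formula $\eta(\phi)(g)=\{\phi(g\tilde{\:})\}$. For a group homomorphism $\alpha:G'\to G$, both ways around the naturality square send $\phi$ to $g'\mapsto\{\phi(\alpha(g')\tilde{\:})\}$, using $S_a(\alpha)(g'\tilde{\:})=\alpha(g')\tilde{\:}$. For an admissible class $\beta:S\to S'$, both ways send $\phi$ to $g\mapsto\{\beta(\phi(g\tilde{\:}))\}$, using the definition of $\Rad(\beta)$. I expect the main obstacle to be step (ii) of the second paragraph---checking that the reconstructed $\phz_f$ is genuinely admissible---since this is where the valency-$1$ structure of thin elements and the homomorphism property of $f$ must be combined to pin down the required preimage; everything else is either formal or a direct consequence of Lemma \ref{lem:Radfun} and Corollary \ref{cor:thin}.
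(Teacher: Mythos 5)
Your proposal is correct and follows essentially the same route as the paper: the same formula $\eta(\phi)(g)=\{\phi(g\tilde{\:})\}$ for one direction, and the same base-point construction (unique point of $x_0 s_g$) for the inverse, with the same admissibility check via the singleton $x_0 s_{g_1}s_g = x_0 s_{g_1 g}$. The only cosmetic differences are that you route the homomorphism property of $\eta(\phi)$ through Lemma \ref{lem:Radfun} instead of recomputing it, and that you make explicit the (correct) observation that the round trip only closes up to algebraic equivalence, which is why the adjunction lives in $\Sal$.
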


\begin{proof} 
We will first define a function \[\eta_T^G:\Hom_{\Sal}(S_a(G),T)\to \Hom_{\G}(G,\Rad(T))\]
for each finite group $G$ and finite scheme $T$.
Then, we will show that these functions define a natural transformation $\eta$.
Then, we will define functions
\[\epz_T^G:\Hom_{\G}(G,\Rad(T))\to \Hom_{\Sal}(S_a(G),T),\]
and finally we will show that $\epz_T^G$
and $\eta_T^G$ are inverses, implying that $\eta_T^G$ is a bijection for each $G$ and $T$,
so that $\eta$ is a natural isomorphism.

Given $\phi\in \Hom_{\Sal}(S_a(G), T)$, Definition \ref{def:alg-equiv} implies that
$\phi(g\tilde{\:})$ is well-defined for $g\in G$, and Corollary \ref{cor:thin} implies that
$\phi(g\tilde{\:})$ is thin.  Let $\eta_T^G(\phi)(g)=\{\phi(g\tilde{\:})\}\in \Rad(T)$.
From Equation (\ref{eqn:structure}) in Section \ref{sec:background},
we have $\{(g_1g_2)\tilde{\:}\}=g_1\tilde{\:}g_2\tilde{\:}$ for any $g_1, g_2\in G$,
so \[\{\phi((g_1g_2)\tilde{\:}\}=\phi(g_1\tilde{\:}g_2\tilde{\:})\subseteq
\phi(g_1\tilde{\:})\phi(g_2\tilde{\:}).\]
Since the product of two thin elements is a singleton set consisting of a thin element,
we actually have equality above, whence
\[\eta_T^G(\phi)(g_1g_2)=\{\phi((g_1g_2)\tilde{\:}\}=
\{\phi(g_1\tilde{\:})\}\{\phi(g_2\tilde{\:})\}=\eta_T^G(\phi)(g_1)\cdot\eta_T^G(\phi)(g_2),\] so
$\eta_T^G(\phi)\in \Hom_{\G}(G,\Rad(T)).$

Next, we show that $\eta_T^G$ is natural.  In other words, we need to show that if 
$\phi\in \Hom_{\Sal}(S_a(G_1),T_1)$, $\psi\in \Hom_{\Sal}(T_1,T_2)$
and $\chi\in \Hom_{\G}(G_2,G_1)$, then \[\Rad(\psi)\circ \eta_{T_1}^{G_1}(\phi)\circ \chi=
\eta_{T_2}^{G_2}(\psi\circ \phi\circ S_a(\chi)):G_2\to \Rad(T_2).\]
For $g\in G_2$, we have
\begin{equation*}
\begin{split}
(\Rad(\psi)\circ \eta_{T_1}^{G_1}(\phi)\circ \chi)(g)& 
=\Rad(\psi)\left(\{\phi(\chi(g)\tilde{\:})\}\right)\\
&=\{(\psi\circ\phi)(\chi(g)\tilde{\:})\}\\
&=\{(\psi\circ\phi\circ S_a(\chi))(g\tilde{\:})\}\\
&=\eta_{T_2}^{G_2}(\psi\circ\phi\circ S_a(\chi))(g).
\end{split}
\end{equation*}

Now we will define \[\epz_T^G:\Hom_{\G}(G,\Rad(T))\to \Hom_{\Sal}(S_a(G),T).\]
Given $\phi\in \Hom_{\G}(G,\Rad(T))$, we will define an admissible morphism
of schemes $\eps_T^G(\phi)$ from $S(G)$ to $T$, and take $\epz_T^G(\phi)$
to be the algebraic equivalence class of $\eps_T^G(\phi)$.  (Note the difference between
$\eps$ and $\epz$.)  Suppose $T$ is a scheme
on $Y$.
Choose a point $y_*\in Y$; then for each $g\in G$,
$\phi(g)=\{t\}$ for some thin element $t\in T$, so $y_*\phi(g)$ is a singleton set.
For $g\in G$, let $\eps_T^G(\phi)(g)$ be the unique element in $y_*\phi(g)$, and for $g\tilde{\:}\in S_a(G)$,
let $\eps_T^G(\phi)(g\tilde{\:})$ be the unique (thin) element in the singleton set $\phi(g)$.

We now show that $\eps_T^G(\phi)$ is an admissible morphism of schemes.
Suppose $(g_1, g_2)\in g\tilde{\:}$, and let $y_1$ and $y_2$ be the unique elements in $y_*\phi(g_1)$
and $y_*\phi(g_2)$, and let $t$ be the unique element in $\phi(g)$.
Then $g_2=g_1g$, so $\phi(g_2)=\phi(g_1)\phi(g)$.  Thus,
\[\{y_2\}=y_*\phi(g_2)=y_*\phi(g_1)\phi(g)=y_1\phi(g)=y_1t,\] so $(y_1, y_2)\in t$.
This shows that $\eps_T^G(\phi)$ is a morphism of schemes.
If $(\eps_T^G(\phi)(g_1),y)\in \eps_T^G(\phi)(g\tilde{\:})$,
then letting $y_*\phi(g_1)=\{y_1\}$, we have $y\in y_1\phi(g)\subseteq y_*\phi(g_1)\phi(g)=y_*\phi(g_1g)$.
Thus, $\{y\}=y_*\phi(g_1g)=\eps_T^G(\phi)(g_1g)$, and $(g_1,g_1g)\in g\tilde{\:}$.  Therefore,
$\eps_T^G(\phi)$ is admissible.

Now, if $\phi\in \Hom_{\Sa}(S_a(G),T)$, then
$\epz_T^G(\eta_T^G(\phi))(g\tilde{\:})$ is the unique thin element in the singleton set
$\eta_T^G(\phi)(g)=\{\phi(g\tilde{\:})\}$, so by Definition \ref{def:alg-equiv}
$(\epz_T^G\circ\eta_T^G)(\phi)=\phi$.  On the other hand, if $\phi\in \Hom_{\G}(G,\Rad(T))$,
then $\eta_T^G(\epz_T^G(\phi))(g)=\{\epz_T^G(\phi)(g\tilde{\:})\}=\phi(g)$, so
$(\eta_T^G\circ\epz_T^G)(\phi)=\phi$.  Thus,
$\eta_T^G$ and $\epz_T^G$ are inverse functions, so $\eta_T^G$ is a bijection for each $G,T$.

\end{proof}

\begin{rem}\label{rem:unitSR} 
For a group $G$, the unit of the $(S_a,\Rad)$-adjunction given in Proposition \ref{prop:Radadjoint} is the
morphism $\eta_{S_a(G)}^G(\id_{S_a(G)})\in \Hom_{\G}(G,\Rad(S_a(G)))$.  This morphism
takes $g\in G$ to $\{g\tilde{\:}\}$, and is clearly an isomorphism.

For a scheme $T$, the
counit of the adjunction is a morphism $\phi\in \Hom_{\Sal}(S_a(\Rad(T)),T)$
satisfying $\eta_T^{\Rad(T)}(\phi)=\id_{\Rad(T)}$.
Then, for an element
$\{t\}\in \Rad(T)$, we have \[\{t\}=\eta_T^{\Rad(T)}(\phi)(\{t\})=\{\phi(\{t\}\tilde{\:})\},\]
so $\phi(\{t\}\tilde{\:})=t$.  Thus, the kernel of $\phi$ is trivial.  We may choose a representative
$\phz$ of the algebraic equivalence class of $\phi$ (note the difference between
$\phi$ and $\phz$).  Then, the image of $\phz$ is a subscheme of $T$ defined by a coset
of the thin radical of $T$; which subscheme one
obtains depends on the choice of $\phz$ representing $\phi$. 
In particular, if $T$ is thin, then
any representative of the counit of the adjunction is an isomorphism of schemes,
by Theorem \ref{thm:first-isomorphism}.  Note, however, that there does not exist a canonical admissible
morphism from the scheme $S(\Rad(T))$ to $T$, only an algebraic equivalence class of such
morphisms.
\end{rem}

\begin{prop}\label{prop:Resadjoint}
There is an adjunction between the categories $\G$ and $\Sal$ for which the
functor $\Res:\Sal\to \G$ is the left adjoint
functor and the functor $S_a:\G\to \Sal$ is the right adjoint functor.
\end{prop}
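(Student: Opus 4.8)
The plan is to construct, for each finite scheme $S$ and finite group $H$, a bijection
\[\eta_S^H:\Hom_\G(\Res(S),H)\to\Hom_{\Sal}(S,S_a(H))\]
natural in both variables, following the pattern of Proposition \ref{prop:Radadjoint} but in the opposite direction; throughout I write $S'=O^{\vartheta}(S)$, so that $S\dm S'$ is thin (Remark \ref{rem:intersection}) and $\Res(S)$ is its group of elements.

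First I would build the reverse map $\epz_S^H:\Hom_{\Sal}(S,S_a(H))\to\Hom_\G(\Res(S),H)$, since it is the more transparent of the two. Given $\phi\in\Hom_{\Sal}(S,S_a(H))$, the value $\phi(s)\in S(H)$ is well defined for each $s\in S$ by Definition \ref{def:alg-equiv}, and equals $h\tilde{\:}$ for a unique $h\in H$; I set $\epz_S^H(\phi)(\{s^{S'}\})=h$. To see this is well defined on $\Res(S)$, note that $S(H)$ is thin, so Lemma \ref{lem:to-thin} gives $S'\subseteq\ker\phi$; if $s^{S'}=t^{S'}$ then, $S'$ being normal, $s\in ts'$ for some $s'\in S'$ (cf. Remark \ref{rem:quotient}), and since $\phi(s')=1$ we get $\phi(s)\in\phi(t)\phi(s')=\{\phi(t)\}$, forcing the same $h$. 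That $\epz_S^H(\phi)$ is a homomorphism is then checked exactly as in Lemma \ref{lem:Resfun}: for $r\in st$ one has $\phi(r)\in\phi(s)\phi(t)$, and the complex product of two thin elements of $S(H)$ realizes multiplication in $H$.

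Next I would construct $\eta_S^H$, which is the crux. Given a homomorphism $\theta:\Res(S)\to H$, I would produce an actual admissible morphism and pass to its algebraic equivalence class, imitating the basepoint construction of $\eps_T^G$ in Proposition \ref{prop:Radadjoint}. Choose $x_*\in X$; since $S\dm S'$ is thin, for each $x\in X$ there is a unique element $\{r^{S'}\}\in\Res(S)$ with $(x_*S',xS')\in r^{S'}$, and I define the point map by $x\mapsto\theta(\{r^{S'}\})\in H$ together with the scheme-element map $s\mapsto\theta(\{s^{S'}\})\tilde{\:}$. Verifying that this is a morphism uses the fact that in the thin scheme $S\dm S'$ the element relating $x_*S'$ to $x_2S'$ is the complex product of those relating $x_*S'$ to $x_1S'$ and $x_1S'$ to $x_2S'$; admissibility is then automatic by Lemma \ref{lem:to-thin}. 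Because algebraic equivalence depends only on the action on scheme elements, the resulting class $\eta_S^H(\theta)$ is independent of the choice of $x_*$.

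Finally I would check that $\eta_S^H$ and $\epz_S^H$ are mutually inverse and that $\eta$ is natural. Both composites act as the identity already at the level of scheme elements, and morphisms in $\Sal$ are determined there (Definition \ref{def:alg-equiv}), so the inverse property is immediate; naturality in both variables reduces to a direct computation on scheme elements exactly as in Proposition \ref{prop:Radadjoint}. I expect the main obstacle to be the construction of $\eta_S^H(\theta)$: one must recognize the thin quotient $S\dm S'$ as encoding a regular action of $\Res(S)$ on its geometric cosets, so that a basepoint choice produces a genuine morphism of schemes, rather than attempting to define a compatible point map on $S$ directly.
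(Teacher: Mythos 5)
Your proposal is correct and follows essentially the same route as the paper: a basepoint construction turning a homomorphism $\Res(S)\to H$ into an admissible morphism $S\to S(H)$ (with admissibility free from Lemma \ref{lem:to-thin} and basepoint-independence at the level of algebraic equivalence), an inverse map reading off the image of $s$ as an element of $H$ (well defined because $O^{\vartheta}(S)\subseteq\ker\phi$), and the verifications reduced to scheme elements. The only cosmetic difference is that you check well-definedness of the inverse map directly via $s\in ts'$ with $\phi(s')=1$, where the paper invokes Corollary \ref{cor:first}; both amount to the same observation.
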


\begin{proof} As in the proof of Proposition \ref{prop:Radadjoint},
we will define a function \[\eta_G^T:\Hom_{\G}(\Res(T),G)\to \Hom_{\Sal}(T,S_a(G))\]
for each finite group $G$ and finite scheme $T$.
Then, we will show that these functions define a natural transformation $\eta$.
Then, we will define functions
\[\epz_G^T:\Hom_{\Sal}(T,S_a(G))\to \Hom_{\G}(\Res(T),G),\]
and finally we will show that $\epz_G^T$
and $\eta_G^T$ are inverses, implying that $\eta_G^T$ is a bijection for each $G$ and $T$.

Given a scheme $T$ on a finite set $Y$, let $T'=O^{\vartheta(T)}$.
Given $\phi\in \Hom_{\G}(\Res(T),G)$, we will define $\eta_G^T(\phi)$ by defining
an admissible morphism $\alpha$ from $T$ to $S(G)$; we then take $\eta_G^T(\phi)$ to
be the algebraic equivalence class of $\alpha$.
To this end, choose $y_*\in Y$, and for each $y\in Y$, let $\tau(y)\in T$ denote the element
containing $(y_*,y)$.  Now, for $y\in Y$,
let $\alpha(y)=\phi\left(\{\tau(y)^{T'}\}\right)$, and for $t\in T$,
let $\alpha(t)=(\phi(\{t^{T'}\}))\tilde{\:}$.  We now show that $\alpha$ is a morphism
of schemes.
Suppose $(y_1,y_2)\in t$; then $\tau(y_2)\in \tau(y_1)t$, so
$\{\tau(y_2)^{T'}\}=\{\tau(y_1)^{T'}\}\{t^{T'}\}$.  Since $\phi$ is a
group homomorphism, we have
\[\phi(\{\tau(y_2)^{T'}\})=\phi\left(\{\tau(y_1)^{T'}\}\right)\phi\left(\{t^{T'}\}\right).\]
Thus,
\[\left(\phi\left(\{\tau(y_1)^{T'}\}\right),\phi(\{\tau(y_2)^{T'}\}\right)\in \phi\left(\{t^{T'}\}\right)\tilde{\:},\]
or $(\alpha(y_1),\alpha(y_2))\in \alpha(t)$.  Thus, $\alpha$ is a morphism of schemes, and
since $S_a(G)$ is thin, $\alpha$ is admissible by Lemma \ref{lem:to-thin}.  Now, from
our definition of $\alpha(t)$, we see that the algebraic equivalence class of $\alpha$ does not depend on
our choice of $y_*$.  Thus, we may let $\eta_G^T(\phi)$ denote the algebraic equivalence
class of $\alpha$.

Now, we show that $\eta_G^T$ is natural.  Assume $\phi\in \Hom_{\G}(\Res(T_1),G_1)$,
$\psi\in \Hom_{\Sal}(T_2,T_1)$, and $\chi\in \Hom_{\G}(G_1,G_2)$.  We must show
\[S_a(\chi)\circ \eta_{G_1}^{T_1}(\phi)\circ \psi=\eta_{G_2}^{T_2}(\chi\circ\phi\circ\Res(\psi)).\]
Since we are working in the category $\Sal$, it suffices to show that both sides of the above
equation coincide when applied to some $t\in T_2$.  We have
\begin{equation*}
\begin{split}
(S_a(\chi)\circ\eta_{G_1}^{T_1}(\phi)\circ\psi)(t)
&=S_a(\chi)(\eta_{G_1}^{T_1}(\phi)(\psi(t))\\
&=S_a(\chi)\left(\left(\phi\left(\{\psi(t)^{T_1'}\}\right)\right)\tilde{\:}\right)\\
&=\left(\chi(\phi(\{\psi(t)^{T_1'}\}))\right)\tilde{\:}\\
&=\left((\chi\circ\phi)\left(\{\psi(t)^{T_1'}\}\right)\right)\tilde{\:}\\
&=\left(\left(\chi\circ\phi\circ\Res(\psi)\right)\left(\{t^{T_2'}\}\right)\right)\tilde{\:}\\
&=\eta_{G_2}^{T_2}(\chi\circ\phi\circ\Res(\psi))(t).
\end{split}
\end{equation*}
Now, we define \[\epz_G^T:\Hom_{\Sal}(T,S_a(G))\to \Hom_{\G}(\Res(T),G).\]
For this, we adopt the following notation.  Given a group $G$, and given an element
$s\in S(G)$, we write $s\hat{\:}$ for the element $g\in G$ such that $g\tilde{\:}=s$.
Suppose given a morphism $\phi\in \Hom_{\Sal}(T,S_a(G))$,
and let $\varphi$ denote a representative of $\phi$ in $\Hom_{\Sa}(T,S(G))$.
As above, let $T'=O^{\vartheta}(T)$.  Note that since $S_a(G)$ is thin, 
$O^{\vartheta}(S_a(G))$ is trivial, and $T'\subseteq \ker(\varphi)$
by Lemma \ref{lem:to-thin}.

By Corollary \ref{cor:first}, there is an admissible morphism
\[\bar\varphi:T\dm T'=T\dm O^{\vartheta(T)}\to S_a(G)\dm O^{\vartheta}(S_a(G))\iso S_a(G)\]
such that $\bar\varphi\circ\pi_{T'}=\varphi$.  Thus, given $t\in T$, $\varphi(t)$
depends only on $t^{T'}$.  Given $\{t^{T'}\}\in \Res(T)$, let
$\epsilon_G^T(\phi)\left(\{t^{T'}\}\right)=\varphi(t)\hat{\:}\in G$.  Of course,
$\varphi(t)$ is independent of the choice of representative $\varphi$ of $\phi$,
so $\epsilon_G^T(\phi)$ is a well-defined function from $\Res(T)$ to $G$.

We now show that $\epsilon_G^T(\phi)$ is a group homomorphism.
Suppose given $\{t_1^{T'}\}, \{t_2^{T'}\}\in
\Res(T)$.  Choose $t_3\in t_1t_2$.  Then $\{t_3^{T'}\}=\{t_1^{T'}\}\{t_2^{T'}\}$.
Let $g_i=\epsilon_G^T(\phi)(t_i^{T'})$ for $i\in \{1,2,3\}$.  Then
$g_i\tilde{\:}=\varphi(t_i)$.  Since $t_3\in t_1t_2$, we have $\varphi(t_3)\in
\varphi(t_1)\varphi(t_2)$, so $g_3\tilde{\:}\in g_1\tilde{\:}g_2\tilde{\:}$,
or equivalently, $t_3=g_1g_2$.  Thus, we have
\[\epsilon_G^T(\phi)\left(\{t_1^{T'}\}\{t_2^{T'}\}\right)=
\epsilon_G^T(\phi)\left(\{t_3^{T'}\}\right)=
g_3=g_1g_2=\epsilon_G^T(\phi)\left(\{t_1^{T'}\}\right)
\epsilon_G^T(\phi)\left(\{t_2^{T'}\}\right).\]
Thus, $\epsilon_G^T(\phi)$ is a group homomorphism.

Finally, we must show that $(\eta_G^T\circ\epsilon_G^T)(\phi)=\phi$ for
$\phi\in \Hom_{\Sal}(T,S_a(G))$ and $(\epsilon_G^T\circ\eta_G^T)(\psi)=\psi$
for $\psi\in \Hom_{\G}(\Res(T),G)$.  For the first equation, given $t\in T$, we have
\[(\eta_G^T\circ\epsilon_G^T)(\phi)(t)=
\eta_G^T(\epsilon_G^T(\phi))(t)
=\left(\epsilon_G^T(\phi)\left(\{t^{T'}\}\right)\right)\tilde{\:}\\
=(\varphi(t)\hat{\:})\tilde{\:}=\phi(t).\]

Since we are working in the category $\Sal$, this suffices to show $(\eta_G^T\circ\epsilon_G^T)(\phi)=\phi$.
For the second equation, suppose given $\{t^{T'}\}\in \Res(T)$.  Let $\alpha$ denote
a representative in $\Hom_{\Sa}(T,S(G))$ of $\eta_G^T(\psi)$.  Then, we have
\[((\epsilon_G^T\circ\eta_G^T)(\psi))(\{t^{T'}\})
\alpha(t)\hat{\:}
=(\psi(\{t^{T'}\}))\tilde{\:}\hat{\:}
=\psi(\{t^{T'}\}).\]
\end{proof}

\begin{rem}\label{rem:unitSQ}
For a group $G$, the counit of the adjunction given in Proposition \ref{prop:Resadjoint} is the
morphism $\phi\in \Hom_{\G}(\Res(S_a(G)),G)$ satisfying $\eta_{G}^{S_a(G)}(\phi)=\id_{S_a(G)}$.
That is, \[g\tilde{\:}=\eta_G^{S_a(G)}(\phi)(g\tilde{\:})=(\phi(\{g\tilde{\:}\}))\tilde{\:},\]
so $\phi(\{g\tilde{\:}\})=g$.  Here, we write $\{g\tilde{\:}\}$ rather than
$\{(g\tilde{\:})^{O^{\vartheta}(S_a(G))}\}$ for an element in $\Res(S_a(G))$
since $O^{\vartheta}(S_a(G))$ is trivial.  Clearly, $\phi$ is an isomorphism of groups.

For a scheme $T$, with $T'=O^{\vartheta}(T)$,
the unit of the adjunction is a morphism $\psi:=\eta_{\Res(T)}^{T}(\id_{\Res(T)})$
in $\Hom_{\Sal}(T,S_a(\Res(T)))$, which takes $t$ to $\{t^{T'}\}\tilde{\:}\in S_a(\Res(T))$.
Thus, the kernel of a the unit of the adjunction is the thin residue of $T$.
Now, $\psi$ can be represented by a surjective morphism in
$\Hom_{\Sa}(T,S(\Res(T)))$.  In particular, if $T$ is thin, then
any representative of the unit of the adjunction is an isomorphism of schemes.
But again, we caution that there does not exist
a canonical admissible morphism from $T$ to $S(\Res(T))$, only an algebraic equivalence class
of such morphisms.
\end{rem}

\section{Adjacency algebras of association schemes}\label{sec:adjalg}

Suppose $S$ is a scheme on a finite set $X$.  Let $\Mat_X(\Cx)$ denote the set of matrices with entries
in $\Cx$ and with rows and columns indexed by $X$.
Then each $s\in S$ determines a matrix $\sigma_s\in \Mat_X(\Cx)$, called the {\it adjacency matrix} of $s$.
The $(x,y)$-entry
of $\sigma_s$ is $1$ if $(x,y)\in s$, and $0$ otherwise.  Since each $s$ is nonempty, $\sigma_s$
is nonzero.  Since $S$ is a partition of $X\times X$,
we have $\sum_{t\in T} \sigma_s=J$, where $J\in \Mat_X(\Cx)$ consists entirely of $1$'s.
Of course, $\sigma_{1_X}$ is the identity matrix, and $\sigma_{s^*}$ is the transpose of $\sigma_s$ for
any $s\in S$.  Finally, if $(x,y)\in r$, then for any $p, q\in S$
\[(\sigma_p\circ \sigma_q)(x,y)=\sum_z \sigma_p(x,z)\cdot \sigma_q(z,y)=
\left|\{z\in X:(x,z)\in p\text{ and } (z,y)\in q\}\right|=a_{pq}^r.\]
Therefore, \begin{equation}\label{eqn:sigma}
\sigma_p\circ\sigma_q=\sum_{r\in S} a_{pq}^r \sigma_r.
\end{equation}
It follows that the adjacency matrices span a subalgebra of $\Mat_X(\Cx)$.  This subalgebra
is semisimple since it is closed under transposition.

\begin{defn}\label{def:algebra} Suppose $S$ is a scheme on
a finite set $X$.  The {\it adjacency algebra} of $S$, denoted $\A(S)$, is the span of
the adjacency matrices $\sigma_s\in \Mat_X(\Cx)$.
\end{defn}

\begin{rem}\label{rem:adjacency}
For a finite group $G$, let $\mathbb C G$ denote the group algebra of $G$.
Then the linear transformation $\mathbb C G\to \A(S(G))$ which
takes $g\in G$ to $\sigma_{g\tilde{\:}}$ is an isomorphism of algebras.
Indeed, bijectivity is obvious, and if $gh=k$ in $G$, then Equation \eqref{eqn:structure} from Section
\ref{sec:background}, together with Equation \eqref{eqn:sigma} above,
implies that $\sigma_{g\tilde{\:}}\circ \sigma_{h\tilde{\:}}=\sigma_{k\tilde{\:}}.$
\end{rem}

Now let $\Alg$ denote the category of (unital) algebras over $\Cx$.
We next show how to extend $\A$ to a functor from $\Sal$ to $\Alg$ (see Corollary \ref{cor:Algfunctor} below).
Recall from Lemma \ref{lem:relval} and Corollary \ref{cor:calculation}
that if $\phi$ is an admissible morphism from a scheme $S$
on $X$ to a scheme $T$ on $Y$, and $(\phi(x_0),y)\in \phi(s)$, then there are $n_s^\phi=\frac{n_s}{n_{\phi(s)}}$
elements $x\in x_0s$ such that $\phi(x)=y$.

\begin{lem}\label{lem:algebra} Suppose $S$ and $T$ are schemes on $X$ and $Y$, and
$\phi\in \Hom_\Sa(S,T)$.  Then for any $p,q\in S$ and $t\in T$, we have
\[\sum_{s:\phi(s)=t} a_{pq}^{s}n_s^{\phi} =a_{\phi(p)\phi(q)}^{t}n_p^\phi n_q^\phi.\]
\end{lem}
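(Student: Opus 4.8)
The statement I want to prove is the identity
\[
\sum_{s:\phi(s)=t} a_{pq}^{s}\,n_s^{\phi} = a_{\phi(p)\phi(q)}^{t}\,n_p^\phi\, n_q^\phi
\]
for $p,q\in S$ and $t\in T$. The natural approach is a double-counting argument, exactly parallel to the one used in Corollary \ref{cor:calculation}. Both sides should count the same set of configurations downstairs in $Y$, lifted from configurations upstairs in $X$. The left-hand side organizes the count by which $s\in S$ with $\phi(s)=t$ is used upstairs, while the right-hand side counts directly in $T$ and then corrects by fiber sizes. So my plan is to fix suitable base points and count a set of pairs (or paths) in two ways.

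**The counting set.** First I would fix $y_0\in Y$ with $y_0=\phi(x_0)$ for some chosen $x_0\in X$, and fix a point $y_2\in Y$ with $(y_0,y_2)\in t$. The object to count is the set $N$ of elements $x_2\in X$ with $\phi(x_2)=y_2$, $(x_0,x_2)\in s$ for some $s$ with $\phi(s)=t$, together with an intermediate point $y_1\in Y$ realizing the $\phi(p),\phi(q)$ decomposition of $t$. More precisely, I expect to count triples involving a point $y_1$ with $(y_0,y_1)\in\phi(p)$ and $(y_1,y_2)\in\phi(q)$, and lifts of these through $\phi$. The right-hand side counts this as follows: there are $a_{\phi(p)\phi(q)}^{t}$ choices of $y_1$ (by Definition \ref{def:scheme}(4) applied in $T$, since $(y_0,y_2)\in t$); for each such $y_1$, Lemma \ref{lem:relval} gives $n_p^\phi$ lifts $x_1\in x_0p$ with $\phi(x_1)=y_1$, and then $n_q^\phi$ further lifts $x_2\in x_1 q$ with $\phi(x_2)=y_2$. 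This yields the product $a_{\phi(p)\phi(q)}^{t}\,n_p^\phi n_q^\phi$ as the total count of lifted paths $x_0\to x_1\to x_2$.

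**Recounting upstairs.** For the left-hand side I would organize the same lifted paths by the scheme element $s\in S$ with $(x_0,x_2)\in s$. Each such $s$ necessarily satisfies $\phi(s)=t$, since $\phi$ is a morphism and $(\phi(x_0),\phi(x_2))=(y_0,y_2)\in t$. For a fixed $s$ with $\phi(s)=t$, the number of intermediate $x_1$ with $(x_0,x_1)\in p$ and $(x_1,x_2)\in q$ is exactly $a_{pq}^{s}$ by Definition \ref{def:scheme}(4), for any fixed $x_2$ with $(x_0,x_2)\in s$; and the number of valid endpoints $x_2$ with $\phi(x_2)=y_2$ and $(x_0,x_2)\in s$ is $n_s^\phi$ by Lemma \ref{lem:relval}. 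Multiplying and summing over the relevant $s$ gives $\sum_{s:\phi(s)=t} a_{pq}^{s}\,n_s^\phi$, matching the left-hand side.

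**The main obstacle.** The delicate point, and the one I would be most careful about, is ensuring that the two counts really enumerate the \emph{same} set of lifted paths, so that the middle and outer lifts are counted consistently and each configuration is counted exactly once. In particular I must confirm that every path $x_0\to x_1\to x_2$ counted on the right (built by lifting a downstairs path through $y_1$) automatically has $(x_0,x_2)$ lying in some $s$ with $\phi(s)=t$, so it is captured by the left-hand sum; conversely, every upstairs path counted on the left projects to a legitimate downstairs path through a point $y_1=\phi(x_1)$ with the correct adjacencies. Both directions follow because $\phi$ is a morphism (preserving adjacencies) and admissible (guaranteeing the lifts via Lemma \ref{lem:relval}), but making the fiber bookkeeping line up — that the count $n_s^\phi$ of endpoints and the count $a_{pq}^s$ of midpoints are independent of the chosen representatives — is where I would spend the most care.
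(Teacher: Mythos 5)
Your proposal is correct and follows essentially the same double-counting argument as the paper: the paper's set $\Omega_0$ is exactly your set of lifted paths $x_0\to x_1\to x_2$ with $\phi(x_2)=y_2$, counted once by grouping over $s\in S$ with $\phi(s)=t$ (giving $\sum_s a_{pq}^s n_s^\phi$) and once via the fibration over the midpoints $y_1\in\phi(p)\cap$-paths downstairs (giving $a_{\phi(p)\phi(q)}^t n_p^\phi n_q^\phi$), with Lemma \ref{lem:relval} supplying the fiber sizes in both directions. The only cosmetic difference is that the paper treats $t\notin\phi(S)$ as a separate trivial case because it anchors the count at a pair $(x_1,x_3)\in r$ with $\phi(r)=t$, whereas your setup anchors at $(\phi(x_0),y_2)\in t$ and so covers that case automatically.
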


\begin{proof}
From Lemma \ref{lem:image}, $\phi(S)$ is closed.  Therefore, if
$t\notin \phi(S)$, then the left side of our equation is clearly $0$, and since
$t\notin \phi(p)\phi(q)$ for any $p,q\in S$, we have $a_{\phi(p)\phi(q)}^t=0$, so the right side of the
equation is $0$ as well.  Thus, it suffices to show that for any $p, q, r\in S$, we have
\[\sum_{s:\phi(s)=\phi(r)} a_{pq}^{s}n_s^{\phi} =a_{\phi(p)\phi(q)}^{\phi(r)}n_p^\phi n_q^\phi.\]

Suppose $(x_1,x_3)\in r$.  Let $y_1=\phi(x_1)$, and let $y_3=\phi(x_3)$, so $(y_1,y_3)\in \phi(r)$.
Let $\Omega_0$ be the set of pairs $(x_2',x_3')\in q$ such that $(x_1,x_2')\in p$ and
$x_3'\in \phi^{-1}(y_3)$.  We compute $|\Omega_0|$.
If $(x_2',x_3')\in \Omega_0$, then $(x_1,x_3')\in s$ for some $s$
such that $\phi(s)=\phi(r)$, since $(y_1,y_3)\in \phi(r)$.  Moreover,
for each $s\in S$ such that $\phi(s)=\phi(r)$, we have
$(\phi(x_1),y_3)\in \phi(s)$, so there are $n_s^\phi$ elements $x_3'\in \phi^{-1}(y_3)\cap x_1s$.
For each of these choices of $x_3'$, there are $a_{pq}^s$ elements $x_2'$ such that
$(x_1,x_2')\in p$ and $(x_2',x_3')\in q$.  Thus, we have
\begin{equation}\label{eqn:omega0} |\Omega_0|=\sum_{s\in S:\phi(s)=\phi(r)} a_{pq}^{s}n_s^\phi.\end{equation}

Now let $\Omega_1$ denote the set of elements $y_2\in Y$ such that $(y_1,y_2)\in \phi(p)$ and
$(y_2,y_3)\in \phi(q)$.  Thus we have
\begin{equation}\label{eqn:omega1} |\Omega_1|=a_{\phi(p)\phi(q)}^{\phi(r)}.\end{equation}
For each $(x_2',x_3')\in \Omega_0$, we see that $\phi(x_2')\in \Omega_1$.
Thus, we have a function $f:\Omega_0\to \Omega_1$, where $f(x_2',x_3')=\phi(x_2')$.

We claim that $f$ is surjective, and the preimage of each element in $\Omega_1$ has
$n_p^\phi n_q^\phi$ elements, which, together with Equations \eqref{eqn:omega0} and \eqref{eqn:omega1},
implies the result.
Given $y_2\in \Omega_1$, we have $(\phi(x_1),y_2)=(y_1,y_2)\in \phi(p)$,
so there are $n_p^\phi$ elements $x_2'\in \phi^{-1}(y_2)\cap x_1p$.  For each of these elements,
we have $(\phi(x_2'),y_3)=(y_2,y_3)\in \phi(q)$, so there are $n_q^\phi$ elements
$x_3'\in \phi^{-1}(y_3)\cap x_2'q$.  Thus, for each $y_2\in \Omega_1$,
there are $n_p^\phi n_q^\phi$ pairs $(x_2',x_3')\in f^{-1}(y_2)$.
\end{proof}

\begin{cor}\label{cor:Aphi}
Suppose $S$ and $T$ are schemes on $X$ and $Y$, and $\phi\in \Hom_\Sa(S,T)$.
Then the function $\A(\phi):\A(S)\to \A(T)$ defined on basis elements by
\[\A(\phi)(\sigma_p)=n_p^\phi\sigma_{\phi(p)}\]
is an algebra homomorphism.  Moreover, if $\phi'\in \Hom_{\Sa}(S,T)$ and $\phi\tal \phi'$,
then $\A(\phi)=\A(\phi')$. 
\end{cor}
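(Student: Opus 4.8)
The plan is to verify directly that the linear map $\A(\phi)$ respects both the multiplication and the unit of the adjacency algebras, and then to observe that the defining formula depends on $\phi$ only through the data $\phi(p)$ and $n_p^\phi$, both of which are invariant under algebraic equivalence. Note first that $\A(\phi)$ is well-defined and linear without further comment, since $\{\sigma_p : p\in S\}$ is a basis of $\A(S)$ and each $n_p^\phi$ is a genuine positive integer by Lemma \ref{lem:relval}; so the work is entirely in checking that the map is a unital homomorphism.

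For multiplicativity I would start from Equation \eqref{eqn:sigma}, which gives $\sigma_p\circ\sigma_q=\sum_{s\in S} a_{pq}^s\sigma_s$. Applying $\A(\phi)$ and using linearity yields $\A(\phi)(\sigma_p\circ\sigma_q)=\sum_{s\in S} a_{pq}^s n_s^\phi\,\sigma_{\phi(s)}$. The key maneuver is to collect terms according to the value $t=\phi(s)\in T$, rewriting this as $\sum_{t\in T}\bigl(\sum_{s:\phi(s)=t} a_{pq}^s n_s^\phi\bigr)\sigma_t$. At this point Lemma \ref{lem:algebra} identifies the inner sum as $a_{\phi(p)\phi(q)}^{t}\,n_p^\phi n_q^\phi$, so the whole expression becomes $n_p^\phi n_q^\phi\sum_{t\in T} a_{\phi(p)\phi(q)}^{t}\sigma_t=n_p^\phi n_q^\phi(\sigma_{\phi(p)}\circ\sigma_{\phi(q)})$, again by Equation \eqref{eqn:sigma}. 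This is exactly $\A(\phi)(\sigma_p)\circ\A(\phi)(\sigma_q)$, so multiplicativity holds.

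To see that $\A(\phi)$ is unital, I would observe that the morphism axiom applied to a diagonal pair $(x,x)\in 1_X$ forces $(\phi(x),\phi(x))\in\phi(1_X)$; since $1_Y$ is the element of $T$ containing this diagonal pair, disjointness of the partition gives $\phi(1_X)=1_Y$. Because $n_{1_X}=n_{1_Y}=1$, Corollary \ref{cor:calculation} yields $n_{1_X}^\phi=n_{1_X}/n_{\phi(1_X)}=1$, so $\A(\phi)(\sigma_{1_X})=\sigma_{1_Y}$, the identity of $\A(T)$.

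Finally, for the dependence only on the algebraic equivalence class: if $\phi\tal\phi'$, then $\phi(p)=\phi'(p)$ for every $p\in S$ by Definition \ref{def:alg-equiv}, so $\sigma_{\phi(p)}=\sigma_{\phi'(p)}$, and Corollary \ref{cor:calculation} gives $n_p^\phi=n_p/n_{\phi(p)}=n_p/n_{\phi'(p)}=n_p^{\phi'}$; hence the two maps agree on each basis vector. I do not anticipate a serious obstacle: the genuine combinatorial content, namely the interaction of the structure constants with the multiplicities $n_s^\phi$, has already been isolated in Lemma \ref{lem:algebra}, so the argument is mostly assembly. The one point one must take care not to skip is the unit check, which is easy to overlook precisely because the defining formula carries the potentially nontrivial factor $n_{1_X}^\phi$ that happens to equal $1$.
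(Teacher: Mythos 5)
Your proof is correct and follows essentially the same route as the paper: multiplicativity via linearity, regrouping the sum by $t=\phi(s)$, and invoking Lemma \ref{lem:algebra}, with the unit check via $\phi(1_X)=1_Y$ and Corollary \ref{cor:calculation}, and the equivalence-class claim via Definition \ref{def:alg-equiv} and Corollary \ref{cor:calculation}. The only difference is that you spell out the justification for $\phi(1_X)=1_Y$ and the equality $n_p^\phi=n_p^{\phi'}$, which the paper leaves implicit.
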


\begin{proof}
Since $\A(\phi)$ is defined only on basis elements, linearity is immediate, and we only need to check that
\[\A(\phi)(\sigma_p\cdot \sigma_q)=\A(\phi)(\sigma_p)\cdot\A(\phi)(\sigma_q)\]
holds for any $p,q\in S$, and that
\[\A(\phi)(\sigma_{1_X})=\sigma_{1_Y}.\]
Since $\phi(1_X)=1_Y$, we have
\[\A(\phi)(\sigma_{1_X})=n_{1_X}^\phi\sigma_{\phi(1_X)}=\frac{n_{1_X}}{n_{1_Y}}\sigma_{1_Y}=\sigma_{1_Y}.\]
Also, we have
\begin{eqnarray*}
\A(\phi)(\sigma_p\cdot \sigma_q)& =\A(\phi)(\sum_{r\in S} a_{pq}^r \sigma_r) & \text{definition of multiplication in }\A(S)\\
&=\sum_{r\in S} a_{pq}^r \A(\phi)(\sigma_r) & \text{linearity of }\A(\phi)\\
&=\sum_{r\in S} a_{pq}^r n_r^\phi \sigma_{\phi(r)} & \text{definition of }\A(\phi)\\
&=\sum_{t\in T} \left(\sum_{s:\phi(s)=t} a_{pq}^s n_s^\phi \right) \sigma_t &\text{regrouping terms} \\
&=\sum_{t\in T}\left( a_{\phi(p)\phi(q)}^t n_p^\phi n_q^\phi\right) \sigma_t & \text{Lemma }\ref{lem:algebra}\\
&=n_p^\phi n_q^\phi \sum_{t\in T} a_{\phi(p)\phi(q)}^t \sigma_t & \text{distributivity}\\
&=n_p^\phi  \sigma_{\phi(p)}\cdot n_q^\phi\sigma_{\phi(q)} & \text{definition of multiplication in }\A(T)\\
&=\A(\phi)(\sigma_p)\cdot \A(\phi)(\sigma_q)& \text{definition of }\A(\phi)
\end{eqnarray*}
The last claim is immediate from Definition \ref{def:alg-equiv} and Corollary \ref{cor:calculation}.
\end{proof}

\begin{rem}\label{rem:Han3}
Hanaki \cite[Theorem 3.4]{Han3} proved that if $K$ is a normal closed subset of $S$, then the natural morphism
from $S$ to $S\dm K$ induces an algebra homomorphism of adjacency algebras from $\A(S)$ to $\A(S\dm K)$ taking
$\sigma_s\in \A(S)$ to $\frac{n_s}{n_{s^K}}\sigma_{s^K}\in \A(S\dm K)$.  Thus, one could prove Corollary \ref{cor:Aphi}
as follows.  First, use Theorem \ref{thm:first-isomorphism} to factor an admissible morphism $\phi:S\to T$
as a composite $\io_{\phi(X)}\circ \bar\phi\circ\pi_K$, where $K$ is the kernel of $\phi$.
Then it is easy to check that the isomorphism $\bar\phi$ induces an algebra isomorphism taking $\sigma_{s^K}$ to
$\sigma_{\bar\phi(s^K)}$, and the inclusion $\io_{\phi(X)}$ induces a homomorphism of algebras taking
$\sigma_{\bar\phi(s^K)}$ to $\sigma_{\io_{\phi(X)}(\bar\phi(s^K))}$.  Thus, using Hanaki's result, and
since $\io_{\phi(X)}\circ\bar\phi\circ\pi_K=\phi$,
the morphism from $\A(S)$ to $\A(T)$ taking $\sigma_s$ to $\frac{n_s}{n_{s^K}}\sigma_{\phi(s)}$ is an
algebra homomorphism.  Now, since $\bar\phi$ and $\io_{\phi(X)}$
both preserve valencies, we have $n_{s^K}=n_{\phi(s)}$, so $\frac{n_s}{n_{s^K}}=n_s^{\phi}$ by
Corollary \ref{cor:calculation}.  Thus, this algebra homomorphism coincides with the one defined in Corollary
\ref{cor:Aphi} above.
\end{rem}

\begin{cor}\label{cor:Algfunctor} Suppose $S$, $T$, and $U$ are schemes on $X$, $Y$, and $Z$, and
$\phi\in \Hom_{\Sa}(S,T), \psi\in \Hom_{\Sa}(T,U)$.  Then, referring to the notation introduced in
Corollary \ref{cor:Aphi}, we have
\[\A(\psi\circ \phi)=\A(\psi)\circ\A(\phi):\A(S)\to \A(U).\]  Also,
if $\id_S\in \Hom_{\Sa}(S,S)$ is the identity, then $\A(\id_S):\A(S)\to \A(S)$ is the identity.
Thus, $\A$ is a functor from $\Sa$ to $\Alg$.
\end{cor}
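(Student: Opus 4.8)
The plan is to verify the two functor axioms --- preservation of identities and preservation of composition --- directly on the distinguished basis $\{\sigma_p : p\in S\}$ of each adjacency algebra. This is legitimate because Corollary \ref{cor:Aphi} has already done the substantive work: for each admissible $\phi$ the prescription $\A(\phi)(\sigma_p)=n_p^\phi\,\sigma_{\phi(p)}$ defines an honest algebra homomorphism, and (by its last sentence) one depending only on the algebraic equivalence class of $\phi$. Since each $\A(\phi)$ is linear, two such maps agree as soon as they agree on the $\sigma_p$, so both axioms reduce to a comparison of scalar coefficients on basis elements.

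For the identity axiom I would simply compute $\A(\id_S)(\sigma_p)=n_p^{\id_S}\,\sigma_{\id_S(p)}=n_p^{\id_S}\,\sigma_p$, and then invoke Corollary \ref{cor:calculation} to evaluate $n_p^{\id_S}=n_p/n_{\id_S(p)}=n_p/n_p=1$. Hence $\A(\id_S)(\sigma_p)=\sigma_p$ for every $p$, which is exactly the assertion $\A(\id_S)=\id_{\A(S)}$.

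For the composition axiom I would evaluate each side on a fixed basis element $\sigma_p$. On the one hand $\A(\psi\circ\phi)(\sigma_p)=n_p^{\psi\circ\phi}\,\sigma_{\psi(\phi(p))}$, using $(\psi\circ\phi)(p)=\psi(\phi(p))$; on the other hand,
\[
(\A(\psi)\circ\A(\phi))(\sigma_p)=\A(\psi)\bigl(n_p^\phi\,\sigma_{\phi(p)}\bigr)=n_p^\phi\,n_{\phi(p)}^\psi\,\sigma_{\psi(\phi(p))}.
\]
Both outputs are scalar multiples of the same basis element $\sigma_{\psi(\phi(p))}$, so the two maps coincide precisely when the scalars do, that is, when $n_p^{\psi\circ\phi}=n_p^\phi\,n_{\phi(p)}^\psi$.

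This multiplicativity of the relative valencies is the only real content of the argument, and it is where I expect the (very modest) work to lie; it follows at once from the closed form $n_s^\phi=n_s/n_{\phi(s)}$ of Corollary \ref{cor:calculation}, since the intermediate valency cancels:
\[
n_p^\phi\,n_{\phi(p)}^\psi=\frac{n_p}{n_{\phi(p)}}\cdot\frac{n_{\phi(p)}}{n_{\psi(\phi(p))}}=\frac{n_p}{n_{\psi(\phi(p))}}=n_p^{\psi\circ\phi}.
\]
With both axioms checked, and each $\A(\phi)$ already known to be an algebra homomorphism by Corollary \ref{cor:Aphi}, it follows that $\A$ is a functor from $\Sa$ to $\Alg$; moreover, because $\A(\phi)$ is invariant under algebraic equivalence, this functor descends to one on $\Sal$. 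I anticipate no genuine obstacle here: the one point that could conceivably be delicate --- that relative valencies compose correctly under composition of admissible morphisms --- is dispatched entirely by the explicit quotient formula for $n_s^\phi$.
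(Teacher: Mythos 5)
Your proof is correct, and its overall shape matches the paper's: both reduce the two functor axioms to a comparison of the scalar coefficients $n_p^{\,\bullet}$ on the basis elements $\sigma_p$, with the identity case disposed of by $n_p^{\id}=n_p/n_p=1$. The one place you diverge is in how you establish the key identity $n_p^{\psi\circ\phi}=n_p^\phi\, n_{\phi(p)}^\psi$. You deduce it instantly from the closed form $n_s^\phi=n_s/n_{\phi(s)}$ of Corollary \ref{cor:calculation} by cancelling the intermediate valency; the paper instead proves it by a direct double count, exhibiting a bijection $x\mapsto(x,\phi(x))$ between $\{x: \psi\phi(x)=z,\ (x_0,x)\in s\}$ and the set of pairs $(x,y)$ with $\phi(x)=y$, $\psi(y)=z$, $(x_0,x)\in s$. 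Your route is shorter and entirely legitimate, since Corollary \ref{cor:calculation} applies to the composite $\psi\circ\phi$ once one knows (Lemma \ref{lem:category}) that it is admissible --- a hypothesis you use implicitly and should perhaps cite. The paper's counting argument buys only a combinatorial proof that works directly from the defining property of $n_s^\phi$ in Lemma \ref{lem:relval} without passing through the valency quotient; the mathematical content is the same. Your closing remark that $\A$ descends to $\Sal$ is not part of the statement but agrees with Remark \ref{rem:Afactor}.
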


\begin{proof} Suppose $s\in S$, $x_0\in X$, $z\in Z$, and $(\psi(\phi(x_0)),z)\in\psi(\phi(s))$.  
Then there are $n_{\phi(s)}^\psi$ elements $y\in Y$ such that $\psi(y)=z$
and $(\phi(x_0),y)\in \phi(s)$.  For each such element $y$, there are $n_s^\phi$ elements $x\in X$ such
that $\phi(x)=y$ and $(x_0,x)\in s$.  Thus, 
\[|\{(x,y)\in X\times Y:\phi(x)=y, \psi(y)=z, (x_0,x)\in s, (\phi(x_0),y)\in \phi(s)\}|=n_s^\phi\cdot n_{\phi(s)}^\psi.\]
On the other hand,
\[|\{x\in X:\psi\phi(x)=z, (x_0,x)\in s\}|=n_s^{\psi\phi}.\]
The function taking $x$ to $(x,\phi(x))$ induces a bijection
between the two sets displayed on the left side of these equations,
so $n_s^{\psi\phi}=n_s^\phi\cdot n_{\phi(s)}^\psi$.

Now, we have
\begin{equation*}
\begin{split}
\A(\psi\phi)(\sigma_s)&=n_s^{\psi\phi}\sigma_{\psi\phi(s)}\\
&=n_s^\phi n_{\phi(s)}^\psi \sigma_{\psi\phi(s)}\\
&=n_s^\phi\A(\psi)(\sigma_{\phi(s)})\\
&=\A(\psi)(n_s^\phi \sigma_{\phi(s)})\\
&=\A(\psi)(\A(\phi)(\sigma_s))
\end{split}
\end{equation*}
The second claim follows since $n_s^{\id}=\frac{n_s}{n_s}=1$ for any $s\in S$, and the final statement follows from the first two.
\end{proof}

\begin{rem}\label{rem:Afactor} The functor $\A$ factors through $F:\Sa\to \Sal$ (see Definition
\ref{def:Salg}) by the last claim of Corollary
\ref{cor:Aphi}.  We will denote this factorization by $\Aa:\Sal\to \Alg$.  That is, $\Aa\circ F=\A$.
\end{rem}

\begin{lem}\label{lem:adjacency}
Let $\mathbb C[-]$ denote the functor from the category of finite groups to the category of algebras over
$\mathbb C$.  For each group $G$, let $\eta_G:\mathbb C[G]\to \A(S(G))$ denote the isomorphism
of algebras described in Remark \ref{rem:adjacency}.
Then $\eta$ is a natural isomorphism.
\end{lem}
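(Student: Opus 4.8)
The plan is to note first that, by Remark \ref{rem:adjacency}, each component $\eta_G:\mathbb C[G]\to\A(S(G))$ is already an isomorphism of algebras; consequently the only thing remaining to establish is that $\eta$ is natural with respect to the two functors $\mathbb C[-]$ and $\A\circ S$ from $\G$ to $\Alg$. Explicitly, for every homomorphism $\phi:G\to H$ of finite groups, I would verify that
\[
\A(S(\phi))\circ\eta_G=\eta_H\circ\mathbb C[\phi].
\]
Here $\A\circ S$ makes sense as a functor $\G\to\Alg$ by Corollaries \ref{cor:Sfunctor} and \ref{cor:Algfunctor}, and $\mathbb C[-]$ is the usual group-algebra functor.

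Since all four maps in the naturality square are $\mathbb C$-linear, it suffices to check the identity on the standard basis $\{g:g\in G\}$ of $\mathbb C[G]$. First I would compute the composite $\A(S(\phi))\circ\eta_G$ on such a basis element. By definition $\eta_G(g)=\sigma_{g\tilde{\:}}$, and applying Corollary \ref{cor:Aphi} to the admissible morphism $S(\phi)$ gives
\[
\A(S(\phi))(\sigma_{g\tilde{\:}})=n_{g\tilde{\:}}^{S(\phi)}\,\sigma_{S(\phi)(g\tilde{\:})}.
\]
From the description of $S(\phi)$ in Corollary \ref{cor:Sfunctor}, we have $S(\phi)(g\tilde{\:})=\phi(g)\tilde{\:}$. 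Then I would compute the other composite: $\mathbb C[\phi]$ sends $g$ to $\phi(g)$, and $\eta_H(\phi(g))=\sigma_{\phi(g)\tilde{\:}}$, again by Remark \ref{rem:adjacency}.

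The one point requiring a small argument is the valency coefficient $n_{g\tilde{\:}}^{S(\phi)}$ produced by Corollary \ref{cor:Aphi}, and this is where I expect the only (very minor) obstacle to lie. Because $S(G)$ and $S(H)$ are thin schemes, both $g\tilde{\:}$ and $\phi(g)\tilde{\:}$ have valency $1$, so Corollary \ref{cor:calculation} forces $n_{g\tilde{\:}}^{S(\phi)}=n_{g\tilde{\:}}/n_{\phi(g)\tilde{\:}}=1$. Thus the first composite also sends $g$ to $\sigma_{\phi(g)\tilde{\:}}$, the two composites agree on every basis element, and the naturality square commutes. Combined with the fact that each $\eta_G$ is a bijective algebra homomorphism, this shows that $\eta$ is a natural isomorphism.
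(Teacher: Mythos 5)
Your proposal is correct and follows essentially the same route as the paper: reduce to checking the naturality square on the basis $\{g\}$ of $\mathbb C[G]$, apply Corollary \ref{cor:Aphi} and Corollary \ref{cor:Sfunctor} to compute $\A(S(\phi))(\sigma_{g\tilde{\:}})=n_{g\tilde{\:}}^{S(\phi)}\sigma_{\phi(g)\tilde{\:}}$, and use Corollary \ref{cor:calculation} together with thinness of $S(G)$ and $S(H)$ to see that the coefficient is $1$. This matches the paper's proof step for step.
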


\begin{proof}
We must show that $\A(S(\phi))\circ\eta_G=\eta_K\circ \mathbb C[\phi]$ when $\phi:G\to K$ is a homomorphism
of groups.  Since the elements $g\in G$ form a basis for $\mathbb C[G]$, it suffices to show the equation holds when
both sides are applied to an arbitrary $g\in G$.
Then we have
\begin{eqnarray*}
\A(S(\phi))(\eta_G(g))&=\A(S(\phi))(\sigma_{g\tilde{\:}})&\text{Remark }\ref{rem:adjacency}\\
&=n_{g\tilde{\:}}^{S(\phi)}\sigma_{S(\phi)(g\tilde{\:})}&\text{Corollary }\ref{cor:Aphi}\\
&=n_{g\tilde{\:}}^{S(\phi)}\sigma_{\phi(g)\tilde{\:}}&\text{Corollary }\ref{cor:Sfunctor}\\
&=\frac{n_{g\tilde{\:}}}{n_{\phi(g)\tilde{\:}}}\sigma_{\phi(g)\tilde{\:}}&\text{Corollary }\ref{cor:calculation}\\
&=\sigma_{\phi(g)\tilde{\:}}&\text{since }S(G)\text{ and }S(K)\text{ are thin schemes}\\
&=\eta_K(\phi(g))&\text{Remark }\ref{rem:adjacency}\\
&=\eta_K(\mathbb C[\phi](g))&\text{Definition of }\mathbb C[\phi].
\end{eqnarray*}
\end{proof}

\section{Products of association schemes}\label{sec:product}

If $T$ and $U$ are schemes on $Y$ and $Z$, define
\[\zeta_{T,U}:T\times U\to {\cal P}((Y\times Z)^{\times 2})\] by \[\zeta_{T,U}(t,u)=
\{((y_1,z_1),(y_2,z_2))\in (Y\times Z)^{\times 2} :(y_1,y_2)\in t\text{ and }
(z_1,z_2)\in u\}.\]
If $t\in T$ and $u\in U$, we will write $[t,u]$ for $\zeta_{T,U}(t,u)$.
The set of subsets $[t,u]$ with $t\in T, u\in U$ then forms a scheme on 
$Y\times Z$, cf. \cite[Theorem 7.2.3(i)]{PHZ}.  Of course $[t,u]^*=[t^*,u^*]$
and $1_{Y\times Z}=[1_Y,1_Z]$.
We denote this scheme $T\boxtimes U$, and call it the direct product of the schemes $T$ and $U$.

\begin{rem}\label{rem:product-structure}
Suppose $T_1$ and $T_2$ are schemes on $Y_1$ and $Y_2$, and suppose
$p_1, q_1, r_1\in T_1$ and $p_2, q_2, r_2\in T_2$.  Then 
from \cite[Theorem 7.2.3(ii)]{PHZ}), we have
\[a_{[p_1,p_2]\ [q_1, q_2]}^{[r_1, r_2]}
=a_{p_1 q_1}^{r_1}\cdot a_{p_2 q_2}^{r_2}.\]

It follows that if $t_1\in T_1$ and $t_2\in T_2$, then
\[n_{[t_1,t_2]}
=a_{[t_1,t_2]\ [t_1^*,t_2^*]}^{[1,1]}
=a_{t_1 t_1^*}^{1}\cdot a_{t_2 t_2^*}^{1}=n_{t_1}\cdot n_{t_2}.\]
\end{rem}

\begin{rem}\label{rem:coincide}
If $G$ and $H$ are groups, then the schemes $S(G\times H)$ and $S(G)\boxtimes S(H)$
on $G\times H$ are identical.  Indeed, if $g\in G$ and $h\in H$, then
\[{(g,h)}\tilde{\:}=\{(g_1,h_1),(g_2,h_2)\in (G\times H)^{\times 2}:g_2=g_1g, h_2=h_1h\}
=[g\tilde{\:}, h\tilde{\:}]\]
so both schemes consist of the same subsets of $(G\times H)^{\times 2}$.
\end{rem}

Now suppose $S_1$ and $S_2$ are schemes on $X_1$ and $X_2$, while $T_1$ and $T_2$
are schemes on $Y_1$ and $Y_2$.  Given morphisms $\phi_1$ from $S_1$ to $T_1$ and $\phi_2$
from $S_2$ to $T_2$, we get a product morphism $\phi_1\boxtimes\phi_2$ from $S_1\boxtimes S_2$
to $T_1\boxtimes T_2$.  This is defined as follows:
\[(\phi_1\boxtimes\phi_2)(x_1,x_2)=(\phi_1(x_1),\phi_2(x_2)), \text{ for }x_1\in X_1, x_2\in X_2,\]
\[(\phi_1\boxtimes\phi_2)([s_1,s_2])
=[\phi_1(s_1),\phi_2(s_2)],\text{ for }s_1\in S_1, s_2\in S_2.\]

\begin{lem}\label{lem:product} Suppose $S_1, S_2, T_1$ and $T_2$ are schemes on finite sets
$X_1, X_2, Y_1$ and $Y_2$.  If $\phi_i\in \Hom_{\Sa}(S_i,T_i)$ for $i\in \{1,2\}$,
then $\phi_1\boxtimes\phi_2$ is an admissible morphism
from $S_1\boxtimes S_2$ to $T_1\boxtimes T_2$.
\end{lem}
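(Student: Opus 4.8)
The plan is to verify first that $\phi_1\boxtimes\phi_2$ is a morphism of schemes in the sense of Definition \ref{def:morphism}, and then to check the admissibility condition of Definition \ref{def:admissible}; both parts reduce to handling the two coordinates separately. The only structural fact I will need is the defining description of the product scheme: a pair $((x_1,x_2),(x_1',x_2'))$ lies in $[s_1,s_2]$ precisely when $(x_1,x_1')\in s_1$ and $(x_2,x_2')\in s_2$, and similarly for $T_1\boxtimes T_2$.

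For the morphism condition, I would suppose $((x_1,x_2),(x_1',x_2'))\in [s_1,s_2]$, so that $(x_1,x_1')\in s_1$ and $(x_2,x_2')\in s_2$. Since each $\phi_i$ is a morphism, $(\phi_1(x_1),\phi_1(x_1'))\in \phi_1(s_1)$ and $(\phi_2(x_2),\phi_2(x_2'))\in \phi_2(s_2)$. By the coordinatewise description of elements of $T_1\boxtimes T_2$, this says exactly that $((\phi_1(x_1),\phi_2(x_2)),(\phi_1(x_1'),\phi_2(x_2')))\in [\phi_1(s_1),\phi_2(s_2)]=(\phi_1\boxtimes\phi_2)([s_1,s_2])$, which is what the morphism condition requires. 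For admissibility, I would suppose $((\phi_1\boxtimes\phi_2)(x_1,x_2),(y_1,y_2))\in (\phi_1\boxtimes\phi_2)([s_1,s_2])=[\phi_1(s_1),\phi_2(s_2)]$. Unpacking coordinatewise yields $(\phi_1(x_1),y_1)\in \phi_1(s_1)$ and $(\phi_2(x_2),y_2)\in \phi_2(s_2)$. I then apply admissibility of $\phi_1$ to obtain $x_1'\in X_1$ with $\phi_1(x_1')=y_1$ and $(x_1,x_1')\in s_1$, and admissibility of $\phi_2$ to obtain $x_2'\in X_2$ with $\phi_2(x_2')=y_2$ and $(x_2,x_2')\in s_2$. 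The pair $(x_1',x_2')$ is then the required witness, since $(\phi_1\boxtimes\phi_2)(x_1',x_2')=(y_1,y_2)$ and $((x_1,x_2),(x_1',x_2'))\in [s_1,s_2]$.

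I do not expect any genuine obstacle here: once the coordinatewise membership criterion for $[s_1,s_2]$ is in hand, the argument is entirely mechanical, as the product is being glued together out of two independent one-factor admissibility arguments. The only point requiring attention is bookkeeping, namely keeping the two witnesses $x_1'$ and $x_2'$ produced by the separate applications of admissibility distinct and then confirming that they assemble into a single witness $(x_1',x_2')$ in $X_1\times X_2$. This lemma is the technical input that will later let us regard $\boxtimes$ as a functor compatible with the passage to adjacency algebras in Section \ref{sec:product}.
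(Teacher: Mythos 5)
Your proposal is correct and follows essentially the same route as the paper: admissibility is verified coordinatewise by unpacking membership in $[\phi_1(s_1),\phi_2(s_2)]$ and applying admissibility of each $\phi_i$ to produce the witnesses $x_1'$ and $x_2'$, exactly as in the paper's proof. The only difference is that you also spell out the (routine) verification that $\phi_1\boxtimes\phi_2$ is a morphism of schemes, which the paper leaves implicit.
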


\begin{proof}
Suppose \[((\phi_1\boxtimes\phi_2)(x_1,x_2),(y_1,y_2))\in 
(\phi_1\boxtimes \phi_2)([s_1,s_2]).\]
By definition, this means
$(\phi_1(x_1),y_1)\in \phi_1(s_1)$ and $(\phi_2(x_2),y_2)\in \phi_2(s_2)$.  Since $\phi_1$ and $\phi_2$
are admissible, we can find $x'_1\in X_1$ and $x'_2\in X_2$ such that $\phi_1(x'_1)=y_1$, $\phi_2(x'_2)=y_2$,
$(x_1,x_1')\in s_1$ and $(x_2,x_2')\in s_2$.  But then, we have
\[(\phi_1\boxtimes\phi_2)(x'_1,x'_2)=(y_1,y_2)\text{ and }
((x_1,x_2),(x'_1,x'_2))\in [s_1,s_2].\]
\end{proof}

\begin{rem}\label{rem:prodfunctor} Suppose $S_i$, $T_i$, and $U_i$ are schemes on
finite sets $X_i$, $Y_i$, and $Z_i$
for $i\in \{1,2\}.$
If $\phi_i\in \Hom_{\Sa}(S_i,T_i)$ and $\psi_i\in \Hom_{\Sa}(T_i,U_i)$ for $i\in \{1,2\}$,
then it is easy to see that
\[(\psi_1\boxtimes\psi_2)\circ(\phi_1\boxtimes\phi_2)=(\psi_1\circ\phi_1)\boxtimes (\psi_2\circ\phi_2).\]
Also, if $\id_{S_i}$ is the identity on $S_i$ for $i\in \{1,2\}$, then it is easy to see that
$\id_{S_1}\boxtimes\id_{S_2}=\id_{S_1\boxtimes S_2}$.  Thus, we have a product
functor $\Sa\times \Sa\to \Sa$
taking the pair of schemes $(S_1, S_2)$ to $S_1\boxtimes S_2$ and the pair of morphisms
$(\phi_1,\phi_2)$ to $\phi_1\boxtimes\phi_2$.
\end{rem}

We do {\it not} claim that $S_1\boxtimes S_2$ is a product object in the category $\Sa$.
That is, if $T$ is a scheme on a finite set $Y$, and we are given morphisms from $\phi_i\in \Hom_{\Sa}(T,S_i)$,
we do not claim that there is a morphism in $\Hom_{\Sa}(T,S_1\boxtimes S_2)$ satisfying the
usual universal property.  Indeed, this is generally not the case, as we will see in Lemma
\ref{lem:Delta} below.

\begin{defn}\label{def:Delta}
Suppose $S$ is a scheme on a finite set $X$.  Let $\Delta_S$ be the morphism from $S$ to $S\boxtimes S$
defined as follows:
\[\Delta_S(x)=(x,x)\text{ for }x\in X, \Delta_S(s)=[s,s]\text{ for }s\in S.\]
\end{defn}

Indeed, if $(x,x')\in s$, then $((x,x),(x',x'))\in \Delta_S(s)$, so $\Delta_S$ is a morphism of schemes.

\begin{lem}\label{lem:Delta} Given a scheme $S$ on a finite set $X$, the morphism $\Delta_S$ from $S$ to $S\boxtimes S$
is admissible if and only if $S$ is thin.
\end{lem}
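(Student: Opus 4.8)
The plan is to unwind Definition \ref{def:admissible} for the specific morphism $\Delta_S$ and to observe that the diagonal structure of $\Delta_S$ forces a strong constraint on the target point, one that turns out to be exactly the statement that every $s \in S$ has valency $1$. So both directions of the equivalence will fall out of a single computation once the admissibility condition is correctly translated.

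First I would translate the hypothesis of admissibility. Fix $x \in X$, a point $(y_1, y_2) \in X \times X$, and $s \in S$ with $(\Delta_S(x), (y_1,y_2)) \in \Delta_S(s)$. Since $\Delta_S(x) = (x,x)$ and $\Delta_S(s) = [s,s]$, unwinding the definition of $[s,s] = \zeta_{S,S}(s,s)$ shows that this condition is equivalent to the two statements $(x, y_1) \in s$ and $(x, y_2) \in s$; that is, $y_1, y_2 \in xs$.

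Next I would examine what admissibility actually demands in this situation. It requires an element $x' \in X$ with $\Delta_S(x') = (y_1, y_2)$ and $(x, x') \in s$. But $\Delta_S(x') = (x', x')$, so the equation $\Delta_S(x') = (y_1, y_2)$ can hold only when $y_1 = y_2$, in which case necessarily $x' = y_1 = y_2$, and then $(x, x') = (x, y_1) \in s$ holds automatically. Consequently, admissibility of $\Delta_S$ is equivalent to the following condition: whenever $(x, y_1) \in s$ and $(x, y_2) \in s$, one has $y_1 = y_2$. Equivalently, for every $x$ and every $s$ the set $xs$ has at most one element.

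Finally I would convert this into a statement about valencies. Since $n_s$ counts the number of elements of $xs$ and is always positive, the condition ``$|xs| \le 1$ for all $x$ and $s$'' is exactly $n_s = 1$ for all $s \in S$, which by the characterization following Definition \ref{def:thin} means precisely that $S$ is thin. For the converse, if $S$ is thin then each $xs$ is a singleton, so $y_1, y_2 \in xs$ forces $y_1 = y_2$, and admissibility follows immediately. I expect no serious obstacle here; the only point requiring care is reading off correctly from the definition of $[s,s]$ that under $\Delta_S(s)$ the diagonal point $(x,x)$ can be related only to genuinely diagonal points $(y,y)$, which is exactly what pins admissibility to thinness.
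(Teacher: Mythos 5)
Your proof is correct and follows essentially the same route as the paper: both arguments reduce admissibility of $\Delta_S$ to the statement that $(x,y_1),(x,y_2)\in s$ forces $y_1=y_2$ (since the required preimage $x'$ must satisfy $(x',x')=(y_1,y_2)$), and then identify this with $n_s=1$ for all $s$, i.e.\ thinness. No gaps.
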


\begin{proof} Suppose given $s\in S$ and $x_0\in X$, and suppose $x_1,x_2\in x_0s$.
Then \[(\Delta_S(x_0),(x_1,x_2))=((x_0,x_0),(x_1,x_2))\in [s,s]=\Delta_S(s).\]
If $\Delta_S$ is admissible, then there exists $x\in X$
such that $\Delta_S(x)=(x_1,x_2)$, whence $x_1=x=x_2$.  Thus, if $\Delta_S$ is admissible, then $n_s=1$ for all $s\in S$.
Conversely, if $n_s=1$ for all $s\in S$, then $(\Delta_S(x_0),(x_1,x_2))\in \Delta_S(s)$ implies
$(x_0,x_1)\in s$ and $(x_0,x_2)\in s$,
so $x_1=x_2$.  This implies $\Delta_S(x_1)=(x_1,x_2)$ and $(x_0,x_1)\in s$, so $\Delta_S$ is admissible.
\end{proof}

\begin{lem}\label{lem:Delta'} Suppose $S$ is a scheme on a finite set $X$ and $T$ is a thin scheme on
a finite set $Y$, and suppose
$\phi$ is a morphism from $S$ to $T$.  Then we have
\[(\phi\times\id_S)\circ\Delta_S\in \Hom_{\Sa}(S,T\times S).\]
\end{lem}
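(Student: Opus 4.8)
The plan is to compute the composite $\Phi:=(\phi\boxtimes\id_S)\circ\Delta_S$ explicitly on points and on scheme elements, note that it is a morphism of schemes (being a composite of two morphisms), and then verify the admissibility condition of Definition \ref{def:admissible} directly. The point to flag at the outset is that one \emph{cannot} deduce this from Lemma \ref{lem:category}: the diagonal $\Delta_S$ is admissible only when $S$ is thin (Lemma \ref{lem:Delta}), whereas here $S$ is arbitrary. Moreover the codomain $T\boxtimes S$ is not thin in general (by Remark \ref{rem:product-structure}, $n_{[\phi(s),s]}=n_{\phi(s)}\,n_s=n_s$), so Lemma \ref{lem:to-thin} does not apply to it either. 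Thus, although the second factor $\phi\boxtimes\id_S$ is admissible (since $\phi$ is admissible by Lemma \ref{lem:to-thin} and then Lemma \ref{lem:product} applies), the composite genuinely requires a hands-on check.

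First I would unwind the definitions to record that $\Phi(x)=(\phi(x),x)$ for $x\in X$ and $\Phi(s)=[\phi(s),s]$ for $s\in S$. To test admissibility I would fix $x\in X$, a point $(y',x')\in Y\times X$, and $s\in S$ with $(\Phi(x),(y',x'))\in\Phi(s)$, and seek $x''\in X$ with $\Phi(x'')=(y',x')$ and $(x,x'')\in s$. Expanding $((\phi(x),x),(y',x'))\in[\phi(s),s]$ through the definition of $\boxtimes$ yields exactly the two conditions $(\phi(x),y')\in\phi(s)$ and $(x,x')\in s$.

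The crux is to show $\phi(x')=y'$, so that the second-coordinate witness $x'$ handed to us by the diagonal also fixes the first coordinate. Since $\phi$ is a morphism, $(x,x')\in s$ gives $(\phi(x),\phi(x'))\in\phi(s)$. Because $T$ is thin, $\phi(s)$ is a thin element, i.e.\ $n_{\phi(s)}=1$, so there is a unique $z\in Y$ with $(\phi(x),z)\in\phi(s)$; comparing $(\phi(x),\phi(x'))\in\phi(s)$ with $(\phi(x),y')\in\phi(s)$ forces $y'=\phi(x')$. Taking $x''=x'$ then gives $\Phi(x'')=(\phi(x'),x')=(y',x')$ together with $(x,x'')=(x,x')\in s$, which is what admissibility requires.

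I expect the only real obstacle to be conceptual rather than computational: recognizing that admissibility of $\Phi$ cannot be obtained formally from its factors, and then seeing that thinness of $T$ collapses the free first coordinate $y'$ onto $\phi(x')$, so a single witness $x''=x'$ discharges both requirements at once. Once that observation is in place, everything reduces to routine unwinding of the product construction $\boxtimes$.
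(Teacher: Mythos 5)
Your proof is correct and is essentially the same as the paper's: both unwind $(\phi\boxtimes\id_S)\circ\Delta_S$ on points and scheme elements, and use thinness of $\phi(s)$ to force the first coordinate $y'$ to equal $\phi(x')$, so that the witness $x'$ from the second coordinate serves for both. Your preliminary observation that admissibility cannot be deduced formally from the factors is accurate and matches the paper's motivation for including this lemma separately from Lemma \ref{lem:Delta}.
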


\begin{proof}
Let $\delta_S^\phi=(\phi\times\id_S)\circ \Delta_S$.  Thus, $\delta_S^\phi(x)=(\phi(x),x)$ and $\delta_S^\phi(s)=[\phi(s),s]$.
Suppose $(\delta_S^\phi(x_0),(y,x))\in \delta_S^\phi(s)$.  Then $(\phi(x_0),y)\in \phi(s)$ and $(x_0,x)\in s$. 
Since $\phi$ is a morphism of schemes, $(x_0,x)\in s$ implies
$(\phi(x_0),\phi(x))\in \phi(s)$.  Thus, since $\phi(s)\in T$ is thin and $(\phi(x_0),y)\in \phi(s)$,
we must have $y=\phi(x)$.  Therefore, $\delta_S^\phi(x)=(\phi(x),x)=(y,x)$
and $(x_0,x)\in s$, so $\delta_S^\phi$ is admissible.
\end{proof}

\begin{rem}\label{rem:Delta}
If $S$ is thin, then by Lemma \ref{lem:Delta'}, $\Delta_S$ is admissible, since we may choose $\phi=\id_S$.
If $G$ is a finite group, then let $\Delta_G:G\to G\times G$ be given by $\Delta_G(g)=(g,g)$.
Thus, we have two admissible morphisms
$\Delta_{S(G)}$ and $S(\Delta_G)$ from $S(G)$ to $S(G\times G)=S(G)\boxtimes S(G)$
(see Remark \ref{rem:coincide}).  If $g\in G$, then by Corollary \ref{cor:Sfunctor},
$S(\Delta_G)(g)=(g,g)$ and $S(\Delta_G)(g\tilde{\:})={(g,g)}\tilde{\:}$.  Likewise, by Definition
\ref{def:Delta}, $\Delta_{S(G)}(g)=(g,g)$ and $\Delta_{S(G)}(g\tilde{\:})=[g\tilde{\:}, g\tilde{\:}]$.
By Remark \ref{rem:coincide}, $[g\tilde{\:}, g\tilde{\:}]={(g,g)}\tilde{\:}$,
so $S(\Delta_G)=\Delta_{S(G)}$. 
\end{rem}

We now show that $\A$ takes products of schemes to tensor products of algebras.

\begin{prop}\label{prop:prod}
Suppose $S$ and $T$ are schemes on $X$ and $Y$.  Then there is a unique algebra isomorphism
$\mu_{S,T}:\A(S)\otimes \A(T)\to \A(S\boxtimes T)$ satisfying
\begin{equation}\label{eqn:mu}
\mu_{S,T}(\sigma_s\otimes \sigma_t)=\sigma_{[s,t]}
\end{equation}
Moreover, the homomorphisms $\mu_{S,T}$ are natural; more precisely,
if $\phi\in \Hom_{\Sa}(S,S')$ and $\psi\in \Hom_{\Sa}(T,T')$, then
\[\A(\phi\boxtimes\psi)\circ\mu_{S,T}=\mu_{S',T'}\circ (\A(\phi)\otimes \A(\psi)).\]
\end{prop}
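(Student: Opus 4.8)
The plan is to treat the existence and uniqueness of $\mu_{S,T}$ first, and then naturality. For the first part, I would observe that $\{\sigma_s:s\in S\}$ and $\{\sigma_t:t\in T\}$ are bases of $\A(S)$ and $\A(T)$ by Definition \ref{def:algebra}, so the elements $\sigma_s\otimes\sigma_t$ form a basis of $\A(S)\otimes\A(T)$. Since every element of $S\boxtimes T$ is $[s,t]$ for a unique pair $(s,t)\in S\times T$, the matrices $\sigma_{[s,t]}$ form a basis of $\A(S\boxtimes T)$. Thus Equation (\ref{eqn:mu}) prescribes $\mu_{S,T}$ on a basis, yielding a unique linear bijection; uniqueness of any linear map satisfying (\ref{eqn:mu}) is automatic for the same reason.

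The substantive point for the first part is that $\mu_{S,T}$ respects the algebra structure. First, the identity of $\A(S)\otimes\A(T)$ is $\sigma_{1_X}\otimes\sigma_{1_Y}$, and $\mu_{S,T}$ sends it to $\sigma_{[1_X,1_Y]}=\sigma_{1_{X\times Y}}$, the identity of $\A(S\boxtimes T)$. For multiplicativity it suffices to check on basis elements. Expanding $(\sigma_p\otimes\sigma_q)(\sigma_{p'}\otimes\sigma_{q'})$ in the tensor-product algebra via Equation (\ref{eqn:sigma}) and then applying $\mu_{S,T}$ gives $\sum_{r,r'}a_{pp'}^r a_{qq'}^{r'}\sigma_{[r,r']}$, while expanding $\sigma_{[p,q]}\sigma_{[p',q']}$ directly in $\A(S\boxtimes T)$ gives $\sum_{r,r'}a_{[p,q]\,[p',q']}^{[r,r']}\sigma_{[r,r']}$. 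These agree by the multiplicativity of structure constants recorded in Remark \ref{rem:product-structure}. Hence $\mu_{S,T}$ is a bijective algebra homomorphism, and so an isomorphism.

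For naturality, both composites are linear, so it again suffices to compare them on $\sigma_s\otimes\sigma_t$. Pushing the left-hand composite through Corollary \ref{cor:Aphi} yields $n_{[s,t]}^{\phi\boxtimes\psi}\,\sigma_{[\phi(s),\psi(t)]}$, while the right-hand composite yields $n_s^\phi n_t^\psi\,\sigma_{[\phi(s),\psi(t)]}$, using the definition of $\A(\phi)$, $\A(\psi)$ and then (\ref{eqn:mu}). The two therefore coincide precisely when the scalar identity $n_{[s,t]}^{\phi\boxtimes\psi}=n_s^\phi n_t^\psi$ holds, and this is the crux of the argument: it is exactly the point where the box product must interact correctly with the relative valencies. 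I would establish it by rewriting $n_{[s,t]}^{\phi\boxtimes\psi}=n_{[s,t]}/n_{[\phi(s),\psi(t)]}$ via Corollary \ref{cor:calculation}, then applying the valency formula $n_{[a,b]}=n_a n_b$ from Remark \ref{rem:product-structure} to both numerator and denominator, so that the quotient factors as $(n_s/n_{\phi(s)})(n_t/n_{\psi(t)})=n_s^\phi n_t^\psi$. The one subtlety to keep in mind is that $\A(\phi\boxtimes\psi)$ acts through the single relative valency of the element $[s,t]$, so the entire naturality statement collapses to the multiplicativity of valencies under $\boxtimes$, which the preceding remarks supply.
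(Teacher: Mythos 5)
Your proposal is correct and follows essentially the same route as the paper: the basis/dimension argument for the linear isomorphism, multiplicativity via Remark \ref{rem:product-structure}, and naturality reduced to the scalar identity $n_{[s,t]}^{\phi\boxtimes\psi}=n_s^\phi n_t^\psi$ via Corollary \ref{cor:calculation} and the multiplicativity of valencies. The only point worth making explicit is the citation of Lemma \ref{lem:product} to guarantee that $\phi\boxtimes\psi$ is admissible, so that $\A(\phi\boxtimes\psi)$ is defined at all.
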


\begin{proof}
The set $\{\sigma_s\otimes \sigma_t\}_{s\in S, t\in T}$ is a basis for
$\A(S)\otimes \A(T)$ as a complex vector space.  Thus, there is a unique
linear transformation $\mu_{S,T}$ satisfying equation \eqref{eqn:mu}.  We are left to show
that $\mu_{S,T}$ is an isomorphism of vector spaces and a homomorphism of algebras.

To see that $\mu_{S,T}$ is an isomorphism of vector spaces, first observe that
$\mu_{S,T}$ is surjective, since each scheme element in $S\boxtimes T$ is of the form
$[s,t]$ for some $s\in S$ and $t\in T$.  Since the dimension
of the domain and codomain of $\mu_{S,T}$ are both equal to $|S|\cdot |T|$,
$\mu_{S,T}$ is an isomorphism.

To see that $\mu_{S,T}$ is a homomorphism of algebras, first, note that
\[\mu_{S,T}(\sigma_{1_X}\otimes \sigma_{1_Y})=\sigma_{[1_X,1_Y]}=\sigma_{1_{X\times Y}},\]
so $\mu_{S,T}$ preserves the multiplicative identity.
Now, suppose $s_1, s_2\in S$ and $t_1,t_2\in T$.  Then
\begin{equation*}
\begin{split}
\mu_{S,T}\left((\sigma_{s_1}\otimes \sigma_{t_1})\cdot (\sigma_{s_2}\otimes \sigma_{t_2})\right)
&=\mu_{S,T}\left(\left(\sigma_{s_1}\cdot \sigma_{s_2}\right)\otimes
 \left(\sigma_{t_1}\cdot \sigma_{t_2}\right)\right)\\
&=\mu_{S,T}\left(\left(\sum_{s\in S} a_{s_1s_2}^s \sigma_s\right)\otimes
\left(\sum_{t\in T} a_{t_1t_2}^t\sigma_t\right)\right)\\
&=\sum_{s\in S, t\in T} a_{s_1s_2}^s a_{t_1t_2}^t \mu_{S,T}(\sigma_s\otimes\sigma_t)\\
&=\sum_{[s,t]\in S\boxtimes T} a_{[s_1, t_1]\ [s_2,t_2]}^{[s,t]}
\sigma_{[s,t]}\quad  (\text{cf. Remark } \ref{rem:product-structure})\\
&=\sigma_{[s_1,t_1]}\cdot \sigma_{[s_2,t_2]}\\
&=\mu_{S,T}(\sigma_{s_1}\otimes \sigma_{t_1})\cdot \mu_{S,T}(\sigma_{s_2}\otimes \sigma_{t_2}).
\end{split}
\end{equation*}

Lastly, we prove naturality.  Note that $\phi\boxtimes\psi$ is admissible
by Lemma \ref{lem:product}.  Now, if $s\in S$ and $t\in T$, then
we have
\begin{equation*}
\begin{split}
(\mu_{S',T'}\circ (\A(\phi)\otimes \A(\psi)))(\sigma_s\otimes\sigma_t)&=
\mu_{S',T'}(n_s^\phi \sigma_{\phi(s)}\otimes n_t^\psi \sigma_{\psi(t)})\quad
(\text{Corollary }\ref{cor:Aphi})\\
&=(n_s^\phi\cdot n_t^\psi)\sigma_{[\phi(s),\psi(t)]}\\
&=\frac{n_s\cdot n_t}{n_{\phi(s)}\cdot n_{\psi(t)}}\sigma_{[\phi(s),\psi(t)]}
\quad
(\text{Corollary }\ref{cor:calculation})\\
&=\frac{n_{[s,t]}}{n_{\phi\boxtimes\psi([s,t])}}\sigma_{[\phi(s),\psi(t)]}
\quad
(\text{Remark }\ref{rem:product-structure})\\
&=n_{[s,t]}^{\phi\boxtimes\psi}\sigma_{(\phi\boxtimes\psi)([s,t])}
\quad 
(\text{Corollary }\ref{cor:calculation})\\
&=\A(\phi\boxtimes\psi)(\sigma_{[s,t]})\\
&=(\A(\phi\boxtimes\psi)\circ\mu_{S,T})(\sigma_s\otimes\sigma_t).
\end{split}
\end{equation*}
\end{proof}

\section{Hopf structures}\label{sec:Hopf}

We recall that a (complex) bialgebra is a unital algebra $A$ over $\mathbb C$,
equipped with a comultiplication $\Delta:A\to A\otimes A$ and a counit $\epsilon:A\to \mathbb C$,
both of which are algebra homomorphisms, satisfying
coassociativity and counitality conditions.  Here, coassociativity means that
\[(\Delta\otimes \id_A)\circ \Delta=(\id_A\otimes \Delta)\circ\Delta\]
and counitality means that $(\epsilon\otimes \id_A)\circ\Delta$ takes $a\in A$ to
$1\otimes a\in \mathbb C\otimes A$.  We say that $A$ is a Hopf algebra if it also comes
equipped with a (necessarily unique) antipode map $S:A\to A$ such that
\[\nabla\circ (1\otimes S)\circ \Delta=\nabla\circ (S\otimes 1)\circ \Delta=\eta\circ\epsilon\]
where $\nabla:A\otimes A\to A$ is multiplication and $\eta:\mathbb C\to A$ is the unit.
We say $A$ is cocommutative if $\tau\circ\Delta=\Delta$, where $\tau:A\otimes A\to A\otimes A$
takes $a\otimes b$ to $b\otimes a$.
See \cite{ABE} for more details on Hopf algebras.  The following is
Example 2.7 in \cite{ABE}.

\begin{example}\label{ex:group-Hopf}
For a finite group $G$, $\mathbb C[G]$ is a Hopf algebra with comultiplication $\Delta_G$
taking $g$ to $g\otimes g$, counit $\epz_G$ taking $g$ to $1$, and antipode $S_G$ taking $g$ to $g^{-1}$.
The comultiplication is clearly cocommutative.
\end{example}

\begin{prop}\label{prop:Hopf} If $T$ is a thin scheme, then the algebra $\A(T)$ has the structure of a
cocommutative Hopf algebra.  The counit $\epz_T$ takes $\sigma_t$ to $1$; the antipode
$S_T$ takes $\sigma_t$ to $\sigma_{t^*}$; and the
comultiplication $\Delta_T$ takes $\sigma_t$ to $\sigma_t\otimes\sigma_t$.  This Hopf algebra
is isomorphic to the Hopf algebra $\mathbb C[\Res(T)]$ of Example \ref{ex:group-Hopf}.
\end{prop}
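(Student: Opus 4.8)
The plan is to prove the result by transport of structure, reducing everything to the group-algebra case of Example \ref{ex:group-Hopf}. Write $G=\Res(T)$. Since $T$ is thin, every generator $s^*s$ of its thin residue equals $\{1\}$, so $O^\vartheta(T)$ is trivial; hence by Remark \ref{rem:unitSQ} a representative $\beta$ of the unit of the $(\Res,S_a)$-adjunction is an isomorphism of schemes $\beta\colon T\to S(G)$, sending each $t\in T$ to $g_t\tilde{\:}$, where $g_t=\{t^{O^\vartheta(T)}\}\in\Res(T)$ is the group element determined by $t$.

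First I would assemble an algebra isomorphism $\Theta\colon\Cx[G]\to\A(T)$. By functoriality of $\A$ (Corollary \ref{cor:Algfunctor}), applied to $\beta$ and to its inverse $\beta^{-1}$ (admissible by Lemma \ref{lem:iso}), the map $\A(\beta)\colon\A(T)\to\A(S(G))$ is an algebra isomorphism. Because $\beta$ is an isomorphism of thin schemes, each valency factor satisfies $n_t^\beta=n_t/n_{\beta(t)}=1$ by Corollary \ref{cor:calculation}, so the formula of Corollary \ref{cor:Aphi} gives $\A(\beta)(\sigma_t)=\sigma_{g_t\tilde{\:}}$. Composing with the inverse of the algebra isomorphism $\eta_G\colon\Cx[G]\to\A(S(G))$, $g\mapsto\sigma_{g\tilde{\:}}$, of Remark \ref{rem:adjacency}, I set $\Theta=\A(\beta)^{-1}\circ\eta_G$; tracking a basis element then shows $\Theta(g_t)=\sigma_t$.

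Next I would transport the Hopf structure. Since $\Theta$ is an algebra isomorphism and $\Cx[G]$ is a cocommutative Hopf algebra (Example \ref{ex:group-Hopf}), the assignments $\Delta_T=(\Theta\otimes\Theta)\circ\Delta_G\circ\Theta^{-1}$, $\epz_T=\epz_G\circ\Theta^{-1}$, and $S_T=\Theta\circ S_G\circ\Theta^{-1}$ endow $\A(T)$ with a cocommutative Hopf-algebra structure for which $\Theta$ is an isomorphism of Hopf algebras, exhibiting $\A(T)\iso\Cx[\Res(T)]$; coassociativity, counitality, and the antipode axiom are then inherited for free. It remains only to verify the stated formulas on the basis $\{\sigma_t\}$, which is immediate from $\Theta(g_t)=\sigma_t$: the comultiplication sends $\sigma_t=\Theta(g_t)$ to $(\Theta\otimes\Theta)(g_t\otimes g_t)=\sigma_t\otimes\sigma_t$; the counit sends $\sigma_t$ to $\epz_G(g_t)=1$; and since $g_t^{-1}=g_{t^*}$ in $G$ (because $tt^*=\{1\}=t^*t$ for a thin element), the antipode sends $\sigma_t$ to $\Theta(g_{t^*})=\sigma_{t^*}$.

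I do not expect a genuinely hard obstacle here: all the substantive input—functoriality of $\A$, the algebra isomorphism $\A(S(G))\iso\Cx[G]$, and the scheme isomorphism $T\iso S(\Res(T))$—has already been established, so the only point demanding care is the bookkeeping of the valency normalizations $n_t^\phi$ from Corollary \ref{cor:Aphi}, which must be checked to collapse to $1$ (precisely because $T$ is thin) so that no spurious scalars enter $\Theta$ and the three formulas come out exactly as stated. As an alternative to transport, one could instead build the comultiplication intrinsically as $\mu_{T,T}^{-1}\circ\A(\Delta_T)$, using the diagonal morphism $\Delta_T$ (admissible for thin $T$ by Lemma \ref{lem:Delta}) and the isomorphism $\mu_{T,T}$ of Proposition \ref{prop:prod}, deducing coassociativity from the scheme-level identity $(\Delta_T\boxtimes\id_T)\circ\Delta_T=(\id_T\boxtimes\Delta_T)\circ\Delta_T$ together with naturality of $\mu$; but the transport argument is shorter, since it secures all the Hopf axioms simultaneously.
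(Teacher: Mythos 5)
Your proposal is correct and follows essentially the same route as the paper: both use the unit of the $(\Res,S_a)$-adjunction (an isomorphism since $O^\vartheta(T)$ is trivial for thin $T$) together with the natural isomorphism $\eta_{\Res(T)}$ of Lemma \ref{lem:adjacency} to identify $\A(T)$ with $\Cx[\Res(T)]$ as algebras, and then transport the Hopf structure, checking that the valency factors collapse to $1$. Your map $\Theta$ is just the inverse of the paper's $\Phi$, so the two arguments coincide.
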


\begin{proof}
Let $\phz$ be a representative in $\Hom_{\Sa}(T,S(\Res(T)))$ of the unit of the $(\Res,S_a)$-adjunction
in $\Hom_{\Sal}(T,S_a(\Res(T)))$, described in Remark \ref{rem:unitSQ}.  Thus,
$\phz(t)=\{t^{T'}\}\tilde{\:}=\{t\}\tilde{\:}$, since $T'$ is trivial.  Then
$\phz$ is an isomorphism, so we have an isomorphism of algebras
\[\xymatrix{
{\Phi:\A(T)}\ar[r]^-{\A(\phz)} &
{\A(S(\Res(T))}\ar[r]^-{\eta_{\Res(T)}^{-1}} &
{\mathbb C[\Res(T)]}}\]
where $\eta_{\Res(T)}$ is the isomorphism defined in Lemma \ref{lem:adjacency}.
For $t\in T$, we have
\[\eta_{\Res(T)}(\Phi(\sigma_t))=\A(\phz)(\sigma_t)=\frac{n_t}{n_{\phz(t)}}\sigma_{\phz(t)}
=\sigma_{\phz(t)}=\sigma_{\{t\}\tilde{\:}}.\]
Since $\eta_{\Res(T)}(\{t\})=\sigma_{\{t\}\tilde{\:}}$,
$\Phi(\sigma_t)$ is the generator $\{t\}\in \mathbb C[\Res(T)]$.
From here, we immediately obtain
\[\Delta_{\Res(T)}\circ \Phi= (\Phi\otimes\Phi)\circ\Delta_{T},
\quad \epz_{T}=\epz_{\Res(T)}\circ \Phi,\quad\text{and}\quad
\Phi\circ S_{T}=S_{\Res(T)}\circ \Phi.\]
Since $\Phi$ is an isomorphism of algebras, it follows that $\A(T)$ is a cocommutative Hopf algebra
and $\Phi$ is an isomorphism of Hopf algebras.
\end{proof}

\begin{rem}\label{rem:repring}
For any algebra $A$, the set of isomorphism classes of finite dimensional
$A$-modules forms a semigroup,
with sum given by direct sum of modules.  If $V$ and $W$ are $A$-modules, then $V\otimes W$
is an $A\otimes A$-module, so one does not typically get a semiring structure on these isomorphism
classes.  However, when $A$ is a Hopf algebra (e.g. $A=\mathbb C[G]$ for a group $G$),
then one may pull back the action map (of $A\otimes A$ on $V\otimes W$)
along the comultiplication map to make $V\otimes W$ into an $A$-module.  Coassociativity
of the comultiplication implies that if $U$, $V$, and $W$ are $A$-modules,
then $U\otimes (V\otimes W)$ and $(U\otimes V)\otimes W$ are isomorphic $A$-modules.
If $A$ is cocommutative,
then the canonical isomorphism $V\otimes W\iso W\otimes V$ is an isomorphism of $A$-modules.
Moreover, the counit of $A$ determines a one-dimensional module, and counitality implies that the isomorphism
class of this module is a multiplicative identity.
Thus, the set of isomorphism classes of modules over a Hopf algebra $A$ forms a commutative, unital
semiring.  By taking formal differences
of elements in this semiring, one obtains a commutative unital ring.  For example, for a finite group $G$,
the formal differences
of isomorphism classes of $\mathbb C[G]$-modules form a commutative unital ring $R(G)$,
the representation ring of $G$.  Using Proposition \ref{prop:Hopf}, we can apply this procedure to a
thin scheme $T$; then, the representation ring $R(T)$ of $T$ is isomorphic to $R(\Res(T))$.
\end{rem}

Now suppose $A$ is a Hopf algebra, with comultiplication map $\Delta_A:A\to A\otimes A$,
counit $\epz_A:A\to \mathbb C$, multiplication $\nabla_A:A\otimes A\to A$, and unit
$\eta_A:\mathbb C\to A$.  A left $A$-comodule is a vector space $M$ equipped with a linear
coaction map $\rho_M:M\to A\otimes M$ which satisfies coassociativity and counitality.  More concretely, we require that
\begin{equation}\label{eq:coassociative}
(\id_{A}\otimes \rho_M)\circ \rho_M=(\Delta_A\otimes \id_M)\circ \rho_M,
\end{equation}
and for any $m\in M$,
\begin{equation}\label{eq:counital}
((\epz_A\otimes \id_M)\circ\rho_M)(m)=1\otimes m\in \mathbb C\otimes M.
\end{equation}

A left $A$-comodule algebra is an algebra, which is also a left $A$-comodule, such that the
coaction $\rho_M$ is an algebra homomorphism.  More concretely, letting $\nabla_M$
and $\eta_M$ denote the multiplication and unit of $M$, we require that
\begin{equation}\label{eq:algebra}
\rho_M\circ\nabla_M=(\nabla_A\otimes \nabla_M)\circ (\id_A\otimes \tau_{M,A}\otimes \id_M)\circ
(\rho_M\otimes \rho_M),
\end{equation}
and
\begin{equation}\label{eq:unital}
\rho_M\circ \eta_M=(\eta_A\otimes \eta_M)\circ \Delta_{\Cx},
\end{equation}
where $\tau_{M,A}:M\otimes A\to A\otimes M$ and
$\Delta_{\Cx}:\mathbb C\to \mathbb C\otimes \mathbb C$
are the canonical isomorphisms.
See \cite[Section 3.2]{ABE} for an equivalent formulation of this definition.

Now fix a morphism $\phi$ from a scheme $S$ to a thin scheme $T$ (so $\phi$ is admissible
by Lemma \ref{lem:to-thin}).  Then let $\rho_S:\A(S)\to \A(T)\otimes \A(S)$ and
$\epz_S:\A(S)\to \A(T)$ be defined on generators
$\sigma_p\in \A(S)$ by \[\rho_S(\sigma_p)=\sigma_{\phi(p)}\otimes \sigma_p,\quad
\epz_S(\sigma_p)=n_p\sigma_{\phi(p)}\]

\begin{prop}\label{prop:Hopfcom}
The function $\epz_S$ is an algebra homomorphism, and
the function $\rho_S$ endows $\A(S)$ with the structure of a left $\A(T)$-comodule algebra.
\end{prop}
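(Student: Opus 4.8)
The plan is to treat the two assertions separately, first disposing of $\epz_S$ via an observation special to the thin target, and then checking the comodule-algebra axioms for $\rho_S$, the only substantive one being multiplicativity.

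First I would observe that because $T$ is thin, every element of $T$ has valency $1$; in particular $n_{\phi(p)}=1$ for each $p\in S$, so Corollary \ref{cor:calculation} gives $n_p^\phi=n_p/n_{\phi(p)}=n_p$. Comparing with the formula in Corollary \ref{cor:Aphi}, this means $\epz_S(\sigma_p)=n_p\sigma_{\phi(p)}=n_p^\phi\sigma_{\phi(p)}=\A(\phi)(\sigma_p)$ on generators, so $\epz_S=\A(\phi)$. Since $\A(\phi)$ is an algebra homomorphism by Corollary \ref{cor:Aphi}, the first claim follows immediately.

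Next I would verify that $\rho_S$ is a coaction. For coassociativity \eqref{eq:coassociative}, I would evaluate both sides on a generator $\sigma_p$: the left-hand side gives $\sigma_{\phi(p)}\otimes\sigma_{\phi(p)}\otimes\sigma_p$, while the right-hand side is $\Delta_T(\sigma_{\phi(p)})\otimes\sigma_p$, and these agree because $\Delta_T(\sigma_t)=\sigma_t\otimes\sigma_t$ by Proposition \ref{prop:Hopf}. For counitality \eqref{eq:counital}, applying $\epz_T\otimes\id_{\A(S)}$ to $\rho_S(\sigma_p)=\sigma_{\phi(p)}\otimes\sigma_p$ yields $\epz_T(\sigma_{\phi(p)})\otimes\sigma_p=1\otimes\sigma_p$, again using Proposition \ref{prop:Hopf}.

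Finally I would show that $\rho_S$ is an algebra homomorphism into the tensor-product algebra $\A(T)\otimes\A(S)$, which is precisely the content of \eqref{eq:algebra} and \eqref{eq:unital}. Preservation of the unit is immediate since $\phi(1_X)=1_Y$ gives $\rho_S(\sigma_{1_X})=\sigma_{1_Y}\otimes\sigma_{1_X}$. For products, the key point---and the step I expect to be the main obstacle---is the following: because $\phi(p)$ and $\phi(q)$ are thin elements of the thin scheme $T$, their complex product $\phi(p)\phi(q)$ is a single thin element $t_0$, so $\sigma_{\phi(p)}\cdot\sigma_{\phi(q)}=\sigma_{t_0}$, whence $\rho_S(\sigma_p)\cdot\rho_S(\sigma_q)=\sigma_{t_0}\otimes(\sigma_p\cdot\sigma_q)=\sum_r a_{pq}^r\,\sigma_{t_0}\otimes\sigma_r$. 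On the other hand $\rho_S(\sigma_p\cdot\sigma_q)=\sum_r a_{pq}^r\,\sigma_{\phi(r)}\otimes\sigma_r$, so the two expressions coincide once I check that $\phi(r)=t_0$ for every $r$ with $a_{pq}^r>0$. This last fact I would obtain by noting that $a_{pq}^r>0$ produces a configuration $(x,y)\in p$, $(y,z)\in q$, $(x,z)\in r$ whose image under $\phi$ witnesses $\phi(r)\in\phi(p)\phi(q)=\{t_0\}$. Together with the coaction axioms already verified, this establishes that $\rho_S$ makes $\A(S)$ a left $\A(T)$-comodule algebra.
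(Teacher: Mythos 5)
Your proof is correct, but your treatment of the key step --- multiplicativity of $\rho_S$ --- takes a genuinely different route from the paper's. The paper does not compute with structure constants at all: it observes that $\mu_{T,S}\circ\rho_S$ agrees on generators with $\A(\delta_S^\phi)$, where $\delta_S^\phi=(\phi\times\id_S)\circ\Delta_S$ is the admissible morphism of Lemma \ref{lem:Delta'} (one checks $\A(\delta_S^\phi)(\sigma_p)=\frac{n_p}{n_{[\phi(p),p]}}\sigma_{[\phi(p),p]}=\sigma_{[\phi(p),p]}$ using $n_{[\phi(p),p]}=n_{\phi(p)}n_p=n_p$), and then concludes that $\rho_S=\mu_{T,S}^{-1}\circ\A(\delta_S^\phi)$ is a composite of algebra homomorphisms via Corollary \ref{cor:Aphi} and Proposition \ref{prop:prod}. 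You instead verify \eqref{eq:algebra} by hand: $\rho_S(\sigma_p)\cdot\rho_S(\sigma_q)=\sigma_{t_0}\otimes\sum_r a_{pq}^r\sigma_r$ where $\phi(p)\phi(q)=\{t_0\}$, and $\phi(r)=t_0$ for every $r\in pq$ because a configuration witnessing $a_{pq}^r>0$ pushes forward to one witnessing $a_{\phi(p)\phi(q)}^{\phi(r)}>0$. This is essentially Hanaki's original direct computation, and it is perfectly valid; its cost is that it bypasses the conceptual point the paper is explicitly making in the remark that follows, namely that multiplicativity of $\rho_S$ is a manifestation of the admissibility of $(\phi\times\id_S)\circ\Delta_S$ (and the failure of $\sigma_s\mapsto\sigma_s\otimes\sigma_s$ to be multiplicative is a manifestation of the non-admissibility of $\Delta_S$). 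One small point you should make explicit: you need $\sigma_{\phi(p)}\cdot\sigma_{\phi(q)}=\sigma_{t_0}$ with coefficient exactly $1$, i.e.\ $a_{\phi(p)\phi(q)}^{t_0}=1$ rather than merely positive; this follows since $a_{\phi(p)\phi(q)}^{t_0}\leq n_{\phi(p)}=1$ (or from $\sum_t a_{uv}^t n_t=n_u n_v=1$ for thin $u,v$). The remainder of your argument --- identifying $\epz_S$ with $\A(\phi)$ via $n_p^\phi=n_p$, and the generator-level checks of coassociativity and counitality --- coincides with the paper's.
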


\begin{proof}
First, by Corollary \ref{cor:Aphi}, $\A(\phi)$ is a homomorphism of algebras, and
$\A(\phi)(\sigma_p)=n_p^{\phi}\sigma_{\phi(p)}$.
By Corollary \ref{cor:calculation}, and since $\phi(p)$ is thin, we have
$n_p^\phi=\frac{n_p}{n_{\phi(p)}}=n_p$.  Therefore, $\epz_S=\A(\phi)$, which
is an algebra homomorphism.

Now, by Proposition \ref{prop:prod}, we have
\[\mu_{T,S}(\rho_S(\sigma_p))=\mu_{T,S}(\sigma_{\phi(p)}\otimes \sigma_p)=\sigma_{[\phi(p),p]}.\]
On the other hand, the function $\delta_S^\phi:=(\phi\times \id_S)\circ \Delta_S$ is admissible
by Lemma \ref{lem:Delta'}, so $\A(\delta_S^\phi)$ is an algebra homomorphism.  We have
\[\A(\delta_S^\phi)(\sigma_p)=n_p^{\delta_S^\phi}\sigma_{\delta_S^\phi(p)}
=\frac{n_p}{n_{[\phi(p),p]}}\sigma_{[\phi(p),p]}.\]
By Remark \ref{rem:product-structure}, $n_{[\phi(p),p]}=n_{\phi(p)}n_p=n_p$, since $\phi(p)$ is thin.
Thus, \[\A(\delta_S^\phi)(\sigma_p)=\sigma_{[\phi(p),p]}=\mu_{T,S}(\rho_S(\sigma_p)).\]
Now, $\mu_{T,S}$ is an isomorphism of algebras by Proposition \ref{prop:prod}, so
$\mu_{T,S}^{-1}\circ \A(\delta_S^\phi)=\rho_S$ is an algebra homomorphism.

Equations (\ref{eq:coassociative}) and (\ref{eq:counital}) are easy computations.  For $\sigma_p\in \A(S)$, we have
\begin{equation*}
\begin{split}
((\id_{\A(T)}\otimes \rho_S)\circ \rho_S)(\sigma_p)&=
(\id_{\A(T)}\otimes \rho_S)(\sigma_{\phi(p)}\otimes \sigma_p)\\
&=\sigma_{\phi(p)}\otimes \sigma_{\phi(p)}\otimes \sigma_p\\
&=(\Delta_T\otimes \id_{\A(S)})(\sigma_{\phi(p)}\otimes \sigma_p)\\
&=(\Delta_T\otimes \id_{\A(S)})(\rho_S(\sigma_p)).
\end{split}
\end{equation*}
Likewise, we have
\[((\epz_T\otimes \id_{\A(S)})\circ\rho_S)(\sigma_p)=
(\epz_T\otimes \id_{\A(S)})(\sigma_{\phi(p)}\otimes \sigma_p)
=1\otimes \sigma_p.\]
\end{proof}

\begin{rem}\label{rem:repmod}
If $A$ is a (complex) cocommutative Hopf algebra and $M$ is a left $A$-comodule algebra,
then the set of isomorphism classes
of finite dimensional $M$-modules forms a semigroup as in Remark \ref{rem:repring}.  Now,
if $V$ is an $A$-module and $W$ is an $M$-module, then $V\otimes W$ is an $A\otimes M$-module;
pulling back along the coaction $\rho_M:M\to A\otimes M$ makes $V\otimes W$ into an
$M$ module.  Thus, the semigroup of finite dimensional $M$-modules is a semimodule
over the semiring of finite dimensional $A$-modules; taking formal differences as before,
we obtain a module over the commutative ring described in Remark \ref{rem:repring}.
Thus, if $\phi$ is a morphism from a scheme $S$ to a thin scheme $T$, then
by Proposition \ref{prop:Hopfcom}, the formal differences of isomorphism classes
of finite dimensional $\A(S)$-modules, which we denote $R(S)$,
form a module over the representation ring $R(T)$ of $T$.  In particular, we may
take $T$ to be $S\dm O^{\vartheta}(S)$, and take $\phi$ to be the natural
quotient morphism.  Since $O^{\vartheta}(T)$ is trivial,
$T\dm O^{\vartheta}(T)$ is canonically isomorphic to $T$, so, by Definition \ref{def:residue},
$\Res(T)$ and $\Res(S)$ are canonically isomorphic.
Thus, we have defined an $R(\Res(S))$-module structure on $R(S)$.
\end{rem}

Hanaki \cite{Han2} proved that the product of characters of two representations
of a finite scheme $S$ is itself a character if one of the two characters contains the thin
residue $O^{\vartheta}(S)$ in its kernel, which is equivalent to the above remark.
As above, the key step is to prove that the map $\rho_S$ is a morphism of algebras, which
Hanaki does by direct computation (in \cite[Proposition 3.1]{Han2}),
in the primary case when $\phi$ is the quotient map from $S$ to $S\dm O^{\vartheta}(S)$.
Our proof shows the central role of admissible morphisms in this result.  That is, the essential
reason that $\rho_S$ is an algebra morphism is that the map $(\phi\times \id_S)\circ \Delta_S$
is an admissible morphism of schemes.  Likewise, the failure of the morphism
$\A(S)\to \A(S)\otimes \A(S)$ taking $\sigma_s$ to $\sigma_s\otimes \sigma_s$
to be an algebra homomorphism can be attributed to the lack of admissibility of the morphism $\Delta_S$.

\newpage

\end{document}